\newtheorem{theorem}{Theorem}[section]
\newtheorem{proposition}[theorem]{Proposition}
\newtheorem{corollaire}[theorem]{Corollary}
\newtheorem{lemme}[theorem]{Lemma}
\newtheorem{definition}[theorem]{Definition}
\theoremstyle{definition}
\newcommand{\comment}[1]{}
\numberwithin{equation}{section}
\newcommand{\epf}{ $\Box$\medskip}
\theoremstyle{definition}
\begin{document}
\title[J.M Tanoh Dje and B. Sehba]{Carleson embeddings and pointwise multipliers between Hardy-Orlicz spaces and Bergman-Orlicz spaces of the upper half-plane}
\author[J.M. Tanoh Dje]{Jean$-$Marcel Tanoh Dje}
\address{Unit\'e de Recherche et d'Expertise Num\'erique, Universit\'e Virtuelle de C\^ote d'Ivoire, Cocody II-Plateaux - 28 BP 536 ABIDJAN 28}
\email{{\tt tanoh.dje@uvci.edu.ci}}
\author{Beno\^it Florent Sehba}
\address{Department of Mathematics, University of Ghana,\\ P. O. Box LG 62 Legon, Accra, Ghana}
\email{bfsehba@ug.edu.gh}

\subjclass{}
\keywords{}

\date{}

\begin{abstract}
In this article, we give a general characterization of Carleson measures involving concave or convex growth functions. We use this characterization to establish continuous injections and also to characterize the set of pointwise multipliers between Hardy-Orlicz spaces and Bergman-Orlicz spaces.
\end{abstract}

\maketitle

\section{Introduction.}
Let $\mathbb{D}$ be the unit disc of $\mathbb{C}$. For $\alpha>-1$, and $0<p<\infty$, the Bergman space $A_\alpha^p(\mathbb{D})$ consists of all holomorphic functions $f$ on $\mathbb{D}$ such that \begin{equation}\label{eq:defbergdisc}
\|f\|_{p,\alpha}^p:=\int_{\mathbb D}\vert f(z)\vert^p(1-|z|^2)^\alpha d\nu(z)<\infty.
\end{equation}
Here, $d\nu(z)$ is the normalized area measure on $\mathbb D$.
\medskip

When $\alpha\longrightarrow -1$, the corresponding space $A_{-1}^p(\mathbb{D})$ is the Hardy space $H^p(\mathbb D)$ which consists of all holomorphic functions $f$ on $\mathbb D$ such that 
\begin{equation}\label{eq:defhardydisc}
\|f\|_p^p:=\|f\|_{p,-1}^p:=\sup_{0\le r<1}\int_0^{2\pi}\vert f(re^{i\theta})\vert^pd\theta<\infty.
\end{equation}
\medskip

One of the most studied questions on holomorphic function spaces and their operators is the notion of Carleson meausures for these spaces. In the unit disc, this is about characterizing all positive measures $\mu$ on $\mathbb D$ such that for some constant $C>0$, and for any $f\in A_\alpha^p(\mathbb{D})$, $\alpha\geq -1$,
\begin{equation}\label{eq:carlembeddisc}
\int_{\mathbb D}\vert f(z)\vert^qd\mu(z)\leq C\|f\|_{p,\alpha}^q.
\end{equation}
This problem was first solved by L. Carleson in \cite{Carleson1,Carleson2} for Hardy spaces in the case $p=q$. Extension of this result for $p<q$ was obtained by P. Duren in \cite{Pduren2} The case with loss $p<>q$ was solved by I. V. Videnskii in \cite{Videnskii}. The Corresponding results for Bergman spaces of the unit disc and the unit ball were obtained by W. Hastings and D. Luecking, J. A. Cima and W. Wogen in \cite{CW,Hastings,Luecking1,Luecking2,Luecking3, Luecking4}. For other contributions, we also refer the reader to the following \cite{hormander,Power,Ueki}.  
\medskip

Our interest in this paper is for the inequality (\ref{eq:carlembeddisc}) in the case where the power functions $t^q$ and $t^p$ are replaced by some continuous increasing and onto functions on $[0,\infty)$, $\Phi_2$ and $\Phi_1$ respectively. In the unit ball of $\mathbb C^n$, this problem was solved in the case where $t\mapsto \frac{\Phi_2(t)}{\Phi_1(t)}$ is nondecreasing for Hardy and Bergman spaces in the following and the references therein \cite{Charpentier,CharpentierSehba,Sehba2}. The  case where $t\mapsto \frac{\Phi_2(t)}{\Phi_1(t)}$ is nonincreasing was handled in \cite{Sehba1} for the Bergman-Orlicz spaces.
\medskip 

In this paper, our setting is the upper-half plane $\mathbb C_+$ and we still consider problem (\ref{eq:carlembeddisc}) for growth functions $\Phi_1$ and $\Phi_2$. In \cite{djesehb}, we considered this question for the case where $t\mapsto \frac{\Phi_2(t)}{\Phi_1(t)}$ is nondecreasing both functions being convex growth function. We are presenting here a more general result that encompasses the case where both $\Phi_1$ and $\Phi_2$ are concave, still with $t\mapsto \frac{\Phi_2(t)}{\Phi_1(t)}$ nondecreasing. We note that even in the case of power functions, the study of Carleson measures for Bergman spaces of the upper-half plane with exponent in $(0,1]$ seems to have never been considered before. Our work will fix this gap beyond power functions as we are dealing here with growth functions that generalize them.
\section{Statement of main results.}

In this paper, a continuous and nondecreasing function $\Phi$ from $\mathbb{R}_{+}$ onto itself is called a growth function. Observe that if $\Phi$ is a growth function, then $\Phi(0)=0$ and $\lim_{t \to +\infty}\Phi(t)=+\infty$. If  $\Phi(t)> 0$ for all  $t> 0$ then $\Phi$ is a homeomorphism of $\mathbb{R}_{+}$ onto $\mathbb{R}_{+}$.

Let  $p>0$ be a real and $\Phi$  a growth function. We say that  $\Phi$ is of upper-type (resp. lower-type) $p>0$ if there exists a constant
$C_{p}> 0$ such that for all $t\geq 1$ (resp. $0< t\leq 1$), 
\begin{equation}\label{eq:sui8n}
\Phi(st)\leq C_{p}t^{p}\Phi(s),~~\forall~s>0.\end{equation}
We denote by $\mathscr{U}^{p}$ (resp. $\mathscr{L}_{p}$) the set of all growth functions of upper-type $p \geq 1$ (resp. lower-type $0< p\leq 1$) such that the function 
$t\mapsto \frac{\Phi(t)}{t}$ is non decreasing (resp. non-increasing) on $\mathbb{R}_{+}^{*}= \mathbb{R}_{+}\backslash\{0\}$. We put
$   \mathscr{U}:=\bigcup_{p\geq 1}\mathscr{U}^{p}$ (resp. $\mathscr{L}:=\bigcup_{0< p\leq 1}\mathscr{L}_{p}$).\\
Any element belongs $\mathscr{L} \cup\mathscr{U}$ is a homeomorphism of $\mathbb{R}_{+}$ onto $\mathbb{R}_{+}$. 

We say that two growth functions $\Phi_{1}$ and $\Phi_{2}$ are equivalent, if there exists a constant $c > 0$ such that

\begin{equation}\label{eq:equivalent}
c^{-1}\Phi_{1}(c^{-1}t) \leq \Phi_{2}(t)\leq c\Phi_{1}(ct), ~~ \forall~ t > 0.\end{equation} 

We will assume in the sequel that  any element of $\mathscr{U}$ (resp. $\mathscr{L}$) belongs to $\mathscr{C}^{1}(\mathbb{R}_{+})$ and is  
convex (resp. concave). Moreover, 
$$  \Phi'(t)\approx \frac{\Phi(t)}{t},~~ \forall~ t > 0,    $$ 
(see for example \cite{bonamisehba, jmtafeuseh, djesehb, djesehaqb, sehbaedgc}).  

 Let $I$ be an interval of nonzero length. The Carleson square associated with $I$, $Q_{I}$ is the subset of $\mathbb{C}_{+}$ defined by

\begin{equation}\label{eq:ualp5rsleson}
Q_{I}:=\left\{x+iy\in \mathbb{C}_{+} : x\in I ~~\text{et}~~ 0<y<|I| \right\}.\end{equation}

\begin{definition}
Let  $s > 0$ be a real, $\Phi$ a growth function and  $\mu$ a positive Borel measure on $\mathbb{C}_{+}$. We say that $\mu$ is a $(s,\Phi)-$Carleson measure if there is a constant $C > 0$ such that for any interval $I$ of nonzero length
\begin{equation}\label{eq:ual65iaq5rsleson}
\mu(Q_{I}) \leq \dfrac{C}{\Phi\left(\frac{1}{|I|^{s}}\right)} .\end{equation}
\end{definition}

\begin{itemize}
\item[\textbullet] When $s=1$, we say that $\mu$ is a $\Phi-$Carleson measure.
\item[\textbullet] When $s=2+\alpha$, with $\alpha > -1$, we say that $\mu$ is a $(\alpha,\Phi)-$Carleson measure.
\end{itemize}

Let  $\alpha > -1$ be a real and $\Phi$ a  growth function. 
\begin{itemize}
\item The Hardy$-$Orlicz space on  $\mathbb{C_{+}}$,   $H^{\Phi}(\mathbb{C_{+}})$ is the set of analytic functions on $\mathbb{C_{+}}$ which satisfy 
$$   \|F\|_{H^{\Phi}}^{lux}:=\sup_{y> 0}\inf\left\{\lambda>0 : \int_{\mathbb{R}}\Phi\left(\dfrac{| F(x+iy)|}{\lambda}\right)dx \leq 1  \right\} < \infty.   $$  
\item The Bergman$-$Orlicz space on  $\mathbb{C_{+}}$, $A_{\alpha}^{\Phi}(\mathbb{C_{+}})$ is the set of analytic functions on $\mathbb{C_{+}}$ which satisfy 
$$ \|F\|_{A_{\alpha}^{\Phi}}^{lux}:=\inf\left\{\lambda>0 : \int_{\mathbb{C_{+}}}\Phi\left(\dfrac{| F(x+iy)|}{\lambda}\right)dV_{\alpha}(x+iy) \leq 1  \right\}< \infty, $$
where $dV_{\alpha}(x+iy):=y^{\alpha}dxdy$.
\end{itemize}
If $\Phi$ is  convex and  $\Phi(t)> 0$ for all  $t> 0$ then  $\left(H^{\Phi}(\mathbb{C_{+}}), \|.\|_{H^{\Phi}}^{lux}\right)$ and  $(A_{\alpha}^{\Phi}(\mathbb{C_{+}}),  \|.\|_{A_{\alpha}^{\Phi}}^{lux})$  are Banach spaces (see. \cite{djesehb, Jan, Jan1}).  The spaces  $H^{\Phi}(\mathbb{C_{+}})$  and $A_{\alpha}^{\Phi}(\mathbb{C_{+}})$  generalizes respectively the Hardy space $H^{p}(\mathbb{C_{+}})$ and the Bergman space $A_{\alpha}^{p}(\mathbb{C_{+}})$ for $0< p < \infty$.
\medskip

Our first main result is the following which extend  \cite[Theorem 2.2]{djesehb} to Hardy-Orlicz spaces defined with concave growth functions.
\begin{theorem}\label{pro:main127}
Let $\Phi_{1}, \Phi_{2} \in \mathscr{L} \cup\mathscr{U}$  and  $\mu$ a positive Borel measure on $\mathbb{C}_{+}$. 
 If the function   $t\mapsto\frac{\Phi_{2}(t)}{\Phi_{1}(t)}$ is non-decreasing on  $\mathbb{R_{+}^{*}}$ then the following assertions are equivalent.
\begin{itemize}
\item[(i)]  $\mu$ is a $\Phi_{2}\circ\Phi^{-1}_{1}-$Carleson measure.
\item[(ii)] There exist some constants $\rho \in \{1; a_{\Phi_{1}}\} $ and $C_{1}>0$  such that for all $z=x+iy \in \mathbb{C_{+}}$
\begin{equation}\label{eq:ualphispleson1}
 \int_{\mathbb{C}_{+}}\Phi_{2}\left(\Phi^{-1}_{1}\left(\frac{1}{y}\right) \dfrac{y^{2/\rho}}{ |\omega-\overline{z}|^{2/\rho}}\right) d\mu(\omega) \leq C_{1}  .\end{equation}
\item[(iii)] There exists a constant $C_{2} > 0$ such that for all $0\not \equiv F\in H^{\Phi_{1}}(\mathbb{C_{+}})$, 
\begin{equation}\label{eq:iberginjection}
\int_{\mathbb{C_{+}}}\Phi_{2}\left( \frac{|F(z)|}{\|F\|_{H^{\Phi_{1}}}^{lux}} \right)d\mu(z) \leq C_{2} .\end{equation}	
\item[(iv)]  There exists a constant $C_{3} > 0$ such that for all $ F\in H^{\Phi_{1}}(\mathbb{C_{+}})$
 \begin{equation}\label{eq:iberginjectio2}
\sup_{\lambda> 0}\Phi_{2}(\lambda)\mu\left(\{ z\in \mathbb{C_{+}} : |F(z)|>\lambda \|F\|_{H^{\Phi_{1}}}^{lux}\} \right) \leq C_{3} .\end{equation}
\end{itemize}
\end{theorem}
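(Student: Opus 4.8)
The plan is to establish all the equivalences through the cycle $(iii)\Rightarrow(iv)\Rightarrow(i)\Rightarrow(ii)\Rightarrow(iii)$. The implication $(iii)\Rightarrow(iv)$ is immediate: since $\Phi_2$ is nondecreasing, on the set $\{z:|F(z)|>\lambda\|F\|_{H^{\Phi_1}}^{lux}\}$ one has $\Phi_2(\lambda)\leq \Phi_2\big(|F(z)|/\|F\|_{H^{\Phi_1}}^{lux}\big)$, so multiplying by the $\mu$-measure of that set and bounding by the full integral in \eqref{eq:iberginjection} yields \eqref{eq:iberginjectio2} with $C_3=C_2$.

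For $(iv)\Rightarrow(i)$ I would test \eqref{eq:iberginjectio2} on a family of functions adapted to Carleson squares. Given an interval $I$ of length $h=|I|$ centered at $x_I$, set $z_I=x_I+ih$ and take a test function of the form
$$F_I(\omega)=\Phi_1^{-1}\!\left(\tfrac{1}{h}\right)\left(\frac{h}{\omega-\overline{z_I}}\right)^{N},$$
with $N$ large enough (depending on the type of $\Phi_1$) to guarantee $\|F_I\|_{H^{\Phi_1}}^{lux}\lesssim 1$; this normalization is exactly where the upper/lower-type inequalities \eqref{eq:sui8n} and the growth condition $\Phi'(t)\approx\Phi(t)/t$ enter, to control $\sup_{v>0}\int_{\mathbb R}\Phi_1\big(|F_I(u+iv)|/\lambda\big)\,du$. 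On $Q_I$ one has $|\omega-\overline{z_I}|\approx h$, hence $|F_I|\gtrsim\Phi_1^{-1}(1/h)$ there. Choosing $\lambda\approx\Phi_1^{-1}(1/h)$ in \eqref{eq:iberginjectio2} and using $Q_I\subseteq\{|F_I|>\lambda\|F_I\|_{H^{\Phi_1}}^{lux}\}$ then gives $\Phi_2\big(\Phi_1^{-1}(1/h)\big)\,\mu(Q_I)\lesssim C_3$, which is precisely the $\Phi_2\circ\Phi_1^{-1}$-Carleson condition \eqref{eq:ual65iaq5rsleson}.

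To prove $(i)\Rightarrow(ii)$, I fix $z=x+iy$ and decompose $\mathbb C_+$ along the dyadic family of Carleson squares $Q_{I_k}$, where $I_k=2^kI_0$ and $I_0$ is the interval centered at $x$ with $|I_0|=2y$. On the annulus $Q_{I_{k+1}}\setminus Q_{I_k}$ one has $|\omega-\overline z|\approx 2^k y$, so the integrand in \eqref{eq:ualphispleson1} is comparable to $\Phi_2\big(\Phi_1^{-1}(1/y)\,2^{-2k/\rho}\big)$, while $(i)$ gives $\mu(Q_{I_{k+1}})\lesssim 1/(\Phi_2\circ\Phi_1^{-1})\big(1/(2^{k+1}y)\big)$. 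Summing over $k\geq 0$ reduces \eqref{eq:ualphispleson1} to a geometric-type series whose convergence follows from the upper/lower-type bounds on $\Phi_2$ and $\Phi_1^{-1}$; the exponent $2/\rho$ is tuned precisely so that each term decays geometrically (a power-function check shows one needs $\rho<2p_1$, which holds with $\rho=1$ in the convex case and $\rho=a_{\Phi_1}$ in the concave case).

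The remaining implication $(ii)\Rightarrow(iii)$ is the technical core and the step I expect to be the main obstacle. The idea is to control $|F(\omega)|$ pointwise by the Orlicz norm through a sub-mean-value estimate against a Cauchy/Poisson-type kernel. For convex $\Phi_1\in\mathscr U$ (the case $\rho=1$) the function $\Phi_1(|F|)$ is subharmonic, and averaging over a box around $\omega$ yields $\Phi_1\big(|F(\omega)|/\|F\|_{H^{\Phi_1}}^{lux}\big)\lesssim 1/\mathrm{Im}\,\omega$; inserting the resulting estimate into $\int\Phi_2\big(|F|/\|F\|\big)\,d\mu$ and matching it against \eqref{eq:ualphispleson1} via Fubini closes the loop. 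The genuinely new difficulty, and the reason the parameter $\rho=a_{\Phi_1}$ is introduced, is the concave case $\Phi_1\in\mathscr L$, where $\Phi_1(|F|)$ is no longer subharmonic: there I would instead exploit that $|F|^{a_{\Phi_1}}$ is subharmonic, run the sub-mean-value argument on this power, and then transfer the estimate back to $\Phi_1$ using the lower-type inequality, the factor $\big(y/|\omega-\overline z|\big)^{2/\rho}$ in \eqref{eq:ualphispleson1} being exactly the kernel power matching this subharmonic exponent. Carrying the constants uniformly through both regimes, and checking that the ratio hypothesis on $\Phi_2/\Phi_1$ makes $\Phi_2\circ\Phi_1^{-1}$ an admissible growth function, is where I anticipate the most care will be required.
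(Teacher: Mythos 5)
Your cycle is sound for three of the four arrows, and those three essentially coincide with the paper's arguments: $(iii)\Rightarrow(iv)$ is Chebyshev; $(iv)\Rightarrow(i)$ is obtained by testing on the normalized kernels $\Phi_1^{-1}(1/(\pi y_0))\,y_0^{2/\rho}(\omega-\overline{z_0})^{-2/\rho}$ (the paper's Proposition \ref{pro:main10aqa9} supplies exactly the norm control you defer to ``$N$ large enough''); and $(i)\Leftrightarrow(ii)$ is the annular decomposition $E_j=Q_{I_j}\setminus Q_{I_{j-1}}$ with a geometrically convergent sum, which is the paper's Theorem \ref{pro:main14paq1}.

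The genuine gap is the implication $(ii)\Rightarrow(iii)$, which you correctly identify as the core but for which the mechanism you propose does not work. A sub-mean-value estimate only yields the pointwise bound $|F(\omega)|\lesssim\Phi_1^{-1}(C/\mathrm{Im}\,\omega)\,\|F\|_{H^{\Phi_1}}^{lux}$, and integrating $\Phi_2$ of that bound against $d\mu$ is not controlled by \eqref{eq:ualphispleson1}: the testing condition carries the decaying factor $(y/|\omega-\overline z|)^{2/\rho}$ for one fixed $z$ at a time, and there is no Fubini pairing that converts it into a bound on $\int_{\mathbb{C}_+}\Phi_2\left(|F|/\|F\|\right)d\mu$ for an arbitrary $F$. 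The natural Fubini attempt --- dominate $|F(\omega)|^{\rho}$ by the Poisson integral of $|f|^{\rho}$ for the boundary function $f$, apply Jensen with the convex function $\Phi_2(t^{1/\rho})$, and swap the order of integration --- collapses the estimate onto $\int_{\mathbb R}\Phi_2\left(|f(t)|/\|F\|_{H^{\Phi_1}}^{lux}\right)dt$, which is generically infinite because $f$ lies in $L^{\Phi_1}(\mathbb R)$ and not in $L^{\Phi_2}(\mathbb R)$ (already for $\Phi_1(t)=t$, $\Phi_2(t)=t^2$). Any correct proof has to pass through a level-set estimate, which is why the paper does not prove $(ii)\Rightarrow(iii)$ directly but instead goes $(ii)\Rightarrow(i)\Rightarrow(iii)$: it uses (a) the boundary representation $\log|F|\leq U_{\log|f|}$ of Theorem \ref{pro:mainfaaqaqaqq5}, (b) Jensen to dominate $|F|$ by $\left(\mathcal{M}_{HL}(|f|^{a_{\Phi_1}/2})\right)^{2/a_{\Phi_1}}$, (c) the weak-type bound $|E_\lambda|\lesssim 1/\Phi_1(\lambda)$ of Proposition \ref{pro:main9aaq9} combined with the Carleson condition through Lemma \ref{pro:main132} to obtain $\mu(\{|F|>\lambda\|F\|\})\lesssim(\Phi_1'(\lambda)/\Phi_2'(\lambda))\,|E_\lambda|$, and (d) a layer-cake integration in $\lambda$ using $\Phi_1(\lambda)/\Phi_2(\lambda)\approx\Phi_1'(\lambda)/\Phi_2'(\lambda)$. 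This covering and distribution-function machinery is what your sketch omits, and it cannot be replaced by a pointwise estimate plus Fubini.
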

As consequence, we have the following.
\begin{corollaire}\label{pro:main149}
Let $\alpha > -1$ and $\Phi_{1},\Phi_{2} \in \mathscr{L} \cup \mathscr{U}$  such that $t\mapsto\frac{\Phi_{2}(t)}{\Phi_{1}(t)}$ is non-decreasing on  $\mathbb{R_{+}^{*}}$. The Hardy-Orlicz space $H^{\Phi_{1}}(\mathbb{C_{+}})$ embeds continuously into the Bergman-Orlicz space $A_{\alpha}^{\Phi_{2}}(\mathbb{C_{+}})$ if and only if there exists a constant $C > 0$ such that 
 for all $t > 0$, 
\begin{equation}\label{eq:ibejection122}
  \Phi_{1}^{-1}(t) \leq \Phi_{2}^{-1}(Ct^{2+\alpha})
.\end{equation}
\end{corollaire}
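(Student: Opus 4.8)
The plan is to read the continuous embedding $H^{\Phi_{1}}(\mathbb{C}_{+})\hookrightarrow A_{\alpha}^{\Phi_{2}}(\mathbb{C}_{+})$ as a Carleson embedding for the single, explicit measure $d\mu=dV_{\alpha}$, so that Theorem \ref{pro:main127} applies verbatim (its hypotheses $\Phi_{1},\Phi_{2}\in\mathscr{L}\cup\mathscr{U}$ and $t\mapsto\Phi_{2}(t)/\Phi_{1}(t)$ nondecreasing are precisely those of the corollary). The whole argument then reduces to a single computation of $V_{\alpha}$ on Carleson squares.

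First I would reconcile the two formulations: the embedding is phrased with the Luxemburg norm of $A_{\alpha}^{\Phi_{2}}$, whereas assertion (iii) of Theorem \ref{pro:main127} is a modular inequality. Since the Luxemburg norms are absolutely homogeneous, it suffices to compare, over all $F$ with $\|F\|_{H^{\Phi_{1}}}^{lux}=1$, the quantity $\|F\|_{A_{\alpha}^{\Phi_{2}}}^{lux}$ with the modular $\int_{\mathbb{C}_{+}}\Phi_{2}(|F|)\,dV_{\alpha}$. I expect this bookkeeping to be the main (though routine) obstacle, caused by the non-homogeneity of the Orlicz modular; it is resolved by the type conditions. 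If the embedding holds with constant $C'\ge 1$, then $\int_{\mathbb{C}_{+}}\Phi_{2}(|F|/C')\,dV_{\alpha}\le 1$, and one absorbs $C'$ using $\Phi_{2}(C's)\lesssim (C')^{q}\Phi_{2}(s)$ when $\Phi_{2}\in\mathscr{U}^{q}$ and $\Phi_{2}(C's)\le C'\Phi_{2}(s)$ when $\Phi_{2}\in\mathscr{L}$ is concave; conversely, from (iii) one shrinks the argument via $\Phi_{2}(s/\lambda)\lesssim\lambda^{-p}\Phi_{2}(s)$ for $\lambda\ge 1$ to force the modular below $1$ and bound $\|F\|_{A_{\alpha}^{\Phi_{2}}}^{lux}$. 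Thus the continuous embedding is equivalent to assertion (iii) of Theorem \ref{pro:main127} for $\mu=V_{\alpha}$.

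Next, by the equivalence (i)$\Leftrightarrow$(iii) of Theorem \ref{pro:main127}, this is equivalent to $V_{\alpha}$ being a $\Phi_{2}\circ\Phi_{1}^{-1}$-Carleson measure, that is, the existence of $C>0$ with $V_{\alpha}(Q_{I})\le C\big/(\Phi_{2}\circ\Phi_{1}^{-1})(1/|I|)$ for every interval $I$. Here I would compute directly
$$
V_{\alpha}(Q_{I})=\int_{I}\int_{0}^{|I|}y^{\alpha}\,dy\,dx=\frac{|I|^{\alpha+2}}{\alpha+1}.
$$
Substituting, the Carleson condition becomes $\Phi_{2}\big(\Phi_{1}^{-1}(1/|I|)\big)\le C(\alpha+1)\,|I|^{-(\alpha+2)}$.

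Finally, setting $t=1/|I|$, which ranges over all of $(0,\infty)$ as $|I|$ does, the condition reads $\Phi_{2}(\Phi_{1}^{-1}(t))\le C(\alpha+1)\,t^{\,2+\alpha}$ for all $t>0$. Since every element of $\mathscr{L}\cup\mathscr{U}$ is a homeomorphism of $\mathbb{R}_{+}$, the inverse $\Phi_{2}^{-1}$ is an increasing bijection, and applying it to both sides turns this into $\Phi_{1}^{-1}(t)\le\Phi_{2}^{-1}\big(C(\alpha+1)\,t^{\,2+\alpha}\big)$; renaming the constant gives exactly \eqref{eq:ibejection122}. As each step is an equivalence, this establishes both directions of the corollary.
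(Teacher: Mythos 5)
Your proposal is correct and follows essentially the same route as the paper: the paper's proof simply invokes Theorem \ref{pro:main127} together with Proposition \ref{pro:main14a9} for $s=1$, and your direct computation of $V_{\alpha}(Q_{I})=\frac{1}{1+\alpha}|I|^{2+\alpha}$ followed by the substitution $t=1/|I|$ is precisely the content of that proposition. The only addition is your explicit handling of the passage between the modular inequality in (iii) and the Luxemburg-norm embedding, a routine point the paper leaves implicit.
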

\medskip

Our second main result generalizes \cite[Theorem 2.4]{djesehb}.
\begin{theorem}\label{pro:main150}
Let  $\alpha > -1$,  $\Phi_{1}, \Phi_{2} \in \mathscr{L} \cup\mathscr{U}$  and  $\mu$ a positive Borel measure on $\mathbb{C}_{+}$. If the function   $t\mapsto\frac{\Phi_{2}(t)}{\Phi_{1}(t)}$ is non-decreasing on  $\mathbb{R_{+}^{*}}$ then the following assertions are equivalent.
\begin{itemize}
\item[(i)]  $\mu$ is a $(\alpha,\Phi_{2}\circ\Phi^{-1}_{1})-$Carleson measure.
\item[(ii)] There exist some constants $\rho \in \{1; a_{\Phi_{1}}\} $ and $C_{1}>0$  such that for all $z=x+iy \in \mathbb{C_{+}}$
\begin{equation}\label{eq:ualphispleson2}
\int_{\mathbb{C}_{+}}\Phi_{2}\left(\Phi^{-1}_{1}\left(\frac{1}{y^{2+\alpha}}\right) \dfrac{y^{(4+2\alpha)/\rho}}{ |\omega-\overline{z}|^{(4+2\alpha)/\rho}}\right) d\mu(\omega) \leq C_{1}  .\end{equation}
\item[(iii)] There exists a constant $C_{2} > 0$ such that for all $0\not\equiv F\in A_{\alpha}^{\Phi_{1}}(\mathbb{C_{+}})$, 
\begin{equation}\label{eq:iberginjectiber}
\int_{\mathbb{C_{+}}}\Phi_{2}\left( \frac{|F(z)|}{\|F\|_{A_{\alpha}^{\Phi_{1}}}^{lux}} \right)d\mu(z) \leq C_{2} .\end{equation}	
\item[(iv)]  There exists a constant $C_{3} > 0$ such that for all $F\in A_{\alpha}^{\Phi_{1}}(\mathbb{C_{+}})$  
 \begin{equation}\label{eq:ibeectio2}
\sup_{\lambda> 0}\Phi_{2}(\lambda)\mu\left(\{ z\in \mathbb{C_{+}} : |F(z)|>\lambda \|F\|_{A_{\alpha}^{\Phi_{1}}}^{lux}\} \right) \leq C_{3} .\end{equation}
\end{itemize}
\end{theorem}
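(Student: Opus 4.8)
The plan is to prove the four assertions equivalent by establishing $(i)\Leftrightarrow(ii)$, the substantive embedding $(i)\Rightarrow(iii)$, the elementary $(iii)\Rightarrow(iv)$, and the extremal-function implication $(iv)\Rightarrow(i)$, thereby closing the loop. The whole argument runs parallel to that of Theorem \ref{pro:main127}, with the length $|I|$ of a boundary interval systematically replaced by the weighted volume $V_{\alpha}(Q_{I})=\int_{0}^{|I|}\!\!\int_{I}y^{\alpha}\,dx\,dy\approx|I|^{2+\alpha}$; this is precisely why the exponent $s=1$ of Theorem \ref{pro:main127} is promoted to $s=2+\alpha$, and why the $(\alpha,\Phi_{2}\circ\Phi_{1}^{-1})$-Carleson condition is the right transcription here. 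The genuinely new point beyond \cite{djesehb} is that $\Phi_{1},\Phi_{2}$ are now allowed to be concave, which forces the auxiliary exponent $\rho\in\{1;a_{\Phi_{1}}\}$ throughout.

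For $(i)\Leftrightarrow(ii)$ I would fix $z=x+iy$ and use that $|\omega-\overline{z}|>y$ for every $\omega\in\mathbb{C}_{+}$, so that $\mathbb{C}_{+}$ splits into the dyadic coronas $R_{k}=\{\omega:2^{k}y\le|\omega-\overline{z}|<2^{k+1}y\}$, each contained in a Carleson square $Q_{J_{k}}$ with $|J_{k}|\approx 2^{k}y$. On $R_{k}$ the argument of $\Phi_{2}$ in (ii) is comparable to $\Phi_{1}^{-1}(1/y^{2+\alpha})\,2^{-k(4+2\alpha)/\rho}$, so applying (i) to each $Q_{J_{k}}$ and summing over $k\ge 0$ reduces (ii) to the convergence of a geometric-type series controlled by the upper/lower-type exponents of $\Phi_{1},\Phi_{2}$; the choice $\rho=1$ for $\Phi_{1}\in\mathscr{U}$ and $\rho=a_{\Phi_{1}}$ for $\Phi_{1}\in\mathscr{L}$ is exactly what makes this series converge in both regimes. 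Conversely, for the top point $z_{I}=c_{I}+i|I|$ of $Q_{I}$ one has $|\omega-\overline{z_{I}}|\approx|I|$ on $Q_{I}$, so restricting the integral in (ii) to $Q_{I}$ gives $(\Phi_{2}\circ\Phi_{1}^{-1})(1/|I|^{2+\alpha})\,\mu(Q_{I})\lesssim C_{1}$, which is (i).

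The analytic heart is $(i)\Rightarrow(iii)$. I would fix a lattice $\{B_{j}\}$ of Bergman balls covering $\mathbb{C}_{+}$ with dilates $\widehat{B_{j}}$ of bounded overlap, and for $F$ with $\|F\|_{A_{\alpha}^{\Phi_{1}}}^{lux}\le 1$ write $\int\Phi_{2}(|F|)\,d\mu=\sum_{j}\int_{B_{j}}\Phi_{2}(|F|)\,d\mu\le\sum_{j}\mu(B_{j})\,\Phi_{2}\big(\sup_{B_{j}}|F|\big)$. Subharmonicity then gives $\sup_{B_{j}}|F|\lesssim\Phi_{1}^{-1}(a_{j})$ with $a_{j}=V_{\alpha}(\widehat{B_{j}})^{-1}\int_{\widehat{B_{j}}}\Phi_{1}(|F|)\,dV_{\alpha}$, where in the convex case one uses that $\Phi_{1}(|F|)$ is subharmonic and in the concave case one replaces it by the subharmonic function $|F|^{\rho}$ with $\rho=a_{\Phi_{1}}$, transferring the estimate via the lower-type inequality and the monotonicity of $t\mapsto\Phi_{1}(t)/t$. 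Writing $\Psi:=\Phi_{2}\circ\Phi_{1}^{-1}$, the Carleson hypothesis (i) gives $\mu(B_{j})\lesssim 1/\Psi(1/V_{\alpha}(B_{j}))$, and the decisive structural fact is that the monotonicity of $t\mapsto\Phi_{2}(t)/\Phi_{1}(t)$ is equivalent to the monotonicity of $t\mapsto\Psi(t)/t$; setting $\lambda_{j}=a_{j}V_{\alpha}(B_{j})$ (so that $\sum_{j}\lambda_{j}\lesssim\int\Phi_{1}(|F|)\,dV_{\alpha}\le 1$ by bounded overlap) this yields $\mu(B_{j})\Psi(a_{j})\lesssim\Psi(\lambda_{j}/V_{\alpha}(B_{j}))/\Psi(1/V_{\alpha}(B_{j}))\le\lambda_{j}$, and summing gives (iii). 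The remaining implications are routine: $(iii)\Rightarrow(iv)$ follows from the monotonicity of $\Phi_{2}$ and Chebyshev's inequality, while $(iv)\Rightarrow(i)$ is obtained by testing (iv) on the normalized kernels $F_{I}(z)=\Phi_{1}^{-1}(1/|I|^{2+\alpha})\,(i|I|/(z-\overline{z_{I}}))^{(2+\alpha)/\rho}$, which satisfy $\|F_{I}\|_{A_{\alpha}^{\Phi_{1}}}^{lux}\lesssim 1$ and $|F_{I}|\gtrsim\Phi_{1}^{-1}(1/|I|^{2+\alpha})$ on $Q_{I}$, so that choosing $\lambda\approx\Phi_{1}^{-1}(1/|I|^{2+\alpha})$ recovers the Carleson estimate.

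I expect the main obstacle to be the concave regime $\Phi_{1},\Phi_{2}\in\mathscr{L}$, which is what pushes the result past \cite{djesehb}. There the natural subharmonic surrogate $\Phi_{1}(|F|)$ is no longer available, so the sub-mean-value step in $(i)\Rightarrow(iii)$ must be run through $|F|^{a_{\Phi_{1}}}$ and then converted back to a statement about $\Phi_{1}$ using the lower-type control $\Phi_{1}(st)\le C\,t^{a_{\Phi_{1}}}\Phi_{1}(s)$ together with $t\mapsto\Phi_{1}(t)/t$ nonincreasing; carrying the single parameter $\rho\in\{1;a_{\Phi_{1}}\}$ consistently through the corona sum, the pointwise estimate, and the test-function norm computation is the delicate bookkeeping on which the argument hinges.
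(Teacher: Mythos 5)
Your overall architecture coincides with the paper's: the equivalence of (i) and (ii) is the corona decomposition of Theorem \ref{pro:main14paq1} specialized to $s=2+\alpha$ (your argument for it is the same as the paper's), (iii)$\Rightarrow$(iv) is Chebyshev, and (iv)$\Rightarrow$(i) is a test-function argument. Where you genuinely depart from the paper is in (i)$\Rightarrow$(iii). The paper dominates $|F|$ pointwise by $\bigl(\mathcal{M}_{V_{\alpha}}^{\mathcal{D}^{\beta}}(|F|^{a_{\Phi_{1}}/2})\bigr)^{2/a_{\Phi_{1}}}$ (Proposition \ref{pro:main1aq18}), converts the Carleson condition into a distributional inequality on the level sets of this maximal function (Lemma \ref{pro:main132} together with $\widetilde{\Omega}_{3}\in\mathscr{U}$), and closes with the boundedness of $\mathcal{M}_{V_{\alpha}}$ on $L^{\Phi_{a}}$ where $\Phi_{a}(t)=\Phi_{1}(t^{2/a_{\Phi_{1}}})\in\mathscr{U}\cap\nabla_{2}$ (Theorem \ref{pro:mainplaaqq6}). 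Your Luecking-style lattice argument --- sub-mean-value on Bergman balls run through the subharmonic function $|F|^{\rho}$ and Jensen applied to the convex function $\Phi_{1}(t^{1/\rho})$, the bound $\mu(B_{j})\lesssim 1/\Psi(1/V_{\alpha}(B_{j}))$ from (i), and the single inequality $\Psi(\lambda_{j}/V)\leq\lambda_{j}\Psi(1/V)$ for $\lambda_{j}\leq 1$, which is exactly the monotonicity of $\Psi(t)/t$ --- is correct and arguably more economical, since it bypasses the weak-type and strong-type maximal estimates entirely. It is, however, specific to the Bergman setting and would not transfer to the Hardy analogue (Theorem \ref{pro:main127}), which is presumably why the paper keeps a single maximal-function template for both theorems.

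There is one concrete error, in (iv)$\Rightarrow$(i): the test function $F_{I}(z)=\Phi_{1}^{-1}(1/|I|^{2+\alpha})\,(i|I|/(z-\overline{z_{I}}))^{(2+\alpha)/\rho}$ does \emph{not} belong to $A_{\alpha}^{\Phi_{1}}(\mathbb{C}_{+})$. The monotonicity of $t\mapsto\Phi_{1}(t)/t^{\rho}$ only yields $\Phi_{1}(|F_{I}(z)|)\leq |z-\overline{z_{I}}|^{-(2+\alpha)}$, and $\int_{\mathbb{C}_{+}}|z-\overline{z_{I}}|^{-(2+\alpha)}dV_{\alpha}(z)$ diverges: by Lemma \ref{pro:main26} the inner integral produces $(v+|I|)^{-(1+\alpha)}$, and $\int_{0}^{\infty}v^{\alpha}(v+|I|)^{-(1+\alpha)}dv=+\infty$ because the convergence condition $\beta>\alpha+1$ of Lemma \ref{pro:main27} fails with equality. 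This is not an artifact of the upper bound: for $\Phi_{1}(t)=t$, $\rho=1$, the bound is an identity and $\|F_{I}\|_{A_{\alpha}^{1}}=\infty$, so the exponent $(2+\alpha)/\rho$ is exactly critical. The cure is to take the kernel exponent $(4+2\alpha)/\rho$ --- the one already appearing in your own assertion (ii) and in Proposition \ref{pro:main10aqa9} of the paper --- after which the rest of your argument (the lower bound $|F_{I}|\gtrsim\Phi_{1}^{-1}(1/|I|^{2+\alpha})$ on $Q_{I}$ and the choice of $\lambda$) goes through verbatim.
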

The following embedding result follows from the above.
\begin{corollaire}\label{pro:main151}
Let $\alpha,\beta > -1$ and $\Phi_{1},\Phi_{2} \in \mathscr{L} \cup \mathscr{U}$  such that $t\mapsto\frac{\Phi_{2}(t)}{\Phi_{1}(t)}$ is non-decreasing on  $\mathbb{R_{+}^{*}}$. The Bergman-Orlicz space $A_{\alpha}^{\Phi_{1}}(\mathbb{C_{+}})$ embeds continuously into the Bergman-Orlicz space $A_{\beta}^{\Phi_{2}}(\mathbb{C_{+}})$ if and only if there exists a constant $C > 0$ such that 
 for all $t > 0$, 
\begin{equation}\label{eq:ibejecti2}
  \Phi_{1}^{-1}(t^{2+\alpha}) \leq \Phi_{2}^{-1}(Ct^{2+\beta})
.\end{equation}
\end{corollaire}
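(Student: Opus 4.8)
The plan is to apply Theorem~\ref{pro:main150} to the measure $d\mu=dV_\beta$, which is finite on every Carleson square, and to identify the asserted embedding with condition~(iii) for this particular $\mu$. First I would record the elementary computation
\begin{equation*}
V_\beta(Q_I)=\int_I\!\!\int_0^{|I|}y^\beta\,dy\,dx=\frac{|I|^{2+\beta}}{1+\beta},
\end{equation*}
so that the $(\alpha,\Phi_2\circ\Phi_1^{-1})$-Carleson condition~(i) for $V_\beta$ reads
\begin{equation*}
\frac{|I|^{2+\beta}}{1+\beta}\le\frac{C}{(\Phi_2\circ\Phi_1^{-1})\!\left(1/|I|^{2+\alpha}\right)}
\end{equation*}
for every interval $I$ of nonzero length.

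The core step is to show that the continuous embedding $A_\alpha^{\Phi_1}(\mathbb{C}_+)\hookrightarrow A_\beta^{\Phi_2}(\mathbb{C}_+)$ is equivalent to assertion~(iii) of Theorem~\ref{pro:main150} applied to $\mu=V_\beta$. Writing $L=\|F\|_{A_\alpha^{\Phi_1}}^{lux}$, assertion~(iii) is the modular bound $\int_{\mathbb{C}_+}\Phi_2(|F|/L)\,dV_\beta\le C_2$, whereas the embedding says precisely that $\int_{\mathbb{C}_+}\Phi_2(|F|/(C'L))\,dV_\beta\le1$ for some fixed $C'$ independent of $F$. To move between the thresholds $C_2$ and $1$ I would invoke the one-sided type estimates for $\Phi_2$: for $C'\ge1$ one has $\Phi_2(x/C')\le c\,C'^{-q}\Phi_2(x)$ and $\Phi_2(C'x)\le c\,C'^{r}\Phi_2(x)$ with $q,r>0$ and $c$ depending only on $\Phi_2$ (using convexity and upper type when $\Phi_2\in\mathscr{U}$, and lower type together with the monotonicity of $t\mapsto\Phi_2(t)/t$ when $\Phi_2\in\mathscr{L}$). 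These allow the modular to be rescaled freely, so any finite bound can be absorbed into a single constant, uniformly in $F$. Combined with Theorem~\ref{pro:main150}, this shows that the embedding holds if and only if $V_\beta$ is an $(\alpha,\Phi_2\circ\Phi_1^{-1})$-Carleson measure.

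It then remains to convert the displayed Carleson inequality into~(\ref{eq:ibejecti2}). Setting $h=|I|$ and substituting $u=1/h^{2+\alpha}$, the condition becomes $(\Phi_2\circ\Phi_1^{-1})(u)\le Cu^{(2+\beta)/(2+\alpha)}$ for all $u>0$; specializing $u=t^{2+\alpha}$ gives $\Phi_2(\Phi_1^{-1}(t^{2+\alpha}))\le Ct^{2+\beta}$, and applying the increasing bijection $\Phi_2^{-1}$ yields $\Phi_1^{-1}(t^{2+\alpha})\le\Phi_2^{-1}(Ct^{2+\beta})$, which is~(\ref{eq:ibejecti2}). Since $\alpha,\beta>-1$ makes each substitution a bijection of $\mathbb{R}_+^*$ and $\Phi_1,\Phi_2$ are homeomorphisms, every step reverses: applying $\Phi_2$ to~(\ref{eq:ibejecti2}) and undoing the change of variables recovers the Carleson condition. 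Thus all four statements are equivalent.

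I expect the genuine difficulty to lie in the second step, namely the clean identification of the norm embedding with the modular condition~(iii): the homogeneity of the Luxemburg functional must be combined with the one-sided type bounds so that a single constant $C'$ controls every $F$ at once, and this has to be verified uniformly across the two regimes $\Phi_2\in\mathscr{U}$ (convex) and $\Phi_2\in\mathscr{L}$ (concave). The remaining ingredients, the evaluation of $V_\beta(Q_I)$ and the reversible change of variables, are routine.
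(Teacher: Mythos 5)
Your argument is correct and follows essentially the same route as the paper: apply Theorem~\ref{pro:main150} with $\mu=V_\beta$, identify the continuous embedding with the uniform modular bound in assertion~(iii), and translate the resulting Carleson condition on $V_\beta$ into inequality~(\ref{eq:ibejecti2}). The only cosmetic difference is that the paper delegates the computation of $V_\beta(Q_I)$ and the change of variables to Proposition~\ref{pro:main14a9} (with $s=2+\alpha$), whereas you carry it out by hand, and you make explicit the rescaling between the modular bound and the Luxemburg-norm inequality, which the paper leaves implicit.
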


Let  $\Phi \in \mathscr{C}^{1}(\mathbb{R}_{+})$  a growth function. The lower and the upper indices of $\Phi$ are respectively defined by
$$ a_\Phi:=\inf_{t>0}\frac{t\Phi'(t)}{\Phi(t)}
  \hspace*{1cm}\textrm{and} \hspace*{1cm} b_\Phi:=\sup_{t>0}\frac{t\Phi'(t)}{\Phi(t)}.          $$

Let $p, q >0$ and  $\Phi$  a growth function. We say that $\Phi$ belongs to $\widetilde{\mathscr{U}}^{q}$ (resp. $\widetilde{\mathscr{L}}_{p}$) if the following assertions are satisfied 
\begin{itemize}
\item[(a)]   $\Phi \in \mathscr{U}^{q}$ (resp. $\Phi \in \mathscr{L}_{p}$).
\item[(b)] there exists a constant $C_{1} > 0$ such that 
 for all $s,t> 0$, 
\begin{equation}\label{eq:ibejecaqti2}
  \Phi(st) \leq C_{1}\Phi(s)\Phi(t).\end{equation}
\item[(c)] there exists a constant  $C_{2}>0$  such that for all $s,t \geq 1$ 
\begin{equation}\label{eq:ibide2}
\Phi\left(\frac{s}{t}\right)\leq C_{2}\frac{\Phi(s)}{t^{q}} 
\end{equation}
resp.  
\begin{equation}\label{eq:iaqide2}
\Phi\left(\frac{s}{t}\right)\leq C_{2}\frac{s^{p}}{\Phi(t)}.
\end{equation}
\end{itemize}
We put $\widetilde{\mathscr{U}}:=\bigcup_{q\geq 1}\widetilde{\mathscr{U}}^{q}$ (resp. $\widetilde{\mathscr{L}}:=\bigcup_{0<p \leq 1}\widetilde{\mathscr{L}}_{p}$).

Let $\omega : \mathbb{R_{+}^{*}} \longrightarrow \mathbb{R_{+}^{*}}$ be a  function. An analytic function $F$ in $\mathbb{C_{+}}$ is said to be in  $H_{\omega}^{\infty}(\mathbb{C_{+}})$ if 
\begin{equation}\label{eq:ibet78ide12}
\|F\|_{H_{\omega}^{\infty}}:=\sup_{z\in \mathbb{C_{+}}}\frac{|f(z)|}{\omega(\mathrm{Im}(z))}< \infty
.\end{equation}
If $\omega$ is continuous then $(H_{\omega}^{\infty}(\mathbb{C_{+}}), \|.\|_{H_{\omega}^{\infty}})$ is a Banach space.

Let $X$ and $Y$ be two analytic function spaces which are metric spaces, with respective metrics $d_{X}$ and $d_{Y}$. An analytic function $g$ is said to be a multiplier from $X$ to $Y$, if there exists a constant $C> 0$ such that for any  $f \in X$,
\begin{equation}\label{eq:ibetide12}
d_{Y}(fg, 0) \leq Cd_{X}(f,0)
.\end{equation}
We denote by $\mathcal{M}(X, Y)$ the set of multipliers from $X$ to $Y$.
\medskip

The following is a characterization of pointwise multipliers from an Hardy-Orlicz space to a Bergman-Orlicz space. It is an extension of \cite[Theorem 2.7]{djesehb}.
\begin{theorem}\label{pro:main16qs13}
Let $\Phi_{1}\in \mathscr{L} \cup \mathscr{U}$ and $\Phi_{2}\in \widetilde{\mathscr{L}} \cup \widetilde{\mathscr{U}}$ such that the function   $t\mapsto\frac{\Phi_{2}(t)}{\Phi_{1}(t)}$ is non-decreasing on  $\mathbb{R_{+}^{*}}$. Let  $\alpha > -1$ and  put 
$$ \omega(t)=\dfrac{\Phi_{2}^{-1}\left(\frac{1}{t^{2+\alpha}}\right)}{\Phi_{1}^{-1}\left(\frac{1}{t}\right)},  ~~ \forall~ t  >0.   $$
The following assertions are satisfied.
\begin{itemize}
 \item[(i)] If $0< a_{\Phi_{1}}\leq b_{\Phi_{1}} <  a_{\Phi_{2}}\leq b_{\Phi_{2}}  <\infty$ then 
$$  \mathcal{M}( H^{\Phi_{1}}(\mathbb{C_{+}}) ,  A^{\Phi_{2}}_{\alpha}(\mathbb{C_{+}})) = H_{\omega}^{\infty}(\mathbb{C_{+}}).    $$ 
\item[(ii)] If $\omega \approx 1$  then  
$$  \mathcal{M}( H^{\Phi_{1}}(\mathbb{C_{+}}) ,  A^{\Phi_{2}}_{\alpha}(\mathbb{C_{+}})) = H^{\infty}(\mathbb{C_{+}}).    $$
\item[(ii)] If $\omega$ is decreasing and $\lim_{t \to 0}\omega(t)=0$ then 
 $$  \mathcal{M}( H^{\Phi_{1}}(\mathbb{C_{+}}) ,  A^{\Phi_{2}}_{\alpha}(\mathbb{C_{+}})) = \{ 0 \}.    $$
\end{itemize}
\end{theorem}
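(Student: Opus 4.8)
The plan is to prove the uniform inclusion $\mathcal{M}(H^{\Phi_{1}}(\mathbb{C}_{+}),A_{\alpha}^{\Phi_{2}}(\mathbb{C}_{+}))\subseteq H_{\omega}^{\infty}(\mathbb{C}_{+})$ first, since it uses no hypothesis on the indices and serves all three cases, and then to treat the three reverse inclusions separately. For this inclusion I would rely on the standard pointwise growth estimates for functions in these spaces, namely
\[ |f(z)|\lesssim\Phi_{1}^{-1}\!\left(\tfrac{1}{\mathrm{Im}(z)}\right)\|f\|_{H^{\Phi_{1}}}^{lux}\quad(f\in H^{\Phi_{1}}),\qquad |h(z)|\lesssim\Phi_{2}^{-1}\!\left(\tfrac{1}{\mathrm{Im}(z)^{2+\alpha}}\right)\|h\|_{A_{\alpha}^{\Phi_{2}}}^{lux}\quad(h\in A_{\alpha}^{\Phi_{2}}), \]
together with test functions $f_{z_{0}}\in H^{\Phi_{1}}$ (built from powers of $\mathrm{Im}(z_{0})/(z-\overline{z_{0}})^{2}$ as in \cite{djesehb}) satisfying $\|f_{z_{0}}\|_{H^{\Phi_{1}}}^{lux}\lesssim 1$ and $|f_{z_{0}}(z_{0})|\gtrsim\Phi_{1}^{-1}(1/\mathrm{Im}(z_{0}))$. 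Given a multiplier $g$, applying the Bergman estimate to $h=f_{z_{0}}g$, bounding $\|f_{z_{0}}g\|_{A_{\alpha}^{\Phi_{2}}}^{lux}\lesssim\|f_{z_{0}}\|_{H^{\Phi_{1}}}^{lux}\lesssim 1$, and dividing by $|f_{z_{0}}(z_{0})|$ would give $|g(z_{0})|\lesssim\omega(\mathrm{Im}(z_{0}))$ for every $z_{0}$, i.e. $g\in H_{\omega}^{\infty}$.

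For the reverse inclusion in (i) I would start from $g\in H_{\omega}^{\infty}$, so $|g(z)|\leq C_{g}\,\omega(\mathrm{Im}\,z)$, and exploit the sub-multiplicativity (condition (b) of $\widetilde{\mathscr{L}}\cup\widetilde{\mathscr{U}}$) to decouple $f$ and $g$: choosing $\lambda=\beta\|f\|_{H^{\Phi_{1}}}^{lux}$,
\[ \Phi_{2}\!\left(\tfrac{|f(z)g(z)|}{\lambda}\right)=\Phi_{2}\!\left(\tfrac{|f(z)|}{\|f\|_{H^{\Phi_{1}}}^{lux}}\cdot\tfrac{|g(z)|}{\beta}\right)\leq C_{1}\,\Phi_{2}\!\left(\tfrac{|f(z)|}{\|f\|_{H^{\Phi_{1}}}^{lux}}\right)\Phi_{2}\!\left(\tfrac{|g(z)|}{\beta}\right). \]
Integrating against $dV_{\alpha}$ and setting $d\mu_{\beta}:=\Phi_{2}(|g|/\beta)\,dV_{\alpha}$ (a measure independent of $f$), the problem reduces to showing that $\mu_{\beta}$ is a $\Phi_{2}\circ\Phi_{1}^{-1}$-Carleson measure with a small enough constant (which I can arrange by taking $\beta$ large); Theorem \ref{pro:main127}, implication (i)$\Rightarrow$(iii), then yields $\int_{\mathbb{C}_{+}}\Phi_{2}(|fg|/\lambda)\,dV_{\alpha}\lesssim 1$ and hence the multiplier inequality.

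The crux — and the step I expect to be the main obstacle — is the Carleson estimate for $\mu_{\beta}$, which is the only place the strict index gap of (i) is needed. Writing $\Psi:=\Phi_{2}\circ\Phi_{1}^{-1}$, I would use condition (c) and the upper type of $\Phi_{2}$ to get $\Phi_{2}(\omega(y))\lesssim y^{-(2+\alpha)}/\Psi(1/y)$, so that integrating $y^{\alpha}\,dx\,dy$ over $Q_{I}$ and substituting $u=1/y$ gives $\mu_{\beta}(Q_{I})\lesssim|I|\int_{1/|I|}^{\infty}\Psi(u)^{-1}\,du$. The desired bound $\mu_{\beta}(Q_{I})\lesssim 1/\Psi(1/|I|)$ is then equivalent to $\int_{T}^{\infty}\Psi(u)^{-1}\,du\lesssim T/\Psi(T)$, which holds exactly when the lower index of $\Psi$ exceeds $1$. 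A change of variable $v=\Phi_{1}^{-1}(u)$ shows $\tfrac{u\Psi'(u)}{\Psi(u)}=\big(\tfrac{v\Phi_{2}'(v)}{\Phi_{2}(v)}\big)\big/\big(\tfrac{v\Phi_{1}'(v)}{\Phi_{1}(v)}\big)\geq a_{\Phi_{2}}/b_{\Phi_{1}}>1$, using precisely $b_{\Phi_{1}}<a_{\Phi_{2}}$; combined with the two inclusions this proves (i).

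Finally, for (ii) the hypothesis $\omega\approx 1$ makes $H_{\omega}^{\infty}=H^{\infty}$, so the first paragraph already gives $\mathcal{M}\subseteq H^{\infty}$; conversely $\omega\approx 1$ is equivalent (after $s=1/t$ and adjusting constants using the positive lower index of $\Phi_{2}$) to the embedding criterion $\Phi_{1}^{-1}(s)\leq\Phi_{2}^{-1}(Cs^{2+\alpha})$ of Corollary \ref{pro:main149}, so $H^{\Phi_{1}}\hookrightarrow A_{\alpha}^{\Phi_{2}}$ continuously, and since any $g\in H^{\infty}$ multiplies $A_{\alpha}^{\Phi_{2}}$ into itself (sub-multiplicativity applied to $|fg|\leq\|g\|_{\infty}|f|$) one gets $H^{\infty}\subseteq\mathcal{M}$. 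For (iii), the first paragraph gives $|g(z)|\leq C_{g}\,\omega(\mathrm{Im}\,z)$ for every multiplier, and since $\lim_{t\to 0}\omega(t)=0$ this bound is uniform in $x$ and forces $g(x+iy)\to 0$ as $y\to 0^{+}$ uniformly; thus $g$ extends continuously to $\overline{\mathbb{C}_{+}}$ with zero boundary values, and the Schwarz reflection principle forces $g\equiv 0$, giving $\mathcal{M}=\{0\}$.
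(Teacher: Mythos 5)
Your proposal is correct and follows essentially the same route as the paper: the inclusion $\mathcal{M}(H^{\Phi_{1}},A^{\Phi_{2}}_{\alpha})\subseteq H^{\infty}_{\omega}$ via the pointwise Bergman--Orlicz estimate applied to $f_{z_{0}}g$ with the test functions of Proposition \ref{pro:main10aqa9} (this is the paper's Lemma \ref{pro:main160}), and the converse in (i) by decoupling $\Phi_{2}(|fg|)$ with the sub-multiplicativity of $\Phi_{2}$, reducing to a $\Phi_{2}\circ\Phi_{1}^{-1}$-Carleson estimate for a measure comparable to $dxdy/(y^{2}\,\Phi_{2}\circ\Phi_{1}^{-1}(1/y))$, and invoking Theorem \ref{pro:main127}; your inline verification that the lower index of $\Phi_{2}\circ\Phi_{1}^{-1}$ is at least $a_{\Phi_{2}}/b_{\Phi_{1}}>1$, hence that the Dini-type integral condition holds, is exactly the content of Propositions \ref{pro:main4paqm4}, \ref{pro:main4pm4} and \ref{pro:main49qa2} which the paper cites instead. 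The only difference is that you also write out arguments for assertions (ii) and (iii) (via Corollary \ref{pro:main149} and reflection, respectively), which the paper's written proof leaves implicit, and both are fine.
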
	

The following is a characterization of pointwise multipliers Bergman-Orlicz spaces. It is an extension of \cite[Theorem 2.8]{djesehb}.
\begin{theorem}\label{pro:main1sr606}
Let $\Phi_{1}\in \mathscr{L} \cup \mathscr{U}$ and $\Phi_{2}\in \widetilde{\mathscr{L}} \cup \widetilde{\mathscr{U}}$ such that the function   $t\mapsto\frac{\Phi_{2}(t)}{\Phi_{1}(t)}$ is non-decreasing on  $\mathbb{R_{+}^{*}}$. Let  $\alpha, \beta > -1$ and  put 
$$   \omega(t)=\dfrac{\Phi_{2}^{-1}\left(\frac{1}{t^{2+\beta}}\right)}{\Phi_{1}^{-1}\left(\frac{1}{t^{2+\alpha}}\right)},  ~~ \forall~ t  >0.  $$
The following assertions are satisfied.
\begin{itemize}
\item[(i)] If $0< a_{\Phi_{1}}\leq b_{\Phi_{1}} <  a_{\Phi_{2}}\leq b_{\Phi_{2}}  <\infty$ then
$$  \mathcal{M}\left( A_{\alpha}^{\Phi_{1}}(\mathbb{C_{+}}) ,  A^{\Phi_{2}}_{\beta}(\mathbb{C_{+}})\right) = H_{\omega}^{\infty}(\mathbb{C_{+}}).  $$
\item[(ii)]  If $\omega \approx 1$  then 
$$ \mathcal{M}\left( A_{\alpha}^{\Phi_{1}}(\mathbb{C_{+}}) ,  A^{\Phi_{2}}_{\beta}(\mathbb{C_{+}})\right) = H^{\infty}(\mathbb{C_{+}}).   $$
\item[(iii)] If $\omega$ is decreasing and $\lim_{t \to 0}\omega(t)=0$ then
$$  \mathcal{M}\left( A_{\alpha}^{\Phi_{1}}(\mathbb{C_{+}}) ,  A^{\Phi_{2}}_{\beta}(\mathbb{C_{+}})\right) = \{ 0 \}.  $$
\end{itemize}
\end{theorem}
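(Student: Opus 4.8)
The plan is to reproduce the architecture of the proof of Theorem~\ref{pro:main16qs13}, simply substituting the Bergman--Orlicz Carleson theory (Theorem~\ref{pro:main150} and Corollary~\ref{pro:main151}) for the Hardy--Orlicz one (Theorem~\ref{pro:main127} and Corollary~\ref{pro:main149}) used there. Two ingredients are needed throughout. First, the pointwise growth estimate $|F(z)|\lesssim \Phi^{-1}\!\left(1/y^{2+\gamma}\right)\|F\|_{A_{\gamma}^{\Phi}}^{lux}$, valid for every $F\in A_{\gamma}^{\Phi}(\mathbb{C_{+}})$ and $z=x+iy$. Second, a family of extremal test functions $f_{z}\in A_{\alpha}^{\Phi_{1}}(\mathbb{C_{+}})$ with $\|f_{z}\|_{A_{\alpha}^{\Phi_{1}}}^{lux}\approx 1$ that nearly saturate this estimate, $|f_{z}(z)|\approx \Phi_{1}^{-1}(1/y^{2+\alpha})$; these are built from powers of $w\mapsto y/(w-\overline{z})^{2}$, the exponent being dictated by the parameter $\rho\in\{1;a_{\Phi_{1}}\}$ of Theorem~\ref{pro:main150}. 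The decisive structural fact is that $\Phi_{2}\in\widetilde{\mathscr{L}}\cup\widetilde{\mathscr{U}}$ is submultiplicative, \eqref{eq:ibejecaqti2}, and obeys the quotient bounds \eqref{eq:ibide2}--\eqref{eq:iaqide2}.

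In the core case (i) I would prove the two inclusions separately. For necessity, take $g\in\mathcal{M}(A_{\alpha}^{\Phi_{1}},A_{\beta}^{\Phi_{2}})$ and apply it to $f_{z}$: then $\|f_{z}g\|_{A_{\beta}^{\Phi_{2}}}^{lux}\lesssim\|f_{z}\|_{A_{\alpha}^{\Phi_{1}}}^{lux}\approx 1$, so the growth estimate for $A_{\beta}^{\Phi_{2}}$ gives $|f_{z}(z)|\,|g(z)|\lesssim \Phi_{2}^{-1}(1/y^{2+\beta})$, and dividing by $|f_{z}(z)|\approx\Phi_{1}^{-1}(1/y^{2+\alpha})$ yields exactly $|g(z)|\lesssim\omega(y)$, i.e. $g\in H_{\omega}^{\infty}(\mathbb{C_{+}})$. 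For sufficiency, introduce $d\mu_{g}:=\Phi_{2}(|g|)\,dV_{\beta}$ and show that $g\in H_{\omega}^{\infty}$ forces $\mu_{g}$ to be an $(\alpha,\Phi_{2}\circ\Phi_{1}^{-1})$-Carleson measure: inserting $|g(z)|\le C\omega(y)$ and expanding $\Phi_{2}(C\omega(y))$ by \eqref{eq:ibejecaqti2} and \eqref{eq:ibide2}--\eqref{eq:iaqide2} turns $\mu_{g}(Q_{I})$ into an integral over $Q_{I}$ that the upper- and lower-type bounds control by $1/\Phi_{2}(\Phi_{1}^{-1}(1/|I|^{2+\alpha}))$. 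Theorem~\ref{pro:main150} then provides the embedding \eqref{eq:iberginjectiber} for $\mu_{g}$, and a final use of submultiplicativity in the form $\Phi_{2}(|Fg|/\lambda)\le C\,\Phi_{2}(|F|/\|F\|_{A_{\alpha}^{\Phi_{1}}}^{lux})\,\Phi_{2}(|g|)$ with $\lambda\approx\|F\|_{A_{\alpha}^{\Phi_{1}}}^{lux}$ upgrades it to $\|Fg\|_{A_{\beta}^{\Phi_{2}}}^{lux}\lesssim\|F\|_{A_{\alpha}^{\Phi_{1}}}^{lux}$.

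The remaining regimes are short and rely on the fact that necessity above needs no index hypothesis. When $\omega\approx 1$, the space $H_{\omega}^{\infty}$ collapses to $H^{\infty}(\mathbb{C_{+}})$, while $\omega\gtrsim 1$ is, after the substitution $t\mapsto 1/t$, the embedding criterion \eqref{eq:ibejecti2} of Corollary~\ref{pro:main151}; hence $A_{\alpha}^{\Phi_{1}}\hookrightarrow A_{\beta}^{\Phi_{2}}$ continuously and the multipliers are exactly the bounded functions, proving (ii). For (iii), any $g\in\mathcal{M}$ lies in $H_{\omega}^{\infty}$ by necessity, so $\sup_{x}|g(x+iy)|\lesssim\omega(y)\to 0$ as $y\to 0$; thus $g$ extends continuously to $\mathbb{R}$ with vanishing boundary values, and the Schwarz reflection principle together with the identity theorem force $g\equiv 0$, giving $\mathcal{M}(A_{\alpha}^{\Phi_{1}},A_{\beta}^{\Phi_{2}})=\{0\}$.

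I expect the main obstacle to be the sufficiency step of case (i): verifying that $g\in H_{\omega}^{\infty}$ makes $\mu_{g}$ a genuine $(\alpha,\Phi_{2}\circ\Phi_{1}^{-1})$-Carleson measure. This is where all the technical hypotheses are consumed simultaneously --- the submultiplicativity \eqref{eq:ibejecaqti2}, the quotient estimates \eqref{eq:ibide2}--\eqref{eq:iaqide2}, the relation $\Phi'\approx\Phi/t$, and above all the strict index separation $b_{\Phi_{1}}<a_{\Phi_{2}}$, which is precisely what makes the integral of $\Phi_{2}(\omega(y))$ against $dV_{\beta}$ over a Carleson box converge and match the prescribed decay; the borderline case is lost by any crude estimate. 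A secondary difficulty is to construct test functions that simultaneously have controlled Luxemburg norm and saturate the growth bound in both the concave regime $\Phi_{1}\in\mathscr{L}$ and the convex regime $\Phi_{1}\in\mathscr{U}$, which is the reason the exponent $\rho$ must be allowed the two values $1$ and $a_{\Phi_{1}}$.
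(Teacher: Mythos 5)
Your proposal is correct and follows essentially the same route as the paper: necessity via the pointwise growth estimate and the test functions of Proposition \ref{pro:main10aqa9} (packaged in the paper as Lemma \ref{pro:main160}), and sufficiency by combining $|g|\lesssim\omega$ with the submultiplicativity estimate of Lemma \ref{pro:main80jdf} to reduce everything to the $(\alpha,\Phi_{2}\circ\Phi_{1}^{-1})$-Carleson property of an explicit measure — exactly the Dini/$\nabla_{2}$ integral you identify, handled in the paper by Proposition \ref{pro:main49qa2} with the index separation $b_{\Phi_{1}}<a_{\Phi_{2}}$ guaranteeing $\Phi_{2}\circ\Phi_{1}^{-1}\in\nabla_{2}$ — followed by the embedding of Theorem \ref{pro:main150}. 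The only differences are cosmetic: you verify the Carleson condition for $\Phi_{2}(|g|)\,dV_{\beta}$ directly over a box rather than dominating it by the measure $\frac{dx\,dy}{y^{2}\,\Phi_{2}\circ\Phi_{1}^{-1}\left(\frac{1}{y^{2+\alpha}}\right)}$, and you spell out parts (ii) and (iii), which the paper's written proof leaves to the reader.
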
	
The paper is organized as folllows. In Section 3, we provide some further definitions and useful results on growth functions, Hardy-Orlicz and Bergman-Orliz spaces. Indeed, there is no actual  reference for a full study of our spaces in the literature, consequently, we are proving several related results needed in our study. In Section 4, we prove some characterizations of Carleson measures, in particular, a general result that encompasses assertions (ii) in both Theorem \ref{pro:main127} and Theorem \ref{pro:main150}. Our main results are proved in Section 5.

\section{Some definitions and useful properties}

We present in this section some useful results needed in our presentation.

\subsection{Some properties of growth functions.} 

Let  $\Phi$ be a growth function. We say that  $\Phi$ satisfies the $\Delta_{2}-$condition (or $\Phi \in \Delta_{2}$) if there exists a constant
$K > 1$ such that
\begin{equation}\label{eq:delta2}
\Phi(2t) \leq K \Phi(t),~ \forall~ t > 0.\end{equation}
It is obvious that any growth function  $\Phi \in \mathscr{L} \cup \mathscr{U}$ satisfies the  $\Delta_{2}-$condition. 

Let $\Phi$ be a convex growth function.  The complementary function of $\Phi$ is the function $\Psi$ defined by
$$ \Psi(s)=\sup_{t\geq 0}\{st-\Phi(t) \}, ~ \forall~  s \geq 0.       $$

Let $\Phi$ be a convex growth function. We say that $\Phi$ satisfies $\nabla_{2}-$condition (or $\Phi \in \nabla_{2}$)  if $\Phi$ and its complementary function both satisfy $\Delta_{2}-$condition.

Let $\Phi \in \mathscr{C}^{1}(\mathbb{R}_{+})$ a growth function. The following assertions are satisfied.
\begin{itemize}
\item[(i)] If  $\Phi \in \mathscr{L} \cup \mathscr{U}$  then  $0< a_\Phi\leq b_\Phi <\infty$.
\item[(ii)]  $\Phi \in \mathscr{U}$      if and only if  $1\leq a_\Phi\leq b_\Phi <\infty$. Moreover,  $\Phi \in \mathscr{U}\cap \nabla_{2}$  if and only if  $1< a_\Phi\leq b_\Phi <\infty$, (see. \cite{djesehb}).
\item[(iii)] If  $0< a_\Phi\leq b_\Phi <\infty$ then the function  $t\mapsto \frac{\Phi(t)}{t^{a_\Phi}}$ is increasing on $\mathbb{R}_{+}^{*}$ while the function  $t\mapsto \frac{\Phi(t)}{t^{b_\Phi}}$ is decreasing on $\mathbb{R}_{+}^{*}$ (see. \cite[Lemma 2.1]{sehbaedgc}).
\end{itemize}

Let $\Phi$ be a  growth function and $q > 0$. If $\Phi$ is a one-to-one growth function then $\Phi \in  \mathscr{U}^{q}$ if and only if $\Phi^{-1} \in  \mathscr{L}_{1/q}$ (see. \cite[Proposition 2.1]{sehbatchoundja1}).

\begin{lemme}[Lemma 3.1, \cite{djesehb}]\label{pro:main40l6aqlj}
Let  $\Phi \in \mathscr{U}$. The following assertions are equivalents.
\begin{itemize}
\item[(i)] $\Phi \in \nabla_{2}$.
\item[(ii)] There exists a constant $C_{1} >0$ such that for all $t>0$,  
\begin{equation}\label{eq:conditiondedinis}
\int_0^t\frac{\Phi(s)}{s^2}ds\le C_{1}\frac{\Phi(t)}{t}.\end{equation}
\item[(iii)] There exists a constant $C_{2}>1$ such that for all $t>0$,
\begin{equation}\label{eq:conditiondedin}
\Phi(t)\leq \frac{1}{2C_{2}}\Phi(C_{2}t). 
\end{equation}
\end{itemize}
\end{lemme}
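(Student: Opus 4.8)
The plan is to establish the two equivalences (ii)$\Leftrightarrow$(iii) and (i)$\Leftrightarrow$(iii) separately, the first by elementary real-variable estimates and the second by Orlicz duality. The guiding observation is that since $\Phi\in\mathscr{U}$ automatically satisfies the $\Delta_{2}$-condition, membership of $\Phi$ in $\nabla_{2}$ amounts to the single requirement that the complementary function $\Psi$ satisfy $\Delta_{2}$; this is what will be matched against condition (iii).

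For the implication (iii)$\Rightarrow$(ii) I would decompose the integral geometrically, writing $\int_0^t\frac{\Phi(s)}{s^2}\,ds=\sum_{k\ge0}\int_{t/C_2^{k+1}}^{t/C_2^{k}}\frac{\Phi(s)}{s^2}\,ds$, bounding $\Phi(s)\le\Phi(t/C_2^{k})$ on each ring, and iterating (iii) in the form $\Phi(t/C_2^{k})\le(2C_2)^{-k}\Phi(t)$; the resulting geometric series $\sum_k 2^{-k}$ converges and yields $C_1=2(C_2-1)$. For the converse (ii)$\Rightarrow$(iii), set $G(t)=\int_0^t\frac{\Phi(s)}{s^2}\,ds$, so that $G'(t)=\Phi(t)/t^2$ and (ii) reads $G(t)\le C_1 t\,G'(t)$; this differential inequality gives $(\log G)'(t)\ge\frac{1}{C_1 t}$, hence the power growth $G(\lambda t)\ge\lambda^{1/C_1}G(t)$ for every $\lambda>1$. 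Since the monotonicity of $t\mapsto\Phi(t)/t$ together with $\Delta_{2}$ yields the two-sided comparison $G(t)\approx\Phi(t)/t$, this power growth transfers to $\Phi(\lambda t)\gtrsim\lambda^{1+1/C_1}\Phi(t)$, and choosing $\lambda=C_2$ large enough that the right-hand side dominates $2C_2\Phi(t)$ produces (iii).

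It remains to link (iii) with $\nabla_{2}$, which I would do through the inverse functions using the classical comparison $t\le\Phi^{-1}(t)\Psi^{-1}(t)\le 2t$. Rewriting the $\Delta_{2}$-condition for $\Psi$ in inverse form as $\Psi^{-1}(Kt)\ge2\Psi^{-1}(t)$ and substituting the comparison converts it into a growth statement for $\Phi^{-1}$ which, after passing back through $\Phi$, becomes exactly an inequality of the shape $\Phi(\ell t)\ge 2\ell\Phi(t)$, i.e. (iii); the reverse implication runs the same chain backwards. I expect this last step to be the main obstacle: the factor $2\ell$ in (iii) does not come out of a single application of $\Delta_{2}$, so one must first iterate $\Psi(2s)\le K\Psi(s)$ to $\Psi(2^n s)\le K^n\Psi(s)$ and choose $n$ large enough that the gain $2^n$ overtakes the loss $K^n$ times the comparison constant, the remaining manipulations being routine bookkeeping with the inverse-function inequalities. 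Alternatively, since the excerpt records that $\Phi\in\mathscr{U}\cap\nabla_{2}$ is equivalent to $a_\Phi>1$, one may instead verify (iii)$\Leftrightarrow a_\Phi>1$: the implication $a_\Phi>1\Rightarrow$(iii) is immediate from the monotonicity of $t\mapsto\Phi(t)/t^{a_\Phi}$ by taking $C_2\ge 2^{1/(a_\Phi-1)}$, whereas the converse again requires the duality above rather than mere monotonicity of $\Phi(t)/t^{a_\Phi}$, which only delivers the weaker bound $b_\Phi>1$.
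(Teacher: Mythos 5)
The paper does not actually prove this lemma: it is imported verbatim from the authors' earlier work (Lemma~3.1 of \cite{djesehb}), so your argument can only be judged on its own merits, and on those merits it is essentially correct. The equivalence (ii)$\Leftrightarrow$(iii) is fully and correctly argued: the dyadic-in-$C_2$ ring decomposition with the iterate $\Phi(t/C_2^{k})\le (2C_2)^{-k}\Phi(t)$ does give $C_1=2(C_2-1)$, and in the converse the differential inequality $G\le C_1 tG'$ integrates to $G(\lambda t)\ge\lambda^{1/C_1}G(t)$, while the two-sided comparison $G(t)\approx\Phi(t)/t$ follows from $G(t)\ge\int_{t/2}^{t}\Phi(s)s^{-2}\,ds\ge\Phi(t/2)/t$ together with the $\Delta_2$-condition (automatic for $\Phi\in\mathscr{U}$) on one side and from (ii) itself on the other; choosing $C_2\ge(2C_1K)^{C_1}$ then yields (iii). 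For (i)$\Leftrightarrow$(iii) you have correctly identified both the right tool (the comparison $t\le\Phi^{-1}(t)\Psi^{-1}(t)\le 2t$) and the genuine difficulty, namely that a single application of $\Delta_2$ for $\Psi$ is swallowed by the factor $2$ in the comparison; iterating to $\Psi^{-1}(K^n u)\ge 2^n\Psi^{-1}(u)$ and taking $n=2$ already gives $\Phi(\ell v)\ge K^2\Phi(v)=2\ell\,\Phi(v)$ with $\ell=K^{2}/2>1$, and the same iteration run backwards gives $\Psi(2s)\le(2C_2)^{2}\Psi(s)$ from (iii), so the sketch completes. Two small caveats you should make explicit in a final writeup: first, the inverse-function comparison presupposes that $\Psi$ is finite-valued and invertible, which fails for $\Phi\in\mathscr{U}$ with linear growth (e.g.\ $\Phi(t)=t$); in that degenerate case all three conditions fail simultaneously, but this needs to be said rather than assumed. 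Second, your caution about the alternative route through $a_\Phi$ is well placed: (iii) only yields an integrated lower-type estimate $\Phi(\lambda t)\gtrsim\lambda^{1+\log 2/\log C_2}\Phi(t)$, not the pointwise bound $\inf_t t\Phi'(t)/\Phi(t)>1$, so the duality argument is indeed the safer path.
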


\begin{lemme}\label{pro:main3aq7}
Let $\Phi \in \mathscr{C}^{1}(\mathbb{R}_{+})$ be a growth function such that  $0< a_\Phi \leq b_\Phi < \infty $. For $s >0$, consider $\Phi_{s}$ the function defined by
$$  \Phi_{s}(t)=\Phi\left(t^{s}\right), ~~\forall~t\geq 0.    $$
Then  $sa_\Phi \leq a_{\Phi_{s}} \leq b_{\Phi_{s}} \leq sb_\Phi.$ 
\end{lemme}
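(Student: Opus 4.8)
The goal is to prove Lemma~\ref{pro:main3aq7}: for a $\mathscr{C}^1$ growth function $\Phi$ with $0 < a_\Phi \le b_\Phi < \infty$, the function $\Phi_s(t) = \Phi(t^s)$ satisfies $s a_\Phi \le a_{\Phi_s} \le b_{\Phi_s} \le s b_\Phi$. The plan is to compute the quantity $\frac{t\Phi_s'(t)}{\Phi_s(t)}$ directly from the definitions and relate it to the corresponding quantity for $\Phi$.

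First I would differentiate. Since $\Phi_s(t) = \Phi(t^s)$ and $\Phi \in \mathscr{C}^1$, the chain rule gives $\Phi_s'(t) = s t^{s-1}\Phi'(t^s)$ for $t > 0$. Therefore
\begin{equation*}
\frac{t\Phi_s'(t)}{\Phi_s(t)} = \frac{t \cdot s t^{s-1}\Phi'(t^s)}{\Phi(t^s)} = s\,\frac{t^s\,\Phi'(t^s)}{\Phi(t^s)}.
\end{equation*}
The key observation is that the right-hand factor is exactly the defining expression $\frac{u\Phi'(u)}{\Phi(u)}$ for $\Phi$ evaluated at $u = t^s$.

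Next I would take infima and suprema over $t \in \mathbb{R}_+^*$. Because $t \mapsto t^s$ is a bijection of $\mathbb{R}_+^*$ onto itself (as $s > 0$), ranging $t$ over $\mathbb{R}_+^*$ is the same as ranging $u = t^s$ over $\mathbb{R}_+^*$. Hence
\begin{equation*}
a_{\Phi_s} = \inf_{t>0}\frac{t\Phi_s'(t)}{\Phi_s(t)} = s\inf_{u>0}\frac{u\Phi'(u)}{\Phi(u)} = s\,a_\Phi,
\end{equation*}
and likewise $b_{\Phi_s} = s\,b_\Phi$. This in fact yields the stronger conclusion $a_{\Phi_s} = s a_\Phi$ and $b_{\Phi_s} = s b_\Phi$, from which the stated inequalities $s a_\Phi \le a_{\Phi_s} \le b_{\Phi_s} \le s b_\Phi$ follow trivially (with the middle inequality being $a_{\Phi_s} \le b_{\Phi_s}$, which holds by definition of $\inf$ and $\sup$).

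I do not anticipate a serious obstacle here: the argument is a one-line chain-rule computation followed by a change of variable in the infimum/supremum. The only points requiring a word of care are that $\Phi_s$ remains a $\mathscr{C}^1$ growth function (continuity and monotonicity are preserved since $t\mapsto t^s$ is an increasing homeomorphism of $\mathbb{R}_+$, and $\Phi_s(0)=0$, $\Phi_s(\infty)=\infty$), so that the indices $a_{\Phi_s}, b_{\Phi_s}$ are well defined, and that one must restrict to $t>0$ throughout so that the differentiation and the substitution $u=t^s$ are valid. The hypothesis $0 < a_\Phi \le b_\Phi < \infty$ guarantees the resulting indices are finite and positive, matching property (i) of the $\mathscr{C}^1$ growth-function facts recalled above.
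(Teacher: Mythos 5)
Your proposal is correct and follows essentially the same route as the paper: a chain-rule computation showing $\frac{t\Phi_s'(t)}{\Phi_s(t)} = s\,\frac{t^s\Phi'(t^s)}{\Phi(t^s)}$, then passing to the infimum and supremum. Your extra remark that the substitution $u=t^s$ is a bijection, so that in fact $a_{\Phi_s}=s a_\Phi$ and $b_{\Phi_s}=s b_\Phi$ with equality, is a slight (and valid) sharpening of the stated inequalities.
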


\begin{proof}
For   $t>0$, we have
 $$  \left( \Phi_{s}(t)  \right)'= st^{s-1}\Phi'\left(t^{s}\right) \Rightarrow   \frac{t\left( \Phi_{s}(t)  \right)'}{\Phi_{s}(t)} = s\times \frac{t^{s}\Phi'\left(t^{s}\right)}{\Phi\left(t^{s}\right)}.   $$
It follows that 
$$  s a_\Phi \leq \frac{t\left( \Phi_{s}(t)  \right)'}{\Phi_{s}(t)} \leq  s b_\Phi, ~~\forall~t >0.       $$
\end{proof}

\begin{corollaire}\label{pro:main3aaqq7}
Let $s \geq 1$  and   $\Phi \in \mathscr{C}^{1}(\mathbb{R}_{+})$ a growth function such that  $0< a_\Phi \leq b_\Phi < \infty $. For  $t\geq 0$, put 
$$  \Phi_{s}(t)=\Phi\left(t^{s/a_\Phi}\right).    $$
The following assertions are satisfied.
\begin{itemize}
\item[(i)] If $s=1$ then $\Phi_{s} \in \mathscr{U}$.
\item[(ii)] If $s>1$ then  $\Phi_{s} \in \mathscr{U} \cap \nabla_{2}$.
\end{itemize}
\end{corollaire}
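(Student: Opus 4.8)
The plan is to reduce the whole statement to two facts already recorded in the excerpt: the index computation of Lemma~\ref{pro:main3aq7}, and the index characterizations of $\mathscr{U}$ and of $\mathscr{U}\cap\nabla_{2}$ listed just before Lemma~\ref{pro:main40l6aqlj} (namely $\Phi\in\mathscr{U}\iff 1\le a_\Phi\le b_\Phi<\infty$, and $\Phi\in\mathscr{U}\cap\nabla_{2}\iff 1<a_\Phi\le b_\Phi<\infty$, for $\Phi\in\mathscr{C}^{1}(\mathbb{R}_{+})$). Write $r:=s/a_\Phi>0$, so that $\Phi_{s}(t)=\Phi(t^{r})$ is precisely the function obtained by applying Lemma~\ref{pro:main3aq7} with dummy exponent $r$. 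The substance of the proof is then the trivial arithmetic $r a_\Phi=s$.

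First I would check that $\Phi_{s}$ is an admissible test function, i.e.\ a growth function in $\mathscr{C}^{1}(\mathbb{R}_{+})$, since the characterizations above are stated only for such functions. That $\Phi_{s}$ is a growth function is immediate: $t\mapsto t^{r}$ and $\Phi$ are continuous and nondecreasing, $\Phi_{s}(0)=\Phi(0)=0$, and $\Phi_{s}(t)\to\infty$, so $\Phi_{s}$ is continuous, nondecreasing and onto $\mathbb{R}_{+}$. On $(0,\infty)$ it is visibly $\mathscr{C}^{1}$, with $\Phi_{s}'(t)=r\,t^{r-1}\Phi'(t^{r})$. The one delicate point, which I expect to be the main (indeed the only) obstacle, is continuity of $\Phi_{s}'$ at the origin: using $\Phi'(u)\approx\Phi(u)/u$ together with property (iii) before Lemma~\ref{pro:main40l6aqlj} (that $t\mapsto\Phi(t)/t^{a_\Phi}$ is increasing, hence $\Phi(t^{r})\lesssim t^{r a_\Phi}=t^{s}$ for small $t$), one gets $\Phi_{s}'(t)\lesssim t^{s-1}$ near $0$, which stays bounded for $s\ge 1$ and tends to $0$ when $s>1$; this lets $\Phi_{s}'$ extend continuously to $0$.

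Next I would invoke Lemma~\ref{pro:main3aq7} with the exponent $r=s/a_\Phi$ (its hypotheses require only $\Phi\in\mathscr{C}^{1}(\mathbb{R}_{+})$ with $0<a_\Phi\le b_\Phi<\infty$, which is exactly what we assume), obtaining
$$ s = r\,a_\Phi \;\le\; a_{\Phi_{s}} \;\le\; b_{\Phi_{s}} \;\le\; r\,b_\Phi \;=\; \frac{s\,b_\Phi}{a_\Phi} \;<\; \infty. $$
Here $0<a_\Phi\le b_\Phi<\infty$ is precisely what guarantees that both ends are finite and positive.

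Finally I would read off the two cases from the stored characterization applied to $\Phi_{s}$. If $s=1$, the display gives $1\le a_{\Phi_{s}}\le b_{\Phi_{s}}<\infty$, so $\Phi_{s}\in\mathscr{U}$, which is assertion~(i). If $s>1$, then $a_{\Phi_{s}}\ge s>1$, hence $1<a_{\Phi_{s}}\le b_{\Phi_{s}}<\infty$, and the sharper characterization yields $\Phi_{s}\in\mathscr{U}\cap\nabla_{2}$, which is assertion~(ii). No further work is needed beyond the smoothness check of the first step and the elementary bound $ra_\Phi=s$.
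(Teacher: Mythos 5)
Your proof is correct and is essentially the argument the paper intends: the corollary is stated without proof precisely because it follows from Lemma \ref{pro:main3aq7} applied with exponent $s/a_\Phi$ (giving $s\le a_{\Phi_s}\le b_{\Phi_s}\le s\,b_\Phi/a_\Phi<\infty$) together with the index characterizations $\Phi\in\mathscr{U}\iff 1\le a_\Phi\le b_\Phi<\infty$ and $\Phi\in\mathscr{U}\cap\nabla_2\iff 1<a_\Phi\le b_\Phi<\infty$. Your additional check that $\Phi_s$ is an admissible $\mathscr{C}^1$ growth function is a reasonable extra precaution that the paper omits.
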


\begin{proposition}\label{pro:main4paqm4}
Let  $\Phi_{1},\Phi_{2} \in \mathscr{C}^{1}(\mathbb{R}_{+})$ be two growth functions such that  $0< a_{\Phi_{1}}\leq b_{\Phi_{1}} <\infty$ and  $0< a_{\Phi_{2}}\leq b_{\Phi_{2}} <\infty$. Then  $\Phi_{1} \circ \Phi_{2} \in \mathscr{C}^{1}(\mathbb{R}_{+})$ growth function and  $$ a_{\Phi_{1}}a_{\Phi_{2}} \leq a_{\Phi_{1}\circ \Phi_{2}}\leq b_{\Phi_{1}\circ \Phi_{2}}\leq b_{\Phi_{1}}b_{\Phi_{2}}.$$
\end{proposition}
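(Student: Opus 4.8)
The plan is to check directly that $G:=\Phi_{1}\circ\Phi_{2}$ is a $\mathscr{C}^{1}$ growth function and then to read off the two index estimates from a pointwise factorization of the logarithmic derivative $tG'(t)/G(t)$.

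First I would record the elementary structural facts. As a composition of two $\mathscr{C}^{1}$ maps, $G$ is $\mathscr{C}^{1}$ on $\mathbb{R}_{+}$; as a composition of two continuous, nondecreasing surjections of $\mathbb{R}_{+}$ onto itself, it is again continuous, nondecreasing and onto, with $G(0)=\Phi_{1}(\Phi_{2}(0))=\Phi_{1}(0)=0$ and $G(t)\to+\infty$ as $t\to+\infty$. Hence $G$ is a growth function. Moreover, since $a_{\Phi_{1}},a_{\Phi_{2}}>0$, both $\Phi_{1}$ and $\Phi_{2}$ are homeomorphisms of $\mathbb{R}_{+}$, so $\Phi_{2}(t)>0$ and $G(t)>0$ for every $t>0$; in particular the quotients defining the indices are well defined.

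The heart of the argument is the chain rule. For $t>0$ one has $G'(t)=\Phi_{1}'(\Phi_{2}(t))\,\Phi_{2}'(t)$, and I would rewrite the logarithmic derivative as
\begin{equation*}
\frac{tG'(t)}{G(t)}
=\frac{\Phi_{2}(t)\,\Phi_{1}'(\Phi_{2}(t))}{\Phi_{1}(\Phi_{2}(t))}\cdot\frac{t\,\Phi_{2}'(t)}{\Phi_{2}(t)}.
\end{equation*}
Setting $s=\Phi_{2}(t)>0$, the first factor equals $s\Phi_{1}'(s)/\Phi_{1}(s)$, which by definition lies in $[a_{\Phi_{1}},b_{\Phi_{1}}]$, while the second factor lies in $[a_{\Phi_{2}},b_{\Phi_{2}}]$. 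Because all four indices are strictly positive, both factors are positive, so their product satisfies
\begin{equation*}
a_{\Phi_{1}}a_{\Phi_{2}}\le\frac{tG'(t)}{G(t)}\le b_{\Phi_{1}}b_{\Phi_{2}}
\qquad\text{for all }t>0.
\end{equation*}
Taking the infimum and the supremum over $t>0$ then yields $a_{\Phi_{1}}a_{\Phi_{2}}\le a_{G}$ and $b_{G}\le b_{\Phi_{1}}b_{\Phi_{2}}$, and the trivial inequality $a_{G}\le b_{G}$ completes the chain.

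I do not expect any genuine obstacle here: the statement reduces to one application of the chain rule followed by a pointwise product estimate, in the same spirit as the proofs of Lemma \ref{pro:main3aq7} and Corollary \ref{pro:main3aaqq7}. The only point requiring a word of care is the positivity of the two factors, which is what makes it legitimate to bound a product by the product of the bounds; this is guaranteed precisely by the hypothesis $a_{\Phi_{1}},a_{\Phi_{2}}>0$, and the same hypothesis also ensures that $s=\Phi_{2}(t)$ is a genuine positive argument at which the first factor is controlled by the indices of $\Phi_{1}$.
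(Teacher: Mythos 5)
Your argument is correct and is essentially identical to the paper's proof: both apply the chain rule, factor the logarithmic derivative $t(\Phi_{1}\circ\Phi_{2})'(t)/(\Phi_{1}\circ\Phi_{2})(t)$ as the product of $\Phi_{2}(t)\Phi_{1}'(\Phi_{2}(t))/\Phi_{1}(\Phi_{2}(t))$ and $t\Phi_{2}'(t)/\Phi_{2}(t)$, and bound each factor by the corresponding indices. Your additional remarks on positivity and on $G$ being a growth function are fine but only make explicit what the paper leaves implicit.
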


\begin{proof}
For $t>0$, we have  $$   \left(\Phi_{1} \circ \Phi_{2}\right)'(t)=\Phi_{1}'\left(\Phi_{2}(t) \right)\Phi_{2}'(t) \Rightarrow  \frac{t\left(\Phi_{1} \circ \Phi_{2}\right)'(t)}{\Phi_{1} \circ \Phi_{2}(t)} =\frac{\Phi_{2}(t)\Phi_{1}'\left(\Phi_{2}(t) \right)}{\Phi_{1}\left( \Phi_{2}(t) \right)} \times \frac{t\Phi_{2}'(t)}{\Phi_{2}(t)}.    $$
It follows that
$$ a_{\Phi_{1}}a_{\Phi_{2}} \leq \frac{t\left(\Phi_{1} \circ \Phi_{2}\right)'(t)}{\Phi_{1} \circ \Phi_{2}(t)} \leq  b_{\Phi_{1}}b_{\Phi_{2}},~~\forall~t>0. $$
\end{proof}

\begin{proposition}\label{pro:main4pm4}
Let  $\Phi \in \mathscr{C}^{1}(\mathbb{R}_{+})$ a growth function. The following assertions are equivalent.
\begin{itemize}
\item[(i)]  $0< a_\Phi\leq b_\Phi <\infty$.
\item[(ii)]  $0< a_{\Phi^{-1}}\leq b_{\Phi^{-1}} <\infty$. 
\end{itemize}
Moreover, $ a_{\Phi^{-1}}=1/b_{\Phi}\hspace*{0.5cm}\textrm{and}  \hspace*{0.5cm} b_{\Phi^{-1}}=1/a_{\Phi}.     $
\end{proposition}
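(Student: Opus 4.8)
The plan is to set $\Psi := \Phi^{-1}$ and to compute the ratio $\frac{s\Psi'(s)}{\Psi(s)}$ directly from the inverse function theorem, then to read off the indices of $\Psi$ by an order-reversing change of variables. This single computation simultaneously yields the equivalence and the two explicit formulas.

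First I would record that assertion (i) forces $\Phi$ to be a $C^{1}$-diffeomorphism of $\mathbb{R}_{+}^{*}$ onto itself. Indeed, since $a_\Phi=\inf_{t>0}\frac{t\Phi'(t)}{\Phi(t)}>0$, the function $\Phi$ cannot vanish on any interval $(0,t_{0})$ (otherwise the defining ratio would be $0/0$ there), so $\Phi(t)>0$ for all $t>0$; combined with the inequality $t\Phi'(t)\geq a_\Phi\,\Phi(t)>0$ this gives $\Phi'(t)>0$ on $\mathbb{R}_{+}^{*}$. Hence $\Phi$ is a strictly increasing $C^{1}$ growth function, and because a growth function maps $\mathbb{R}_{+}$ onto $\mathbb{R}_{+}$, its inverse $\Psi=\Phi^{-1}$ is a well-defined $C^{1}$ growth function.

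Next, by the inverse function theorem, for $s>0$ and $t:=\Psi(s)$ (so that $s=\Phi(t)$) one has $\Psi'(s)=1/\Phi'(t)$, whence
$$\frac{s\Psi'(s)}{\Psi(s)}=\frac{\Phi(t)}{t\Phi'(t)}=\left(\frac{t\Phi'(t)}{\Phi(t)}\right)^{-1}.$$
Since $t\mapsto\Phi(t)$ is a bijection of $\mathbb{R}_{+}^{*}$ onto itself, as $s$ ranges over $\mathbb{R}_{+}^{*}$ the point $t=\Psi(s)$ ranges over all of $\mathbb{R}_{+}^{*}$; and $x\mapsto 1/x$ reverses order on $\mathbb{R}_{+}^{*}$. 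Taking infimum and supremum over $s>0$ therefore gives $a_{\Psi}=1/b_{\Phi}$ and $b_{\Psi}=1/a_{\Phi}$, which are finite and positive precisely because (i) holds. This proves (i)$\Rightarrow$(ii) together with the two stated identities. For the converse (ii)$\Rightarrow$(i), I would apply the identical argument to $\Psi$ in place of $\Phi$, using $\Psi^{-1}=\Phi$, to obtain $a_{\Phi}=1/b_{\Psi}$ and $b_{\Phi}=1/a_{\Psi}$; since (ii) supplies $0<a_{\Psi}\leq b_{\Psi}<\infty$, these reciprocals are finite and positive, giving (i).

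The computation itself is immediate, so the only genuine point requiring care is the bookkeeping in the first step: verifying that $a_\Phi>0$ really does guarantee strict positivity and strict monotonicity of $\Phi$, so that $\Psi$ is a bona fide $C^{1}$ inverse, and checking that the substitution $s=\Phi(t)$ loses no part of the range $\mathbb{R}_{+}^{*}$. This is where one must invoke that $\Phi$ is \emph{onto} $\mathbb{R}_{+}$, a defining feature of growth functions, so that the infimum and supremum defining $a_{\Psi}$ and $b_{\Psi}$ are taken over exactly the reciprocals of the values of $t\mapsto\frac{t\Phi'(t)}{\Phi(t)}$, with nothing added or omitted.
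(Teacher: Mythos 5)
Your proof is correct and follows essentially the same route as the paper: both compute $(\Phi^{-1})'(s)=1/\Phi'(\Phi^{-1}(s))$ and read off the indices of $\Phi^{-1}$ from the reciprocal of the ratio $t\Phi'(t)/\Phi(t)$. The only (harmless) difference is that you obtain the exact identities $a_{\Phi^{-1}}=1/b_{\Phi}$ and $b_{\Phi^{-1}}=1/a_{\Phi}$ in one step from the bijectivity of $\Phi$ and the order reversal of $x\mapsto 1/x$, whereas the paper deduces them by combining the two sandwich inequalities coming from the implications (i)$\Rightarrow$(ii) and (ii)$\Rightarrow$(i).
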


\begin{proof}
Show that $i)$ implies $ii)$.
We have  $$   \left(\Phi^{-1}\right)'(t)=\frac{1}{\Phi'\left(\Phi^{-1}(t) \right)}, ~~\forall~t>0.      $$
It follows that
\begin{align*}
0< a_\Phi\leq b_\Phi <\infty &\Rightarrow 0< a_\Phi \leq \frac{t\Phi'(t)}{\Phi(t)} \leq b_\Phi <\infty, ~~\forall~t>0 \\
&\Rightarrow 0< a_\Phi \leq \frac{\Phi^{-1}(t)\Phi'\left(\Phi^{-1}(t) \right)}{\Phi\left(\Phi^{-1}(t)\right)} \leq b_\Phi <\infty, ~~\forall~t>0 \\
&\Rightarrow \frac{1}{b_\Phi} \leq \frac{t}{\Phi^{-1}(t)\Phi'\left(\Phi^{-1}(t) \right)} \leq \frac{1}{a_\Phi}, ~~\forall~t>0 \\
&\Rightarrow \frac{1}{b_\Phi} \leq \frac{t\left(\Phi^{-1}\right)'(t)}{\Phi^{-1}(t)} \leq \frac{1}{a_\Phi}, ~~\forall~t>0.
\end{align*}
We deduce on the one hand that
  \begin{equation}\label{eq:56aqi8n}
\frac{1}{b_{\Phi}} \leq a_{\Phi^{-1}} \leq  b_{\Phi^{-1}} \leq \frac{1}{a_{\Phi}}.\end{equation}
Reasoning as above, we obtain that (ii) implies (i) and we deduce on the other hand that
\begin{equation}\label{eq:56aqn}
\frac{1}{b_{\Phi^{-1}}} \leq a_{\Phi} \leq  b_{\Phi} \leq \frac{1}{a_{\Phi^{-1}}}.\end{equation}
From the Relations (\ref{eq:56aqi8n}) and (\ref{eq:56aqn}) we conclude that
 $a_{\Phi^{-1}}=1/b_{\Phi}$ and  $b_{\Phi^{-1}}=1/a_{\Phi}$.
\end{proof}

\begin{proposition}\label{pro:main4q4}
Let  $\Phi_{1}, \Phi_{2} \in \mathscr{L} \cup\mathscr{U}$. The following assertions are equivalent.
\begin{itemize}
\item[(i)] The function  $t\mapsto\frac{\Phi_{2}(t)}{\Phi_{1}(t)}$ is non-decreasing on $\mathbb{R}_{+}^{*}$.
\item[(ii)] The function  $t\mapsto\frac{\Phi_{2}\circ \Phi_{1}^{-1}(t)}{t}$ is non-decreasing on $\mathbb{R}_{+}^{*}$.
\item[(iii)] The function $\Phi_{2}\circ \Phi_{1}^{-1}$ belongs  $\mathscr{U}^{b_{\Phi_{2}}/a_{\Phi_{1}}}$.
\end{itemize}
\end{proposition}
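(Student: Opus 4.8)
The plan is to establish the two equivalences $(i)\Leftrightarrow(ii)$ and $(ii)\Leftrightarrow(iii)$ separately: the first is a change of variable, and the second rests entirely on the index estimates already proved in Proposition~\ref{pro:main4pm4} and Proposition~\ref{pro:main4paqm4} together with the recalled monotonicity properties of growth functions.

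For $(i)\Leftrightarrow(ii)$, I would use that $\Phi_{1}\in\mathscr{L}\cup\mathscr{U}$ is a homeomorphism of $\mathbb{R}_{+}$ onto $\mathbb{R}_{+}$ with $a_{\Phi_{1}}>0$, hence strictly increasing. The substitution $u=\Phi_{1}(t)$, $t=\Phi_{1}^{-1}(u)$ gives
$$ \frac{\Phi_{2}(t)}{\Phi_{1}(t)}=\frac{\Phi_{2}\left(\Phi_{1}^{-1}(u)\right)}{u}=\frac{\Phi_{2}\circ\Phi_{1}^{-1}(u)}{u}. $$
Writing $g(t)=\Phi_{2}(t)/\Phi_{1}(t)$ and $h(u)=\Phi_{2}\circ\Phi_{1}^{-1}(u)/u$, this identity reads $h=g\circ\Phi_{1}^{-1}$ and $g=h\circ\Phi_{1}$. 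Since $\Phi_{1}$ and $\Phi_{1}^{-1}$ are both increasing, composition preserves monotonicity, so $g$ is non-decreasing on $\mathbb{R}_{+}^{*}$ if and only if $h$ is, which is precisely the equivalence of $(i)$ and $(ii)$.

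The implication $(iii)\Rightarrow(ii)$ is immediate from the definition of $\mathscr{U}^{p}$: every member of $\mathscr{U}^{b_{\Phi_{2}}/a_{\Phi_{1}}}$ is by definition a growth function for which $t\mapsto\Phi(t)/t$ is non-decreasing, and for $\Phi=\Phi_{2}\circ\Phi_{1}^{-1}$ this is exactly assertion $(ii)$. For the converse $(ii)\Rightarrow(iii)$, I would first record that $\Phi_{2}\circ\Phi_{1}^{-1}$ is a $\mathscr{C}^{1}$ growth function by Proposition~\ref{pro:main4paqm4}, and combine Proposition~\ref{pro:main4pm4} (giving $a_{\Phi_{1}^{-1}}=1/b_{\Phi_{1}}$ and $b_{\Phi_{1}^{-1}}=1/a_{\Phi_{1}}$) with the composition bound to obtain
$$ \frac{a_{\Phi_{2}}}{b_{\Phi_{1}}}\leq a_{\Phi_{2}\circ\Phi_{1}^{-1}}\leq b_{\Phi_{2}\circ\Phi_{1}^{-1}}\leq\frac{b_{\Phi_{2}}}{a_{\Phi_{1}}}. $$
Next, assuming $(ii)$, the $\mathscr{C}^{1}$ function $t\mapsto\Phi_{2}\circ\Phi_{1}^{-1}(t)/t$ is non-decreasing, so its derivative is non-negative, i.e. $t\,(\Phi_{2}\circ\Phi_{1}^{-1})'(t)\geq\Phi_{2}\circ\Phi_{1}^{-1}(t)$ for all $t>0$; taking the infimum yields $a_{\Phi_{2}\circ\Phi_{1}^{-1}}\geq 1$, and hence $b_{\Phi_{2}}/a_{\Phi_{1}}\geq b_{\Phi_{2}\circ\Phi_{1}^{-1}}\geq 1$, so the exponent defining $\mathscr{U}^{b_{\Phi_{2}}/a_{\Phi_{1}}}$ is admissible. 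Finally, since $0<a_{\Phi_{2}\circ\Phi_{1}^{-1}}\leq b_{\Phi_{2}\circ\Phi_{1}^{-1}}<\infty$, the recalled property that $t\mapsto\Phi(t)/t^{b_{\Phi}}$ is decreasing shows that $\Phi_{2}\circ\Phi_{1}^{-1}$ is of upper-type $b_{\Phi_{2}\circ\Phi_{1}^{-1}}$, and a fortiori of upper-type $b_{\Phi_{2}}/a_{\Phi_{1}}$. Together with the non-decrease of $t\mapsto\Phi_{2}\circ\Phi_{1}^{-1}(t)/t$ from $(ii)$, this gives $\Phi_{2}\circ\Phi_{1}^{-1}\in\mathscr{U}^{b_{\Phi_{2}}/a_{\Phi_{1}}}$.

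The main obstacle is the bookkeeping in $(ii)\Rightarrow(iii)$: one must first confirm that the exponent $b_{\Phi_{2}}/a_{\Phi_{1}}$ is at least $1$ (which is exactly where $(ii)$ is used, to force $a_{\Phi_{2}\circ\Phi_{1}^{-1}}\geq 1$), and then pass from upper-type $b_{\Phi_{2}\circ\Phi_{1}^{-1}}$ to the larger upper-type $b_{\Phi_{2}}/a_{\Phi_{1}}$, using that upper-type $p$ implies upper-type $q$ whenever $q\geq p$ because $s^{p}\leq s^{q}$ for $s\geq 1$. Everything else reduces to the change of variable and to unwinding the definition of $\mathscr{U}^{p}$.
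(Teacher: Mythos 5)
Your proposal is correct, and the two easy parts $(i)\Leftrightarrow(ii)$ (change of variable $u=\Phi_{1}(t)$) and $(iii)\Rightarrow(ii)$ (unwinding the definition) coincide with what the paper declares obvious. For the substantive implication $(ii)\Rightarrow(iii)$, however, you take a genuinely different route. The paper argues directly: since $t\mapsto\Phi_{1}^{-1}(t)/t^{1/a_{\Phi_{1}}}$ and $t\mapsto\Phi_{2}(t)/t^{b_{\Phi_{2}}}$ are non-increasing, one gets for $s>0$, $t\geq 1$ first $\Phi_{1}^{-1}(st)\leq t^{1/a_{\Phi_{1}}}\Phi_{1}^{-1}(s)$ and then $\Phi_{2}\bigl(t^{1/a_{\Phi_{1}}}\Phi_{1}^{-1}(s)\bigr)\leq t^{b_{\Phi_{2}}/a_{\Phi_{1}}}\Phi_{2}\bigl(\Phi_{1}^{-1}(s)\bigr)$, which chained together give the upper-type inequality with constant $C_{p}=1$ in one stroke. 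You instead compute the indices of the composition via Propositions~\ref{pro:main4paqm4} and~\ref{pro:main4pm4} to get $a_{\Phi_{2}}/b_{\Phi_{1}}\leq a_{\Phi_{2}\circ\Phi_{1}^{-1}}\leq b_{\Phi_{2}\circ\Phi_{1}^{-1}}\leq b_{\Phi_{2}}/a_{\Phi_{1}}$, convert the upper-index bound into an upper-type estimate via the recalled monotonicity of $t\mapsto\Phi(t)/t^{b_{\Phi}}$, and enlarge the exponent. Both arguments ultimately rest on the same monotonicity facts, so the difference is one of packaging: the paper's version is shorter and self-contained, while yours has the merit of explicitly verifying that $b_{\Phi_{2}}/a_{\Phi_{1}}\geq 1$ (needed for $\mathscr{U}^{b_{\Phi_{2}}/a_{\Phi_{1}}}$ to be well defined, and derived from $(ii)$ via $a_{\Phi_{2}\circ\Phi_{1}^{-1}}\geq 1$), a point the paper leaves implicit. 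Your argument is complete and compatible with the paper's framework.
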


\begin{proof}
The equivalence between  (i) and (ii) is obvious.
That (iii) implies (ii) is also immediate.
\vskip .1cm
Let us now show that (ii) implies (iii).
\vskip .1cm
Since the functions  $t\mapsto \frac{\Phi_{1}^{-1}(t)}{t^{1/a_{\Phi_{1}}}}$ and  $t\mapsto \frac{\Phi_{2}(t)}{t^{b_{\Phi_{2}}}}$ are non-increasing on $\mathbb{R}_{+}^{*}$, we deduce that for all
 $s>0$ and $t \geq 1$
$$  \Phi_{1}^{-1}(st) \leq t^{1/a_{\Phi_{1}}} \Phi_{1}^{-1}(s)  $$ 
and 
$$ \Phi_{2}\left(  t^{1/a_{\Phi_{1}}} \Phi_{1}^{-1}(s)  \right)  \leq t^{b_{\Phi_{2}}/a_{\Phi_{1}}}\Phi_{2}\left( \Phi_{1}^{-1}(s)  \right).  $$
 It follows that $$ \Phi_{2}\left(  \Phi_{1}^{-1}(st)  \right)   \leq  t^{b_{\Phi_{2}}/a_{\Phi_{1}}}\Phi_{2}\left( \Phi_{1}^{-1}(s)  \right). $$
\end{proof}

\begin{proposition}\label{pro:main44}
Let  $\Phi$ be a growth function such that  $\Phi(t)>0$  for all $t>0$. Consider $\widetilde{\Omega}$ the function defined by
$$ \widetilde{\Omega}(t)=\frac{1}{ \Phi\left(\frac{1}{t}\right)}, ~\forall~t>0 \hspace*{0.5cm}\textrm{and} \hspace*{0.5cm}  \widetilde{\Omega}(0)=0.      $$
The following assertions are satisfied. 
\begin{itemize}
\item[(i)]  $\Phi \in \mathscr{U}^{q}$ (resp.  $\mathscr{L}_{p}$) if and only if  $\widetilde{\Omega} \in \mathscr{U}^{q}$  (resp.  $\mathscr{L}_{p}$).
\item[(ii)]  $\Phi \in \mathscr{U}\cap \nabla_{2}$ if and only if  $\widetilde{\Omega} \in \mathscr{U}\cap \nabla_{2}$.
\end{itemize}
\end{proposition}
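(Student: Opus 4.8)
The plan is to exploit the fact that the transform $T[\Phi](t):=1/\Phi(1/t)$ (so that $\widetilde{\Omega}=T[\Phi]$) is an involution on the class of growth functions that are strictly positive on $\mathbb{R}_{+}^{*}$: indeed $T[T[\Phi]](t)=1/T[\Phi](1/t)=\Phi(t)$. Because of this, each stated equivalence reduces to a single forward implication, the reverse one being obtained by applying the forward implication to $\widetilde{\Omega}$ and using $T[\widetilde{\Omega}]=\Phi$. I would first record the preliminary that $\widetilde{\Omega}$ is itself a growth function: since $\Phi(t)>0$ for $t>0$, $\Phi$ is a homeomorphism of $\mathbb{R}_{+}$, so $t\mapsto\Phi(1/t)$ is continuous, positive and nonincreasing on $\mathbb{R}_{+}^{*}$; hence $\widetilde{\Omega}$ is continuous and nondecreasing with $\widetilde{\Omega}(t)\to 0$ as $t\to 0^{+}$ and $\widetilde{\Omega}(t)\to\infty$ as $t\to\infty$.

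For the upper-type part of (i), suppose $\Phi\in\mathscr{U}^{q}$, fix $t\geq 1$ and $s>0$. The whole computation rests on the identity $\frac{1}{s}=t\cdot\frac{1}{st}$, so that applying the upper-type $q$ inequality for $\Phi$ at the base point $\frac{1}{st}$ gives
\[ \Phi\Big(\frac{1}{s}\Big)=\Phi\Big(t\cdot\frac{1}{st}\Big)\leq C\,t^{q}\,\Phi\Big(\frac{1}{st}\Big). \]
Taking reciprocals yields $\widetilde{\Omega}(st)\leq C\,t^{q}\,\widetilde{\Omega}(s)$, i.e.\ $\widetilde{\Omega}$ is of upper-type $q$. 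For the monotonicity, I write $\widetilde{\Omega}(t)/t=1/\big(t\,\Phi(1/t)\big)$ and set $b=1/t$, so $t\,\Phi(1/t)=\Phi(b)/b$; since $\Phi\in\mathscr{U}$ means $b\mapsto\Phi(b)/b$ is nondecreasing, the map $t\mapsto t\,\Phi(1/t)$ is nonincreasing, hence $t\mapsto\widetilde{\Omega}(t)/t$ is nondecreasing, proving $\widetilde{\Omega}\in\mathscr{U}^{q}$. The lower-type case is identical: replacing ``$t\geq 1$'' by ``$0<t\leq 1$'' in the displayed line shows $\Phi\in\mathscr{L}_{p}$ forces $\widetilde{\Omega}$ to be of lower-type $p$, while the same $b=1/t$ substitution turns the nonincreasing monotonicity of $\Phi(b)/b$ into the nonincreasing monotonicity of $\widetilde{\Omega}(t)/t$. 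With the involution this gives (i).

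For (ii), part (i) already yields $\Phi\in\mathscr{U}\Leftrightarrow\widetilde{\Omega}\in\mathscr{U}$, so it remains only to transfer the $\nabla_{2}$ condition, both functions now lying in $\mathscr{U}$. Since the definition of $\nabla_{2}$ via complementary functions does not transform transparently under $T$, I would route through the pointwise characterization of Lemma \ref{pro:main40l6aqlj}(iii): for $\Psi\in\mathscr{U}$, one has $\Psi\in\nabla_{2}$ if and only if there is $C>1$ with $\Psi(t)\leq\frac{1}{2C}\Psi(Ct)$ for all $t>0$. Assuming this for $\Phi$ with constant $C$ and setting $s=1/(Ct)$, so that $1/t=Cs$, the inequality $\Phi(s)\leq\frac{1}{2C}\Phi(Cs)$ reads, after taking reciprocals, exactly as $\widetilde{\Omega}(t)\leq\frac{1}{2C}\widetilde{\Omega}(Ct)$. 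Lemma \ref{pro:main40l6aqlj} applied to $\widetilde{\Omega}$ then gives $\widetilde{\Omega}\in\nabla_{2}$, and the reverse direction follows once more from the involution.

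The one genuine subtlety — the step I would flag as the crux — is (ii): one must not attempt to push the complementary function through $T$, but instead recognize that the self-dual pointwise inequality (iii) of Lemma \ref{pro:main40l6aqlj} is precisely the form invariant under the reciprocal substitution $s\leftrightarrow 1/s$. Everything else is a mechanical consequence of the single identity $\frac{1}{s}=t\cdot\frac{1}{st}$ together with the involutivity of $T$.
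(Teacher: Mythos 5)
Your proof is correct and follows essentially the same route as the paper's: the identity $\frac{1}{s}=t\cdot\frac{1}{st}$ to transfer the upper/lower type condition, the substitution $b=1/t$ to transfer the monotonicity of $\Phi(t)/t$, and the pointwise characterization (iii) of Lemma \ref{pro:main40l6aqlj} to transfer $\nabla_{2}$. The only (harmless) difference is that you make the involutivity of $T$ explicit to dispose of the converses, where the paper simply states that they are "obtained similarly."
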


\begin{proof}
$i)$ Suppose that  $\Phi \in \mathscr{U}^{q}$.
For $0<t_{1}\leq t_{2}$, we have 
$$ \frac{\Phi(t_{1})}{t_{1}} \leq \frac{\Phi(t_{2})}{t_{2}} 
\Leftrightarrow  \frac{\Phi\left(1/t_{2}\right)}{1/t_{2}} \leq \frac{\Phi\left(1/t_{1}\right)}{1/t_{1}} 
\Leftrightarrow \frac{1}{t_{1}}\frac{1}{\Phi\left(1/t_{1}\right)} \leq \frac{1}{t_{2}}\frac{1}{\Phi\left(1/t_{2}\right)}
\Leftrightarrow \frac{\widetilde{\Omega}(t_{1})}{t_{1}} \leq \frac{\widetilde{\Omega}(t_{2})}{t_{2}}.
     $$
Since $\Phi$ is of upper type $q$ then so is the function $\widetilde{\Omega}$. Indeed, for all $s> 0$ and $t\geq 1$
$$ \Phi\left(\frac{1}{s}\right)=\Phi\left(t \times\frac{1}{st}\right)\leq C_{q}t^{q}\Phi\left(\frac{1}{st}\right) \Rightarrow
\frac{1}{C_{q}t^{q}\Phi\left(\frac{1}{st}\right)} \leq  \frac{1}{\Phi\left(\frac{1}{s}\right)} 
\Rightarrow  \widetilde{\Omega}(st) \leq C_{q}t^{q}\widetilde{\Omega}(s).
      $$
The converse is obtained similarly. We conclude that  $\Phi \in \mathscr{U}^{q}$ if and only if $\widetilde{\Omega} \in \mathscr{U}^{q}$.\\
Reasoning in the same way, we also show that  $\Phi \in \mathscr{L}_{p}$ if and only if $\widetilde{\Omega} \in \mathscr{L}_{p}$.\\
(ii) We suppose that  $\Phi \in \mathscr{U}\cap \nabla_{2}$. 
For $t>0$, we have
$$   \Phi\left( \frac{1}{t}\right) \leq\frac{1}{2C} \Phi\left( \frac{C}{t}\right) \Rightarrow  \frac{2C}{\Phi\left( \frac{C}{t}\right)} \leq \frac{1}{\Phi\left( \frac{1}{t}\right)} \Rightarrow 2C\widetilde{\Omega}\left( \frac{t}{C}\right) \leq \widetilde{\Omega}(t), $$
according to the Lemma \ref{pro:main40l6aqlj}. We deduce that  $\widetilde{\Omega} \in \mathscr{U}\cap \nabla_{2}$.\\
The converse is obtained similarly.
\end{proof}

\begin{lemme}\label{pro:main40}
Let  $\Phi_{1}, \Phi_{2} \in \mathscr{L} \cup\mathscr{U}$ and put 
$$  \widetilde{\Omega}_{3}(t)=\frac{1}{\Phi_{2}\circ \Phi_{1}^{-1}\left(\frac{1}{t}\right)}, ~\forall~t>0 \hspace*{0.5cm}\textrm{and} \hspace*{0.5cm}  \widetilde{\Omega}_{3}(0)=0.    $$
If the function  $t\mapsto\frac{\Phi_{2}(t)}{\Phi_{1}(t)}$ is non-decreasing on $\mathbb{R}_{+}^{*}$ then  $\widetilde{\Omega}_{3} \in \mathscr{U}$.
\end{lemme}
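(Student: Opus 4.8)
The plan is to recognize $\widetilde{\Omega}_3$ as a special instance of the construction in Proposition \ref{pro:main44}, applied to the composition $\Phi:=\Phi_2\circ\Phi_1^{-1}$, and then to chain together Propositions \ref{pro:main4q4} and \ref{pro:main44}. The entire content is carried by these two earlier results, so the proof is essentially a matter of checking hypotheses and identifying the right objects.

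First I would verify the setup. Since $\Phi_1,\Phi_2\in\mathscr{L}\cup\mathscr{U}$, each is a homeomorphism of $\mathbb{R}_+$ onto itself; hence $\Phi_1^{-1}$ is a well-defined growth function and $\Phi:=\Phi_2\circ\Phi_1^{-1}$ is again a growth function with $\Phi(t)>0$ for every $t>0$. Thus the standing hypothesis of Proposition \ref{pro:main44} is met by $\Phi$, and by construction
$$ \widetilde{\Omega}_3(t)=\frac{1}{\Phi\left(\tfrac{1}{t}\right)},\quad t>0,\qquad \widetilde{\Omega}_3(0)=0, $$
so $\widetilde{\Omega}_3$ is precisely the function $\widetilde{\Omega}$ associated to $\Phi$ in that proposition.

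Next I would feed in the monotonicity hypothesis. By the implication (i)$\Rightarrow$(iii) of Proposition \ref{pro:main4q4}, the assumption that $t\mapsto\Phi_2(t)/\Phi_1(t)$ is non-decreasing on $\mathbb{R}_+^{*}$ yields $\Phi=\Phi_2\circ\Phi_1^{-1}\in\mathscr{U}^{q}$ with $q=b_{\Phi_2}/a_{\Phi_1}$. Finally, applying the forward direction of Proposition \ref{pro:main44}(i) to $\Phi\in\mathscr{U}^{q}$ gives $\widetilde{\Omega}_3=\widetilde{\Omega}\in\mathscr{U}^{q}\subseteq\mathscr{U}$, which is the desired conclusion. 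There is no genuine obstacle here: the only points requiring care are confirming that $\Phi_2\circ\Phi_1^{-1}$ is a legitimate strictly positive growth function (so that Proposition \ref{pro:main44} is applicable) and correctly identifying $\widetilde{\Omega}_3$ with the $\widetilde{\Omega}$ built from it.
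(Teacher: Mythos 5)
Your proof is correct and follows exactly the paper's route: the paper's own proof is the one-line remark that the lemma follows from Proposition \ref{pro:main4q4} and Proposition \ref{pro:main44}, and you have simply (and accurately) filled in the identification $\Phi=\Phi_2\circ\Phi_1^{-1}$ and the chaining of the two results.
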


\begin{proof}
The proof  follows from Proposition \ref{pro:main4q4} and Proposition \ref{pro:main44}.

\end{proof}

\begin{lemme}\label{pro:main80jdf}
Let $\Phi\in \widetilde{\mathscr{L}}\cup\widetilde{\mathscr{U}}$. There exists a constant $C >0$ such that 
\begin{equation}\label{eq:ibetdfide1}
\Phi\left(\frac{s}{t}\right)\leq C\frac{\Phi(s)}{\Phi(t)}, ~~ \forall~s,t >0 
.\end{equation}
\end{lemme}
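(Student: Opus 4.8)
The plan is to deduce \eqref{eq:ibetdfide1} from a two-sided power comparison for $\Phi$. I treat the convex case $\Phi\in\widetilde{\mathscr U}^{q}$ in detail, the concave case $\Phi\in\widetilde{\mathscr L}_{p}$ being entirely parallel with the roles of the upper and lower bounds interchanged. The key assertion I would prove is that $\Phi(t)\approx t^{q}$ on all of $\mathbb{R}_{+}^{*}$; once this is in hand the lemma is immediate. It is worth noting in passing that the reverse of \eqref{eq:ibetdfide1}, namely $\Phi(s)\le C_{1}\Phi(s/t)\Phi(t)$, is nothing but the sub-multiplicativity hypothesis \eqref{eq:ibejecaqti2} applied to the factorization $s=(s/t)\cdot t$, so only the displayed direction carries content.

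For $t\ge 1$ the upper bound $\Phi(t)\le C_{q}\Phi(1)t^{q}$ is just the upper-type estimate \eqref{eq:sui8n} taken with $s=1$. For the matching lower bound I would apply condition (c), i.e. \eqref{eq:ibide2}, with both of its free variables equal to $t$: this gives $\Phi(1)\le C_{2}\Phi(t)/t^{q}$, that is $\Phi(t)\ge C_{2}^{-1}\Phi(1)t^{q}$. Hence $\Phi(t)\approx t^{q}$ for $t\ge 1$.

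The delicate range is $0<t<1$, and the lower bound there is the step I expect to be the main obstacle. The upper bound is again read off from \eqref{eq:ibide2}, now with numerator $1$ and denominator $t^{-1}\ge 1$: since the quotient is then $t$, this yields $\Phi(t)\le C_{2}\Phi(1)t^{q}$. For the complementary lower bound the sub-multiplicativity \eqref{eq:ibejecaqti2} must be brought in together with the large-argument estimate already obtained: writing $1=t\cdot t^{-1}$ gives $\Phi(1)\le C_{1}\Phi(t)\Phi(t^{-1})$, while $t^{-1}\ge 1$ and the upper-type bound give $\Phi(t^{-1})\le C_{q}\Phi(1)t^{-q}$; combining these produces $\Phi(t)\ge (C_{1}C_{q})^{-1}t^{q}$. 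Thus $\Phi(t)\approx t^{q}$ for $0<t<1$ as well, and together with the previous range we obtain constants $0<c_{1}\le c_{2}$ with $c_{1}t^{q}\le\Phi(t)\le c_{2}t^{q}$ for every $t>0$.

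The conclusion then follows by direct substitution: for $s,t>0$ one has $\Phi(s/t)\le c_{2}(s/t)^{q}=c_{2}s^{q}t^{-q}\le c_{2}(c_{1}^{-1}\Phi(s))(c_{2}\Phi(t)^{-1})=(c_{2}^{2}/c_{1})\,\Phi(s)/\Phi(t)$, which is precisely \eqref{eq:ibetdfide1} with $C=c_{2}^{2}/c_{1}$. In the concave case $\Phi\in\widetilde{\mathscr L}_{p}$ the same scheme applies with $q$ replaced by $p$: the lower-type estimate \eqref{eq:sui8n} supplies $\Phi(t)\ge \Phi(1)C_{p}^{-1}t^{p}$ for $t\ge 1$, condition (c) in its form \eqref{eq:iaqide2} supplies the complementary upper bounds for $t\ge1$ and for $0<t<1$, and \eqref{eq:ibejecaqti2} again produces the remaining lower bound for small $t$, giving $\Phi(t)\approx t^{p}$ globally and hence the claim with $C=c_{2}^{2}/c_{1}$ once more.
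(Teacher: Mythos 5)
Your argument is correct as the definitions are literally stated, but it takes a genuinely different route from the paper's. The paper disposes of the convex case $\Phi\in\widetilde{\mathscr U}$ by citing an external lemma and, for $\Phi\in\widetilde{\mathscr L}_{p}$, first applies the sub-multiplicativity \eqref{eq:ibejecaqti2} to write $\Phi(s/t)\leq C_{1}\Phi(s)\Phi(1/t)$, thereby reducing the lemma to the single inequality $\Phi(1/t)\lesssim 1/\Phi(t)$, which it then checks separately for $0<t<1$ and $t\geq 1$ using \eqref{eq:iaqide2} and the lower type. You instead establish the much stronger global two-sided bound $\Phi(t)\approx t^{q}$ (resp.\ $t^{p}$) on all of $\mathbb{R}_{+}^{*}$, after which \eqref{eq:ibetdfide1} is a one-line substitution; all four of your bounds check out, and as a bonus your treatment of the $\widetilde{\mathscr U}$ case is self-contained where the paper's is not. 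The one thing you should flag explicitly is what your proof reveals: the decisive step is the diagonal specialization $s=t$ in \eqref{eq:ibide2} (resp.\ \eqref{eq:iaqide2}), which forces $\Phi(t)\gtrsim t^{q}$ (resp.\ $\Phi(t)\lesssim t^{p}$) for $t\geq 1$ and hence shows that $\widetilde{\mathscr U}^{q}$ and $\widetilde{\mathscr L}_{p}$ contain only functions equivalent to the pure powers $t^{q}$ and $t^{p}$. This is a degeneracy of the definition rather than an error on your part --- the paper's own argument already extracts $\Phi(s)\lesssim s^{p}$ for $s\geq 1$ from \eqref{eq:iaqide2} with $t=1$, so it rests on the same phenomenon --- but your proof makes the collapse explicit, and if condition (c) were ever reformulated so that $\widetilde{\mathscr U}$ and $\widetilde{\mathscr L}$ admit genuinely non-power functions, your power-comparison strategy would no longer apply while the paper's reduction via \eqref{eq:ibejecaqti2} might survive.
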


\begin{proof}
The inequality (\ref{eq:ibetdfide1}) is true for $\Phi\in \widetilde{\mathscr{U}}$ (see. \cite[Lemma 4.3]{djesehaqb}).\\
For  $0<p \leq 1$ suppose that $\Phi\in \widetilde{\mathscr{L}}_{p}$.
For  $s,t >0$, we have
$$  \Phi\left(\frac{s}{t}\right)\leq  C_{1}\Phi(s) \Phi\left(\frac{1}{t}\right),   $$
since the inequality (\ref{eq:ibejecaqti2}) is satisfied.\\
If $0<t<1$ then  we have
$$  \Phi(t)= \Phi\left(\frac{1}{\frac{1}{t} } \right) \leq C_{2} \frac{1^{p}}{\Phi\left(\frac{1}{t}\right)},            $$
thanks to Relation (\ref{eq:iaqide2}). It follows that
\begin{equation}\label{eq:ibetdfaqide1}
\Phi\left(\frac{1}{t}\right) \leq C_{2}\frac{1}{\Phi(t)}. 
\end{equation}
If $t \geq 1$ then  we have
$$  \Phi\left(\frac{1}{t}\right)=\Phi\left(\frac{1}{t}\times 1\right) \leq C_{p} \left(\frac{1}{t}\right)^{p}\Phi(1),               $$
since  $\Phi$ is of lower type $p$. 
It follows that
\begin{equation}\label{eq:ibetdaqfaqide1}
\Phi\left(\frac{1}{t}\right) \leq \frac{C_{2}}{\Phi(1)}\frac{1}{\Phi(t)}, 
\end{equation}
since from Relation (\ref{eq:iaqide2}), we have also
$$  \Phi(t)=\Phi\left( \frac{t}{1}\right) \leq C_{2} \frac{t^{p}}{\Phi(1)}.$$
From Relations (\ref{eq:ibetdfaqide1}) and (\ref{eq:ibetdaqfaqide1}), we deduce that
$$ \Phi\left(\frac{1}{t}\right) \lesssim \frac{1}{\Phi(t)}. 
    $$
Therefore, 
$$ \Phi\left(\frac{s}{t}\right)\lesssim  \frac{\Phi(s)}{\Phi(t)}.   $$
\end{proof}

\subsection{Some properties of Orlicz spaces.}

Let $(X, \sum, \mu)$ be a measure space and $\Phi$ a  growth function. The Orlicz space on  $X$, $L^{\Phi}(X, d\mu)$ is the set of all equivalent
classes (in the usual sense) of measurable functions $f : X \longrightarrow \mathbb{C}$ which satisfy
$$ \|f\|_{L_{\mu}^{\Phi}}^{lux}:=\inf\left\{\lambda>0 : \int_{X}\Phi\left(\dfrac{| f(x)|}{\lambda}\right)d\mu(x) \leq 1  \right\}< \infty. $$
If $\Phi$ is convex  then $(L^{\Phi}(X, d\mu),  \|.\|_{L_{\mu}^{\Phi}}^{lux})$  is a Banach space (see.\cite{shuchen, kokokrbec, raoren}).  The space  $L^{\Phi}$  generalizes the Lebesgue space $L^{p}$  for $0< p < \infty$.

Let $\Phi$ be a growth function. Let  $f\in L^{\Phi}(X, d\mu)$ and  put 
$$ \|f\|_{L_{\mu}^{\Phi}}:= \int_{X}\Phi\left(| f(x)|\right)d\mu(x).   $$
If  $\Phi \in \mathscr{C}^{1}(\mathbb{R}_{+})$ is a growth function such that   $0< a_\Phi\leq b_\Phi <\infty$, then we have the following inequalities
$$  \|f\|_{L_{\mu}^{\Phi}} \lesssim \max\left\{ \left(  \|f\|_{L_{\mu}^{\Phi}}^{lux} \right)^{a_\Phi}; \left( \|f\|_{L_{\mu}^{\Phi}}^{lux}\right)^{b_\Phi}\right\}     $$
and
$$   \|f\|_{L_{\mu}^{\Phi}}^{lux} \lesssim \max\left\{ \left( \|f\|_{L_{\mu}^{\Phi}}\right)^{1/a_\Phi} ;  \left( \|f\|_{L_{\mu}^{\Phi}}  \right)^{1/b_\Phi}\right\}.
    $$
We will simply denote  $L^{\Phi}(\mathbb{R})= L^{\Phi}(\mathbb{R}, dx)$, where $dx$ is the Lebesgue measure on $\mathbb{R}$.

Let $\Phi$ be a convex growth function. We have the following inclusion
$$ L^{\Phi}(\mathbb{R})\subset  L^{1}\left(\mathbb{R},\frac{dt}{1+t^{2}}\right)$$

Let $\alpha>-1$ and $E$ be a measurable set of $\mathbb{C}_{+}$. We denote 
$$ |E|_{\alpha}:=\int_{E}dV_{\alpha}(x+iy).    $$

Let $I$ be an interval and $Q_{I}$ its associated Carleson square. It is easy to see that
\begin{equation}\label{eq:fo4n1l}
 |Q_{I}|_{\alpha}=\frac{1}{1+\alpha}|I|^{2+\alpha}. \end{equation}

Fix $\beta \in \left\{0; 1/3\right\}$. An interval $\beta-$dyadic is any interval $I$ of $\mathbb{R}$ of the form
 $$   2^{-j}(\lbrack 0,1) +k+(-1)^{j}\beta),    $$
 where $k,j\in \mathbb{Z}$. We denote by $\mathcal{D}_{j}^{\beta}$ the set of $\beta-$dyadic intervals $I$ such that $|I|=2^{-j}$.  Put
 $ \mathcal{D}^{\beta}:=\bigcup_{j}\mathcal{D}_{j}^{\beta}$.\\
 We have the following properties (see for example \cite{DVCruzJMMartell,Emstshae}):
 \begin{itemize}
 \item[-] for all $I,J\in \mathcal{D}^{\beta}$, we have $I\cap J \in \left\{\emptyset ; I; J\right\}$,
 \item[-] for each fixed  $j\in \mathbb{Z}$, if $I \in \mathcal{D}_{j}^{\beta}$ then there exists a unique $J \in \mathcal{D}_{j-1}^{\beta}$ such that  $I\subset J$,
 \item[-] for each fixed $j\in \mathbb{Z}$, if $I \in \mathcal{D}_{j}^{\beta}$ then there exists $I_{1}, I_{2} \in \mathcal{D}_{j+1}^{\beta}$ such that
  $I=I_{1}\cup I_{2}$ and $I_{1}\cap I_{2}=\emptyset$.
 \end{itemize}
We refer to \cite{hytperez,Spmcregu} for the following.
 
 \begin{lemme}\label{pro:main80}
 Let $I$ be an interval.  There exists    $\beta \in \left\{0, 1/3\right\}$ and $J \in \mathcal{D}^{\beta}$ such that $I\subset J $ and $|J|\leq 6|I|$.
 \end{lemme}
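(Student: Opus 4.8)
The plan is to prove this by the classical \emph{one-third trick}: although a short interval $I$ may fail to lie inside any single interval of the standard grid $\mathcal{D}^{0}$ (it can straddle a grid point), the shifted grid $\mathcal{D}^{1/3}$ is positioned so that, at an appropriately chosen common scale, $I$ cannot straddle a grid point of \emph{both} grids at once. Producing a single tile of one of the two grids that contains $I$ and has comparable length is then immediate.

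First I would fix the scale. Writing $\ell:=|I|>0$, I choose the unique integer $j\in\mathbb{Z}$ with
\[
3\ell<2^{-j}\le 6\ell ,
\]
which exists and is unique because the interval $(3\ell,6\ell]$ has endpoint ratio $2$ and therefore contains exactly one integer power of $2$. With this choice the candidate containing interval will automatically satisfy $|J|=2^{-j}\le 6\ell=6|I|$, the required size bound, so it only remains to find, for one value $\beta\in\{0,1/3\}$, a level-$j$ interval $J\in\mathcal{D}_{j}^{\beta}$ with $I\subset J$.

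Next I would analyze the grid points. At level $j$ the tiling of $\mathbb{R}$ by $\mathcal{D}_{j}^{\beta}$ has boundary points $a_{k}^{\beta}:=2^{-j}\bigl(k+(-1)^{j}\beta\bigr)$, $k\in\mathbb{Z}$. The key observation is that the combined set $\{a_{k}^{0}\}\cup\{a_{k}^{1/3}\}$ is interlaced: inside each period $[2^{-j}k,2^{-j}(k+1))$ the single $\mathcal{D}^{1/3}$ point sits at distance $2^{-j}/3$ from one end and $2\cdot 2^{-j}/3$ from the other (the order depending on the parity of $j$ through the factor $(-1)^{j}$), since $|n-(-1)^{j}/3|\ge 1/3$ for every integer $n$. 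Hence the distance from any $\mathcal{D}^{0}$ boundary point to the nearest $\mathcal{D}^{1/3}$ boundary point is at least $2^{-j}/3>\ell=|I|$. Because $|I|=\ell<2^{-j}/3<2^{-j}$, the interval $I$ can contain in its interior at most one point of $\{a_{k}^{0}\}\cup\{a_{k}^{1/3}\}$: it cannot contain two points of the same grid (they are $2^{-j}$ apart) nor one from each grid (they are at least $2^{-j}/3$ apart).

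Finally I would conclude by a dichotomy. If the interior of $I$ contains no point $a_{k}^{0}$, then $I$ sits inside a single level-$j$ interval of $\mathcal{D}^{0}$ and I take $\beta=0$. Otherwise $I$ contains some $a_{k}^{0}$ in its interior, whence by the separation estimate it contains no $a_{k}^{1/3}$, so $I$ sits inside a single level-$j$ interval of $\mathcal{D}^{1/3}$ and I take $\beta=1/3$. In either case $|J|=2^{-j}\le 6|I|$ and $I\subset J$. I expect the only delicate point to be the bookkeeping in the interlacing computation — correctly tracking the shift $(-1)^{j}2^{-j}/3$ and the resulting minimal separation $2^{-j}/3$ for both parities of $j$ — together with the routine half-open-versus-closed endpoint conventions that turn ``no interior grid point'' into genuine containment in one tile (handled by the strict inequality $\ell<2^{-j}/3$, with the slack in the constant $6$ absorbing any endpoint ambiguity).
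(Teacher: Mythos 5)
Your argument is correct: it is the standard one-third trick, and the scale choice $3|I|<2^{-j}\le 6|I|$ together with the separation bound $\operatorname{dist}\bigl(\{2^{-j}k\},\{2^{-j}(m+(-1)^{j}/3)\}\bigr)=2^{-j}/3>|I|$ does yield a tile of one of the two grids containing $I$ with $|J|=2^{-j}\le 6|I|$. The paper itself offers no proof of this lemma, deferring to the cited references, so there is nothing to compare against; your write-up is essentially the proof those references contain. One small point of bookkeeping: since the tiles are half-open, the correct dichotomy is not ``no grid point in the interior of $I$'' but ``no grid point in $(a,b]$'' where $I$ has endpoints $a<b$ (an interval whose right endpoint is a grid point is not contained in the tile to its left even though its interior meets no grid point). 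This is harmless: two distinct points of the combined grid are at distance at least $2^{-j}/3>|I|$, so at most one lies in the closed interval $\overline{I}$, and hence at least one of the two grids has no boundary point in $(a,b]$, which is exactly the condition for containment in a single tile of that grid.
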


Let  $\alpha>-1$ and $f$  a measurable function on $\mathbb{R}$ (resp. $\mathbb{C}_{+}$). The Hardy-Littlewood maximal functions on the line and on the upper-half plane for a function of $f$ are respectively defined by
 $$  \mathcal{M}_{HL}(f)(x):=\sup_{I \subset \mathbb{R} }\frac{\chi_{I}(x)}{|I|}\int_{I}| f(t)| dt, ~~\forall~ x \in \mathbb{R},      $$
and  
$$ \mathcal{M}_{V_{\alpha}}(f)(z):= \sup_{I \subset \mathbb{R}}\frac{\chi_{Q_{I}}(z)}{|Q_{I}|_{\alpha}}\int_{Q_{I}}| f(\omega)| dV_{\alpha}(\omega),~~ \forall~ z \in \mathbb{C}_{+},     $$ 
where the supremum is taken over all intervals of $\mathbb{R}$.  Similarly, for $\beta \in \left\{0; 1/3\right\}$, we define their dyadic versions $\mathcal{M}^{\mathcal{D}^{\beta}}_{HL}(f)$ and  $\mathcal{M}_{V_{\alpha}}^{\mathcal{D}^{\beta}}(f)$ as above but with the supremum taken this time on the intervals in the dyadic grid $\mathcal{D}^{\beta}$. We have
  \begin{equation}\label{eq:of895ua1l}
   \mathcal{M}_{HL}(f) \leq 6 \sum_{\beta \in \{0; 1/3\} }   \mathcal{M}^{\mathcal{D}^{\beta}}_{HL}(f)
  \end{equation}
 and 
 \begin{equation}\label{eq:ooua5x1l}
\mathcal{M}_{V_{\alpha}} (f) \leq 6^{2+\alpha} \sum_{\beta \in \{0;1/3\}}   \mathcal{M}^{\mathcal{D}^{\beta}}_{V_{\alpha}} (f).
\end{equation}

\begin{proposition}\label{pro:main9aaq9}
Let  $\beta \in \left\{0; 1/3\right\}$,  $\alpha>-1$,  $0<\gamma<\infty$ and $\Phi$ a growth function. Put
 $$ \Phi_{\gamma}(t):=\Phi(t^{\gamma}), ~~\forall~t\geq 0.    $$
If  $\Phi_{\gamma}$ is convex then the following assertions are satisfied
 \begin{itemize}
  \item[(i)] for all  $0\not\equiv f\in L^{\Phi}(\mathbb{R})$ and for   $\lambda > 0$, 
  $$ \left| \left\{x\in \mathbb{R} : \left(\mathcal{M}^{\mathcal{D}^{\beta}}_{HL}\left(\left(\frac{|f|}{\|f\|_{L^{\Phi}}^{lux}}\right)^{1/\gamma}\right)(x)\right)^{\gamma}> \lambda \right\}    \right| \leq \frac{1}{\Phi(\lambda)}.    $$
\item[(ii)] for all  $0\not\equiv f\in L^{\Phi}(\mathbb{C_{+}}, dV_{\alpha})$ and for  $\lambda > 0$
$$  \left| \left\{z\in \mathbb{C}_{+} : \left(\mathcal{M}^{\mathcal{D}^{\beta}}_{V_{\alpha}}\left(\left(\frac{|f|}{\|f\|_{L_{\alpha}^{\Phi}}^{lux}}\right)^{1/\gamma}\right)(z)\right)^{\gamma}> \lambda \right\}    \right|_{\alpha} \leq \frac{1}{\Phi(\lambda)}.    $$
  \end{itemize}
\end{proposition}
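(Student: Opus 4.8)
The plan is to prove both assertions simultaneously by reducing the weak-type bound for the dyadic maximal function to a disjointification of the relevant dyadic intervals. The key observation is that $\Phi_\gamma(t) = \Phi(t^\gamma)$ being convex lets me work with the convex function $\Phi_\gamma$ and the $L^1$-type geometry of dyadic cubes, while the final inequality is phrased in terms of $\Phi$ itself. I would treat (i) and (ii) in parallel since they differ only in the measure ($dx$ versus $dV_\alpha$) and the maximal function, both of which share the stopping-time structure of the dyadic grid $\mathcal{D}^\beta$.

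\textbf{Step 1 (normalization and setup).} By homogeneity of the luxembourg norm I may assume $\|f\|_{L^\Phi}^{lux}=1$ (resp. $\|f\|_{L^\Phi_\alpha}^{lux}=1$), so that by definition of the luxembourg norm and the $\Delta_2$-type continuity one has $\int_{\mathbb R}\Phi_\gamma\!\left(|f|^{1/\gamma}\right)dx = \int_{\mathbb R}\Phi(|f|)\,dx \le 1$ (resp. the same with $dV_\alpha$). Write $g=\left(|f|/\|f\|\right)^{1/\gamma}$, so that the set in question is $\{\,(\mathcal M^{\mathcal D^\beta}g)^\gamma>\lambda\,\}=\{\mathcal M^{\mathcal D^\beta}g>\lambda^{1/\gamma}\}$. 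Fixing $\lambda>0$ and setting $\eta=\lambda^{1/\gamma}$, the goal becomes the weak-type estimate $\bigl|\{\mathcal M^{\mathcal D^\beta}g>\eta\}\bigr|\le 1/\Phi(\eta^\gamma)=1/\Phi_\gamma(\eta)$.

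\textbf{Step 2 (maximal selection of dyadic intervals).} Because $\mathcal D^\beta$ is a nested grid in which each interval has a unique parent and two children (the structural properties quoted before Lemma~\ref{pro:main80}), the set $\{\mathcal M^{\mathcal D^\beta}g>\eta\}$ is a disjoint union $\bigcup_k Q_{I_k}$ over the \emph{maximal} dyadic intervals $I_k$ satisfying the averaging condition $\frac{1}{|Q_{I_k}|_\alpha}\int_{Q_{I_k}}g\,dV_\alpha>\eta$ (and likewise with $|I_k|^{-1}\int_{I_k}g\,dx>\eta$ in case (i)). Maximality gives that the $I_k$ (hence the $Q_{I_k}$) are pairwise disjoint. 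This is the standard Calderón--Zygmund stopping construction, valid verbatim in the dyadic setting.

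\textbf{Step 3 (Jensen on each selected interval).} On each selected interval I apply Jensen's inequality for the convex function $\Phi_\gamma$ to the average of $g$: since $\Phi_\gamma$ is convex and $\frac{1}{|Q_{I_k}|_\alpha}\int_{Q_{I_k}}g\,dV_\alpha>\eta$,
\begin{equation*}
\Phi_\gamma(\eta)<\Phi_\gamma\!\left(\frac{1}{|Q_{I_k}|_\alpha}\int_{Q_{I_k}}g\,dV_\alpha\right)\le \frac{1}{|Q_{I_k}|_\alpha}\int_{Q_{I_k}}\Phi_\gamma(g)\,dV_\alpha=\frac{1}{|Q_{I_k}|_\alpha}\int_{Q_{I_k}}\Phi(|f|)\,dV_\alpha,
\end{equation*}
so $|Q_{I_k}|_\alpha\le \frac{1}{\Phi_\gamma(\eta)}\int_{Q_{I_k}}\Phi(|f|)\,dV_\alpha$. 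Summing over the disjoint $Q_{I_k}$ and using $\sum_k\int_{Q_{I_k}}\Phi(|f|)\,dV_\alpha\le\int_{\mathbb C_+}\Phi(|f|)\,dV_\alpha\le 1$ yields $\bigl|\{\mathcal M^{\mathcal D^\beta}g>\eta\}\bigr|_\alpha=\sum_k|Q_{I_k}|_\alpha\le 1/\Phi_\gamma(\eta)=1/\Phi(\lambda)$, which is exactly the claim. Case (i) is identical with $dx$ in place of $dV_\alpha$ and $I_k$ in place of $Q_{I_k}$.

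\textbf{The main obstacle} I anticipate is purely a matter of care rather than difficulty: justifying that the maximal selected intervals tile the superlevel set exactly (so that the averages are genuinely bounded below by $\eta$ on the selected scale, with no loss), and confirming that Jensen's inequality applies with $\Phi_\gamma$ convex even though $\Phi$ itself need only be a growth function. The hypothesis ``$\Phi_\gamma$ is convex'' is precisely what makes Step~3 legitimate, and the identity $\Phi_\gamma(g)=\Phi(|f|/\|f\|)$ after normalization is what converts the estimate back into a statement about $\Phi$. No $\Delta_2$ or index hypotheses are needed for this proposition; the convexity of $\Phi_\gamma$ carries the entire argument.
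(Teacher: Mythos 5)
Your proof is correct and follows essentially the same route as the paper's: normalize, set $g=(|f|/\|f\|)^{1/\gamma}$ so that $\int\Phi_\gamma(g)\le 1$, decompose the superlevel set of the dyadic maximal function into pairwise disjoint maximal dyadic intervals (resp. Carleson squares), apply Jensen's inequality for the convex function $\Phi_\gamma$ on each piece, and sum. The only cosmetic difference is that you make the Calder\'on--Zygmund stopping-time selection explicit where the paper simply asserts the existence of the disjoint family.
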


\begin{proof}
$i)$ Let $0\not\equiv f\in L^{\Phi}(\mathbb{R})$ and   put 
 $$   g:= \frac{|f|^{1/\gamma}}{\left(\|f\|_{L^{\Phi}}^{lux}\right)^{1/\gamma}}.     $$
We have $$ \int_{\mathbb{R}}\Phi_{\gamma}(|g(x)|)dx= \int_{\mathbb{R}}\Phi_{\gamma}\left(\left( \frac{|f(x)|}{\|f\|_{L^{\Phi}}^{lux}}\right)^{1/\gamma}\right)dx = \int_{\mathbb{R}}\Phi\left( \frac{|f(x)|}{\|f\|_{L^{\Phi}}^{lux}}\right)dx \leq 1.          $$
We deduce that $g \in L^{\Phi_{\gamma}}(\mathbb{R})$ and
 $\|g\|_{L^{\Phi_{\gamma}}}^{lux} \leq 1$.\\
For $\lambda > 0$, we can therefore find $\{I_{j}\}_{j\in \mathbb{N}}$ a family of pairwise disjoint $\beta-$dyadic intervals such that
$$\left\{x\in \mathbb{R} : \mathcal{M}^{\mathcal{D}^{\beta}}_{HL}(g)(x)> \lambda^{1/\gamma} \right\} = \bigcup_{j\in \mathbb{N}}I_{j},  $$
and 
$$\lambda^{1/\gamma} <  \frac{1}{|I_{j}|}\int_{I_{j}}|g(y)| dy, ~~ \forall~j\in \mathbb{N}.  $$
For $j \in \mathbb{N}$,  we have
$$  \Phi(\lambda)=\Phi_{\gamma}\left(\lambda^{1/\gamma} \right) \leq  \Phi_{\gamma}\left( \frac{1}{|I_{j}|}\int_{I_{j}}|g(y)| dy  \right) \leq \frac{1}{|I_{j}|}\int_{I_{j}}\Phi_{\gamma}(|g(y)|) dy,       $$
thanks to Jensen's inequality. We deduce that
$$ |I_{j}| \leq  \frac{1}{\Phi(\lambda)}\int_{I_{j}}\Phi_{\gamma}(|g(y)|) dy, ~~ \forall~j\in \mathbb{N}.            $$
It follows that 
\begin{align*}
\left|\left\{x\in \mathbb{R} : \mathcal{M}^{\mathcal{D}^{\beta}}_{HL}(g)(x)> \lambda^{1/\gamma} \right\}\right|&= \sum_{j}|I_{j}| \\
&\leq \sum_{j}\frac{1}{\Phi(\lambda)}\int_{I_{j}}\Phi_{\gamma}(|g(y)|) dy \\
&= \frac{1}{\Phi(\lambda)}\int_{\bigcup_{j}I_{j}}\Phi_{\gamma}(|g(y)|) dy 
\leq \frac{1}{\Phi(\lambda)}.
\end{align*}
In the same way, we prove the inequality of the point (ii).
 \end{proof}

\begin{theorem}\label{pro:mainplaaqq6}
Let   $\alpha>-1$ and $\Phi_{1}, \Phi_{2} \in \mathscr{U}$. The following assertions are equivalent.
 \begin{itemize}
\item[(i)] There exists a constant $C_{1}>0$ such that for all $t>0$, 
\begin{equation}\label{eq:conditdedinis}
\int_0^t\frac{\Phi_{2}(s)}{s^{2}}ds\le C_{1}\frac{\Phi_{1}(t)}{t}.\end{equation}
\item[(ii)] There exists a constant $C_{2}>0$ such that for all $f\in L^{\Phi_{1}}(\mathbb{R})$, 
\begin{equation}\label{eq:foncinaqimal}
\|\mathcal{M}_{HL}(f)\|_{L^{\Phi_{2}}}^{lux} \leq C_{2} \|f\|_{L^{\Phi_{1}}}^{lux}.
\end{equation}
\item[(iii)] There exists a constant $C_{3}>0$ such that for all $f\in L^{\Phi_{1}}(\mathbb{C_{+}}, dV_{\alpha})$,
\begin{equation}\label{eq:foncinimal}
\|\mathcal{M}_{V_{\alpha}}(f)\|_{L_{V_{\alpha}}^{\Phi_{2}}}^{lux} \leq C_{3} \|f\|_{L_{V_{\alpha}}^{\Phi_{1}}}^{lux}.
\end{equation}
 \end{itemize}
\end{theorem}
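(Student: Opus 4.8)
The plan is to use condition (i) as a pivot and prove the two cycles (i)$\Rightarrow$(ii)$\Rightarrow$(i) and (i)$\Rightarrow$(iii)$\Rightarrow$(iii)$\Rightarrow$(i), the line statement (ii) and the upper–half–plane statement (iii) running along identical templates, the only differences being the base measure ($dx$ versus $dV_{\alpha}$) and the geometry (intervals versus Carleson squares, with $|Q_{I}|_{\alpha}=\frac{1}{1+\alpha}|I|^{2+\alpha}$). Throughout I would lean on the standing assumptions that $\Phi_{i}'(t)\approx \Phi_{i}(t)/t$ and that every element of $\mathscr{U}$ is convex and satisfies the $\Delta_{2}$–condition, together with the two–sided comparison between $\|\cdot\|_{L^{\Phi}}$ and $\|\cdot\|_{L^{\Phi}}^{lux}$ recorded earlier. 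These let me pass freely between the modular integral $\int\Phi(|f|)$ and the Luxemburg norm and absorb multiplicative constants inside the arguments of the $\Phi_{i}$.

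For the forward direction (i)$\Rightarrow$(ii) I would first reduce to the dyadic operators through (\ref{eq:of895ua1l}), using convexity and $\Delta_{2}$ of $\Phi_{2}$ to dominate $\Phi_{2}(\mathcal{M}_{HL}(f))$ by a sum of $\Phi_{2}(\mathcal{M}^{\mathcal{D}^{\beta}}_{HL}(f))$. Normalizing $\|f\|_{L^{\Phi_{1}}}^{lux}=1$ so that $\int_{\mathbb{R}}\Phi_{1}(|f|)\,dx\le 1$, I would run the Calderón--Zygmund stopping decomposition: the level set $\{\mathcal{M}^{\mathcal{D}^{\beta}}_{HL}(f)>\lambda\}$ is the disjoint union of the maximal dyadic intervals $I_{j}$ on which the average of $|f|$ exceeds $\lambda$ (finiteness coming from Proposition \ref{pro:main9aaq9}), and splitting against $\{|f|>\lambda/2\}$ yields the good–$\lambda$ bound $|\{\mathcal{M}^{\mathcal{D}^{\beta}}_{HL}(f)>\lambda\}|\le \frac{2}{\lambda}\int_{\{|f|>\lambda/2\}}|f|\,dx$. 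Writing $\int\Phi_{2}(\mathcal{M}^{\mathcal{D}^{\beta}}_{HL}(f))\,dx=\int_{0}^{\infty}\Phi_{2}'(\lambda)\,|\{\mathcal{M}^{\mathcal{D}^{\beta}}_{HL}(f)>\lambda\}|\,d\lambda$ and applying Fubini, everything reduces to controlling $\int_{0}^{2|f(x)|}\frac{\Phi_{2}'(\lambda)}{\lambda}\,d\lambda$; replacing $\Phi_{2}'(\lambda)$ by $\Phi_{2}(\lambda)/\lambda$ and invoking (i) at $t=2|f(x)|$ followed by $\Delta_{2}$ for $\Phi_{1}$ bounds this by $\Phi_{1}(|f(x)|)/|f(x)|$, whence $\int\Phi_{2}(\mathcal{M}^{\mathcal{D}^{\beta}}_{HL}(f))\,dx\lesssim\int\Phi_{1}(|f|)\,dx\le 1$ with an \emph{absolute} constant. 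The norm comparison then converts this modular bound into (ii). The identical argument on Carleson squares, using the parent relation $|Q_{J}|_{\alpha}=2^{2+\alpha}|Q_{I}|_{\alpha}$, the reduction (\ref{eq:ooua5x1l}), and Proposition \ref{pro:main9aaq9}(ii), gives (i)$\Rightarrow$(iii).

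For the converses I would test on indicators and use the weak–type profile of the maximal function of an indicator. Taking $f=\chi_{[0,\ell]}$ one computes $\|f\|_{L^{\Phi_{1}}}^{lux}=1/\Phi_{1}^{-1}(1/\ell)$, and at scale $\rho$ the maximal function has size $\approx(\ell/\rho)^{d}$ on a tent of measure $\approx\rho^{d}$ (with $d=1$ on the line, $d=2+\alpha$ for $V_{\alpha}$), so the two powers of $\rho$ cancel and leave $|\{\mathcal{M}_{HL}(\chi_{[0,\ell]})>v\}|\approx \ell/v$ for $0<v\le1$ (respectively $|\{\mathcal{M}_{V_{\alpha}}(\chi_{Q_{I}})>v\}|_{\alpha}\approx |Q_{I}|_{\alpha}/v$). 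Setting $A:=C_{2}\|f\|_{L^{\Phi_{1}}}^{lux}$, assertion (ii) forces $\int\Phi_{2}(\mathcal{M}_{HL}(f)/A)\,dx\le 1$; expressing the left side through its distribution function gives $\frac{\ell}{A}\int_{0}^{1/A}\frac{\Phi_{2}(u)}{u^{2}}\,du\lesssim 1$, and writing $t:=1/A=\Phi_{1}^{-1}(1/\ell)/C_{2}$ (which sweeps out all of $(0,\infty)$ as $\ell$ varies) produces $\int_{0}^{t}\frac{\Phi_{2}(u)}{u^{2}}\,du\lesssim\frac{\Phi_{1}(C_{2}t)}{t}\lesssim\frac{\Phi_{1}(t)}{t}$ after $\Delta_{2}$, which is (i). The implication (iii)$\Rightarrow$(i) is verbatim with $f=\chi_{Q_{I}}$, the weight $y^{\alpha}$ and the exponent $2+\alpha$ entering only through the weak–type profile $|Q_{I}|_{\alpha}/v$.

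The step I would write out most carefully, and which I expect to be the main obstacle, is the forward direction: securing the good–$\lambda$ inequality with a constant independent of $f$ and then correctly turning the modular estimate $\int\Phi_{2}(\mathcal{M}_{HL}(f))\lesssim 1$ into the Luxemburg inequality (ii), since the exponents $1/a_{\Phi_{2}}$ and $1/b_{\Phi_{2}}$ in the norm comparison must be tracked so that linear homogeneity in $\|f\|_{L^{\Phi_{1}}}^{lux}$ is preserved. A subtler point, easy to get wrong, is the converse on the upper half–plane: a naive pointwise lower bound on a one–dimensional slice produces the \emph{wrong} power of $u$, and one must instead use the full two–dimensional level sets, whose measure $\approx |Q_{I}|_{\alpha}/v$ is precisely what makes the exponent $2+\alpha$ cancel and reconstitutes $\int_{0}^{t}\frac{\Phi_{2}(u)}{u^{2}}\,du$ rather than some $\alpha$–dependent integral.
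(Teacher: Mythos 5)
Your proposal is correct, but on the one implication the paper actually writes out in full, namely (iii) $\Rightarrow$ (i), you take a genuinely different route. You argue directly: test (iii) on a single indicator $f=\chi_{Q_I}$, observe that $\|f\|_{L^{\Phi_1}_{V_\alpha}}^{lux}=1/\Phi_1^{-1}(1/|Q_I|_\alpha)$ and that $\left|\{\mathcal{M}_{V_\alpha}(\chi_{Q_I})>v\}\right|_\alpha\gtrsim |Q_I|_\alpha/v$ for $0<v<1$ (by averaging over the enlarged Carleson squares $Q_J$ with $J\supset I$, $|J|=|I|v^{-1/(2+\alpha)}$, which is exactly the point where the exponent $2+\alpha$ cancels, as you note), and then unwind the modular bound $\int\Phi_2(\mathcal{M}_{V_\alpha}(f)/A)\,dV_\alpha\le 1$ through the distribution function and $\Phi_2'(\lambda)\approx\Phi_2(\lambda)/\lambda$ to reconstitute $\int_0^{t}\Phi_2(u)u^{-2}\,du\lesssim \Phi_1(C_3t)/t$ with $t=1/A$ sweeping out all of $(0,\infty)$. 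The paper instead argues by contraposition: assuming (i) fails it selects scales $t_k$ with $\int_0^{t_k}\Phi_2(s)s^{-2}ds\ge 2^k\Phi_1(2^kt_k)/t_k$, places disjoint Carleson squares with $|Q_{I_k}|_\alpha=(2^k\Phi_1(2^kt_k))^{-1}$, and shows that the superposition $f=\sum_k 6^{2+\alpha}2^kt_k\chi_{Q_{I_k}}$ lies in $L^{\Phi_1}(\mathbb{C}_+,dV_\alpha)$ while $\mathcal{M}_{V_\alpha}(f)\notin L^{\Phi_2}(\mathbb{C}_+,dV_\alpha)$, using the pointwise domination $|f_k|\lesssim\mathcal{M}^{\mathcal{D}^\beta}_{V_\alpha}(f_k)$ and the same layer-cake identity. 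Your version is more elementary (one test function per scale, no infinite sum, no appeal to Lemma \ref{pro:main80} to dominate $f_k$ by its dyadic maximal function) and gives a quantitative constant $C_1$ in terms of $C_3$; the paper's construction produces a single explicit function witnessing unboundedness, at the cost of more bookkeeping. For (i)$\Leftrightarrow$(ii) and (i)$\Rightarrow$(iii) the paper only cites external references, and your Calder\'on--Zygmund/good-$\lambda$ argument --- splitting at height $\lambda/2$, reducing to $\int_0^{2|f(x)|}\Phi_2'(\lambda)\lambda^{-1}d\lambda$, invoking (i) and $\Delta_2$, then converting the modular estimate to the Luxemburg inequality via the $1/a_{\Phi_2}$, $1/b_{\Phi_2}$ comparison and homogeneity --- is the standard proof and is sound.
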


\begin{proof}
$i) \Leftrightarrow ii)$ This equivalence follows from the \cite[Lemma 3.15]{djesehaqbAZ}. \\
$(i)\Rightarrow (iii)$ The proof of this implication is identical to that of the \cite[Proposition 3.12]{djesehb}. \\
\medskip

Let us show that (iii) implies (i).  Assume that  inequality (\ref{eq:conditdedinis}) is not satisfied.
We can find a sequence of positive reals $(t_{k})_{k \geq 1}$ such that 
\begin{equation}\label{eq:cotdinis}
\int_{0}^{t_{k}}\frac{\Phi_{2}(s)}{s^2}ds\geq \dfrac{2^{k}\Phi_{1}(2^{k}t_{k})}{t_{k}},~~ \forall~k \geq 1 .\end{equation}
For  $k \geq 1$, put
$$    f_{k}:=2^{k}t_{k}\chi_{Q_{I_{k}}},      $$
where  $Q_{I_{k}}$ is the Carleson square associated with the interval $I_{k}$ given as follows:  $$I_{k}:=\left\{x\in \mathbb{R} : \sum_{j=0}^{k-1}  \left(\frac{\alpha+1}{2^{j}\Phi_{1}(2^{j}t_{j})}\right)^{\frac{1}{\alpha+2}}\leq x<\sum_{j=0}^{k}  \left(\frac{\alpha+1}{2^{j}\Phi_{1}(2^{j}t_{j})}\right)^{\frac{1}{\alpha+2}} \right\}$$ 
From the relation (\ref{eq:fo4n1l}), we have
$$ |Q_{I_{k}}|_{\alpha} =\frac{1}{1+\alpha}|I_{k}|^{2+\alpha}= \frac{1}{2^{k}\Phi_{1}(2^{k}t_{k})}. $$ 
It follows that  $f_{k}\in L^{\Phi_{1}}(\mathbb{C}_{+},  dV_{\alpha})$. In indeed 
$$  \int_{\mathbb{C}_{+}}\Phi_{1}(|f_{k}(z)|)dV_{\alpha}(z)=\int_{Q_{I_{k}}}\Phi_{1}(2^{k}t_{k})dV_{\alpha}(z)=\Phi_{1}(2^{k}t_{k})|Q_{I_{k}}|_{\alpha}
=  \dfrac{1}{2^{k}}< \infty.       $$
According the Lemma \ref{pro:main80}, there exists a dyadic interval $J_{k}\in \mathcal{D}^{\beta}$ such that $I_{k}\subset J_{ k}$ and $|J_{k}|\leq 6|I_{k}|$.
Let $z\in Q_{I_{k}}$. We have
$$ |f_{k}(z)|= \frac{1}{|Q_{I_{k}}|_{\alpha}}\int_{Q_{I_{k}}}2^{k}t_{k}\chi_{Q_{I_{k}}}(\omega)dV_{\alpha}(\omega) \leq 6^{2+\alpha}\frac{\chi_{Q_{J_{k}}}(z)}{|Q_{J_{k}}|_{\alpha}}\int_{Q_{J_{k}}}|f_{k}(\omega)|dV_{\alpha}(\omega),    $$
where  $Q_{J_{k}}$ is the Carleson square associated with $J_{k}$. We deduce that  
$$ |f_{k}(z)| \leq 6^{2+\alpha}\mathcal{M}_{V_{\alpha}}^{\mathcal{D}^{\beta}}(f_{k})(z), ~~ 
\forall~ z \in \mathbb{C}_{+}.
       $$ 
It follows that for $\lambda >0$, 
\begin{equation}\label{eq:cotaqdinis}
\frac{1}{\lambda}\int_{\{z\in \mathbb{C}_{+}~ :~ |f_{k}(z)|> \lambda \}}|f_{k}(z)|dV_{\alpha}(z) \leq 2^{2+\alpha}\left|\left\{z\in \mathbb{C_{+}} : \mathcal{M}_{V_{\alpha}}^{\mathcal{D}^{\beta}}(6^{2+\alpha}f_{k})(z)> \lambda \right\}\right|_{\alpha}.\end{equation}
Put
$$  f(z)= \sum_{k=1}^{\infty}6^{2+\alpha}f_{k}(z), ~~\forall~ z \in \cup_{k\geq 1}Q_{I_{k}} \hspace*{0.5cm}\textrm{and} \hspace*{0.5cm}  f(z)=0, ~~\forall~ z \in \mathbb{C}_{+} \backslash \cup_{k\geq 1}Q_{I_{k}}.     $$
Since the $I_{k}$ are pairwise disjoint, the same is true for the $Q_{I_{k}}$. So we have
$$  \int_{\mathbb{C}_{+}}\Phi_{1}(|f(z)|)dV_{\alpha}(z) \lesssim \sum_{k=1}^{\infty}\int_{\mathbb{C}_{+}}\Phi_{1}(|f_{k}(z)|)dV_{\alpha}(z)
=\sum_{k=1}^{\infty}\int_{Q_{I_{k}}}\Phi_{1}(2^{k}t_{k})\chi_{Q_{I_{k}}}(z)dV_{\alpha}(z)
= \sum_{k=1}^{\infty} \dfrac{1}{2^{k}}< \infty.       $$
We deduce that $f\in L^{\Phi_{1}}(\mathbb{C}_{+},  dV_{\alpha})$. \\
Since the inequalities (\ref{eq:cotdinis}) and (\ref{eq:cotaqdinis}) are satisfied, we have
\begin{align*}
\int_{\mathbb{C}_{+}}\Phi_{2}\left( \mathcal{M}_{V_{\alpha}}(f)(z)\right)dV_{\alpha}(z) &\gtrsim
\int_{0}^{\infty}\Phi_{2}^{\prime}(\lambda)\left|\left\{z\in \mathbb{C}_{+} : \mathcal{M}_{V_{\alpha}}^{\mathcal{D}^{\beta}}(6^{2+\alpha}f_{k})(z)> \lambda \right\}\right|_{\alpha}d\lambda\\
&\gtrsim \int_{0}^{\infty}\Phi_{2}^{\prime}(\lambda) \left( \frac{1}{\lambda}\int_{\{\omega\in \mathbb{C}_{+}~ :~ |f_{k}(\omega)|> \lambda \}}|f_{k}(z)|dV_{\alpha}(z) 
 \right)d\lambda\\
&\gtrsim \int_{\mathbb{C}_{+}}|f_{k}(z)|\left(\int_{0}^{|f_{k}(z)|} \frac{\Phi_{2}(\lambda)}{\lambda^{2}}d\lambda \right)dV_{\alpha}(z)\\
&\gtrsim 2^{k}t_{k}|Q_{I_{k}}|_{\alpha}\left(\int_{0}^{2^{k}t_{k}} \frac{\Phi_{2}(\lambda)}{\lambda^{2}}d\lambda \right)\\
&\gtrsim 2^{k}.
\end{align*}
We deduce that $\mathcal{M}_{V_{\alpha}}(f) \not\in L^{\Phi_{2}}(\mathbb{C}_{+},  dV_{\alpha})$.
\end{proof}

\begin{corollaire}\label{pro:mainplaqaq6}
Let   $\alpha>-1$ and $\Phi \in \mathscr{U}$. The following assertions are equivalent.
\begin{itemize}
\item[(i)] $\Phi \in \nabla_{2}$.
\item[(ii)] $\mathcal{M}_{HL}: L^{\Phi}(\mathbb{R})\longrightarrow L^{\Phi}(\mathbb{R})$ is bounded.
\item[(iii)] $\mathcal{M}_{V_{\alpha}}: L^{\Phi}(\mathbb{C}_{+}, dV_{\alpha})\longrightarrow L^{\Phi}(\mathbb{C}_{+}, dV_{\alpha})$ is bounded.
\end{itemize}
\end{corollaire}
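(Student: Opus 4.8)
The plan is to read this corollary as the diagonal case $\Phi_1=\Phi_2=\Phi$ of the immediately preceding Theorem \ref{pro:mainplaaqq6}, and then to recognize the resulting integral condition as the characterization of $\nabla_2$ furnished by Lemma \ref{pro:main40l6aqlj}. There is no new analytic content to produce; the entire work is an identification of hypotheses.

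First I would invoke Theorem \ref{pro:mainplaaqq6} with $\Phi_1=\Phi_2=\Phi$, which is legitimate since $\Phi\in\mathscr{U}$. Under this specialization, assertion (ii) of the theorem becomes exactly the statement that $\mathcal{M}_{HL}$ is bounded on $L^{\Phi}(\mathbb{R})$, i.e.\ assertion (ii) of the corollary, and assertion (iii) of the theorem becomes the statement that $\mathcal{M}_{V_{\alpha}}$ is bounded on $L^{\Phi}(\mathbb{C}_{+},dV_{\alpha})$, i.e.\ assertion (iii) of the corollary. Thus (ii) and (iii) of the corollary are, verbatim, the second and third assertions of Theorem \ref{pro:mainplaaqq6} in this case, and the theorem already guarantees they are equivalent to its own assertion (i).

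It then remains only to match assertion (i) of the corollary with assertion (i) of the theorem. With $\Phi_1=\Phi_2=\Phi$, condition (i) of Theorem \ref{pro:mainplaaqq6} reads
$$
\int_0^t\frac{\Phi(s)}{s^2}\,ds\le C_1\frac{\Phi(t)}{t},\qquad\forall\,t>0,
$$
which is precisely assertion (ii) in Lemma \ref{pro:main40l6aqlj}. Since $\Phi\in\mathscr{U}$, that lemma applies and gives the equivalence of this integral inequality with $\Phi\in\nabla_2$, namely assertion (i) of the corollary. Chaining the equivalences — $\Phi\in\nabla_2\iff$ (the integral condition) $\iff$ boundedness of $\mathcal{M}_{HL}\iff$ boundedness of $\mathcal{M}_{V_{\alpha}}$ — yields the asserted equivalence of (i), (ii) and (iii).

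Accordingly, I do not anticipate any genuine obstacle: everything reduces to the already-proved Theorem \ref{pro:mainplaaqq6} together with Lemma \ref{pro:main40l6aqlj}. The only point that must be made carefully is that setting $\Phi_1=\Phi_2=\Phi$ really does collapse the theorem's integral hypothesis into the exact form appearing in Lemma \ref{pro:main40l6aqlj}, which is immediate from inspection.
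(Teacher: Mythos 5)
Your proposal is correct and is precisely the intended argument: the paper states this corollary immediately after Theorem \ref{pro:mainplaaqq6} with no written proof, because it is exactly the diagonal specialization $\Phi_1=\Phi_2=\Phi$ combined with the equivalence (i)$\Leftrightarrow$(ii) of Lemma \ref{pro:main40l6aqlj}. Nothing further is needed.
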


\subsection{Some properties of Hardy-Orlicz and Bergman-Orlicz spaces on $\mathbb{C}_{+}$.}

Let  $\Phi$ be a growth function and $F\in H^{\Phi}(\mathbb{C}_{+})$. Put 
$$  \|F\|_{H^{\Phi}}:=\sup_{y>0}\int_{\mathbb{R}}\Phi\left(| F(x+iy)|\right)dx.   $$
Let $\Phi \in \mathscr{C}^{1}(\mathbb{R}_{+})$ a growth function such that   $0< a_\Phi\leq b_\Phi <\infty$. We have the following inequalities
$$  \|F\|_{H^{\Phi}} \lesssim \max\left\{ \left(  \|F\|_{H^{\Phi}}^{lux} \right)^{a_\Phi}; \left( \|F\|_{H^{\Phi}}^{lux}\right)^{b_\Phi}\right\}   $$
and
$$  \|F\|_{H^{\Phi}}^{lux} \lesssim \max\left\{ \left( \|F\|_{H^{\Phi}}\right)^{1/a_\Phi} ;  \left( \|F\|_{H^{\Phi}}  \right)^{1/b_\Phi}\right\}.      $$

Let $\Omega$ be an open set of $\mathbb{C}$ and $F: \Omega \longrightarrow ]-\infty, +\infty]$ a function. We say that $F$ is subharmonic if the following assertions are satisfied:
\begin{itemize}
  \item[(i)] $F$ is upper semicontinuous on $\Omega$ $$ F(z_{0}) \geq \lim_{z\to z_{0}} F(z), ~~\forall~z_{0}\in \Omega,     $$  
 \item[(ii)]  for all $z_{0}\in \Omega$, there exists $r(z_{0})>0$ such that $\mathcal{D}(z_{0}, r(z_{0}))=\{z\in \Omega: |z-z_{0}| < r(z_{0})\}$ is contained in $\Omega$ and such that for all $r < r(z_{0})$
 \begin{equation}\label{eq:ualp5lepmn}
  F(z_{0})\leq \frac{1}{\pi r^{2}}\int \int_{|x+iy-z_{0}| < r}F(x+iy)dxdy
  .\end{equation}
\end{itemize} 

\begin{proposition}\label{pro:main6mq4}
Let $\Phi$ be a growth function such that  $\Phi(t)>0$ for all  $t>0$. If $\Phi$ is convex or belongs to $\mathscr{L}$ then for  $F \in H^{\Phi}(\mathbb{C}_{+})$, we have
 \begin{equation}\label{eq:ualp5leson}
 |F(x+iy)|\leq \Phi^{-1}\left(\frac{2}{\pi y}\right)\|F\|_{H^{\Phi}}^{lux}, ~~ \forall~x+iy \in \mathbb{C_{+}}.\end{equation}
 \end{proposition}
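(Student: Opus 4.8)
The plan is to prove the pointwise estimate
\begin{equation}\label{eq:plan-goal}
|F(x+iy)|\leq \Phi^{-1}\left(\frac{2}{\pi y}\right)\|F\|_{H^{\Phi}}^{lux}
\end{equation}
by exploiting the subharmonicity of a suitable function built from $F$, together with the sub-mean-value property \eqref{eq:ualp5lepmn}. First I would fix $z_0=x+iy\in\mathbb{C}_+$ and normalize, assuming without loss of generality that $\|F\|_{H^{\Phi}}^{lux}=1$ (the general case follows by rescaling $F$). The key observation is that since $\Phi$ is either convex or in $\mathscr{L}$, the composition $\Phi(|F|)$ is subharmonic on $\mathbb{C}_+$: indeed $|F|$ is subharmonic because $F$ is analytic, and $\Phi$ is nondecreasing; when $\Phi$ is convex this is the standard fact that a convex nondecreasing function of a subharmonic function is subharmonic, while for $\Phi\in\mathscr{L}$ one uses that $\log|F|$ is subharmonic and that $t\mapsto\Phi(e^t)$ has the appropriate monotonicity/convexity to preserve the mean-value inequality.

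Next I would apply the sub-mean-value inequality \eqref{eq:ualp5lepmn} to the subharmonic function $\Phi(|F|)$ on the disc $\mathcal{D}(z_0,r)$ with the largest admissible radius, namely $r=y=\mathrm{Im}(z_0)$, so that the disc stays inside $\mathbb{C}_+$. This gives
\begin{equation}\label{eq:plan-mvp}
\Phi(|F(z_0)|)\leq \frac{1}{\pi y^2}\int\!\!\int_{|x+it-z_0|<y}\Phi(|F(x+it)|)\,dx\,dt.
\end{equation}
Then I would bound the double integral over the disc by the integral over the horizontal strip $\{0<t<2y\}$ containing it, and use Fubini together with the definition of the Hardy-Orlicz norm: for each fixed height $t>0$ one has $\int_{\mathbb{R}}\Phi(|F(x+it)|)\,dx\leq 1$ since $\|F\|_{H^{\Phi}}^{lux}=1$. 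Integrating this over $t\in(0,2y)$ controls the strip integral by $2y$, and combining with \eqref{eq:plan-mvp} yields $\Phi(|F(z_0)|)\leq \frac{2y}{\pi y^2}=\frac{2}{\pi y}$. Applying $\Phi^{-1}$ (available since the relevant functions are homeomorphisms of $\mathbb{R}_+$) gives exactly \eqref{eq:plan-goal}.

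The main obstacle I anticipate is justifying the subharmonicity of $\Phi(|F|)$ uniformly across both cases $\Phi$ convex and $\Phi\in\mathscr{L}$, since for concave $\Phi\in\mathscr{L}$ one cannot directly invoke ``convex increasing composed with subharmonic.'' Here I would instead work with $u=\log|F|$, which is subharmonic, and write $\Phi(|F|)=\Psi(u)$ with $\Psi(s)=\Phi(e^s)$; the property that ensures $\Psi(u)$ is subharmonic is that $\Psi$ is convex and nondecreasing, which for $\Phi\in\mathscr{L}$ follows from the lower-type condition and the hypothesis $\Phi'(t)\approx\Phi(t)/t$ guaranteeing the required growth of $\Psi$. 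A secondary technical point is handling the boundary behavior and the replacement of the disc integral by the strip integral, but this is routine once subharmonicity is in place. The normalization argument also requires care when $\|F\|_{H^{\Phi}}^{lux}$ is attained only as an infimum, so I would argue with $\lambda>\|F\|_{H^{\Phi}}^{lux}$ arbitrary and pass to the limit at the end.
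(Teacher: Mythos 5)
Your overall strategy is the same as the paper's: a sub-mean-value inequality for $\Phi(|F|)$ on a disc of radius comparable to $y$, enlargement of the disc to a horizontal strip, Fubini, and the line-wise bound $\int_{\mathbb{R}}\Phi(|F(\cdot+iv)|)\,dx\le 1$; the convex case and the normalization/limiting remarks are fine. The genuine gap is in your justification of the key inequality when $\Phi\in\mathscr{L}$. You claim that $\Psi(s)=\Phi(e^{s})$ is convex, so that $\Phi(|F|)=\Psi(\log|F|)$ is subharmonic. Convexity of $\Psi$ is equivalent to $t\mapsto t\Phi'(t)$ being nondecreasing, and this does \emph{not} follow from the hypotheses defining $\mathscr{L}$: concavity, lower type $p$, and $\Phi'(t)\approx\Phi(t)/t$ only give $a_{\Phi}\Phi(t)\le t\Phi'(t)\le b_{\Phi}\Phi(t)$, i.e.\ $t\Phi'(t)$ is nondecreasing \emph{up to constants}, not monotone. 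For instance, $\Phi'(t)=t^{-1/2}h(t)$ with $h$ smooth, $1\le h\le 2$, and $h'<-h/(2t)$ on a short interval produces a concave $\Phi\in\mathscr{L}_{1/4}$ with $\Phi'\approx\Phi/t$ for which $t\Phi'(t)=\sqrt{t}\,h(t)$ is not monotone, so $\Phi(e^{s})$ is not convex. Thus your argument for the sub-mean-value property of $\Phi(|F|)$ breaks down precisely in the concave case, which is the new case this proposition is meant to cover.

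The conclusion you need is nevertheless true, and the paper's device repairs the step: instead of $\log|F|$, use the subharmonicity of $|F|^{\rho}$ with $\rho=a_{\Phi}$ (valid for any $\rho>0$ since $e^{\rho s}$ is convex increasing), and apply Jensen's inequality with the \emph{convex} function $\Phi_{\rho}(t)=\Phi\bigl(t^{1/\rho}\bigr)$; convexity of $\Phi_{\rho}$ is available because $a_{\Phi_{\rho}}\ge 1$ places it in $\mathscr{U}$ (Corollary \ref{pro:main3aaqq7}), whose elements are convex by the paper's standing assumptions. This yields
\begin{equation*}
\Phi(|F(z_{0})|)=\Phi_{\rho}\left(|F(z_{0})|^{\rho}\right)\le \Phi_{\rho}\left(\frac{1}{\pi r^{2}}\iint_{\overline{\mathcal{D}(z_{0},r)}}|F|^{\rho}\right)\le \frac{1}{\pi r^{2}}\iint_{\overline{\mathcal{D}(z_{0},r)}}\Phi(|F|),
\end{equation*}
after which your strip estimate goes through verbatim. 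If you replace your subharmonicity claim for $\Phi(|F|)$ by this two-step Jensen argument, the proof is complete.
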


\begin{proof}
For $t\geq 0$, put
 $$  \Phi_{\rho}(t)=\Phi\left(t^{1/\rho}\right),      $$
 where $\rho=1$ if $\Phi$ is convex and $\rho=a_{\Phi}$ if  $\Phi \in \mathscr{L}$.
By construction, $\Phi_{\rho}$ is a convex growth function. 
Let $0\not\equiv F \in H^{\Phi}(\mathbb{C}_{+})$, and $z_{0}=x_{0}+iy_{0}\in \mathbb{C_{+}}$ and  $r=\dfrac{y_{0}}{2}$. Since  $|F|^{\rho}$ is subharmonic on $\mathbb{C}_ {+}$, we have
$$ |F(z_{0})|^{\rho} \leq \frac{1}{\pi r^{2}}\int \int_{\overline{\mathcal{D}(z_{0}, r)}}|F(u+iv)|^{\rho}dudv,   $$
where $\mathcal{D}(z_{0}, r)$ is the disk centered at $z_{0}$ and of radius $r$. By  Jensen's inequality, it follows that
\begin{align*}
 \Phi\left(\frac{|F(z_{0})|}{\|F\|_{H^{\Phi}}^{lux}}\right) &\leq \Phi_{\rho}\left( \frac{1}{\pi r^{2}}\int \int_{\overline{\mathcal{D}(z_{0}, r)}}\left(\frac{|F(u+iv)|}{\|F\|_{H^{\Phi}}^{lux}}\right)^{\rho}dudv \right)\\
 &\leq\frac{1}{\pi r^{2}}\int \int_{\overline{\mathcal{D}(z_{0}, r)}}\Phi\left(\frac{|F(u+iv)|}{\|F\|_{H^{\Phi}}^{lux}}\right)dudv \\
 &\leq \frac{1}{\pi r^{2}}\int_{0}^{2r}\int_{\mathbb{R}}\Phi\left(\dfrac{|F(u+iv)|}{\|F\|_{H^{\Phi}}^{lux}}\right)du dv  
 \leq  \frac{1}{\pi r^{2}}\int_{0}^{2r}dv.
 \end{align*}
We deduce that $$ \Phi\left(\frac{|F(z_{0})|}{\|F\|_{H^{\Phi}}^{lux}}\right) \leq  \frac{2}{\pi r}, ~~\forall~r<y_{0}.       $$
 \end{proof}

 \begin{lemme}\label{pro:main6apaqmq4}
Let $\Phi$ be a growth function such that  $\Phi(t)>0$ for all  $t>0$. If $\Phi$ is convex or belongs to $\mathscr{L}$, then  for  $F \in H^{\Phi}(\mathbb{C}_{+})$ and for  $\beta>0$, we have 
\begin{equation}\label{eq:5aaqq6}
  \Phi(|F(z+i\beta)|) \leq  \frac{1}{\pi}\int_{\mathbb{R}}\frac{y}{(x-t)^{2}+y^{2}}\Phi(|F(t+i\beta)|)dt,~~\forall~ z=x+iy \in \mathbb{C_{+}}.
 \end{equation} 
\end{lemme}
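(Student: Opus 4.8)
The plan is to recognize the right-hand side as the Poisson integral over $\mathbb{R}$ of the boundary trace $t\mapsto\Phi(|F(t+i\beta)|)$, and to deduce the inequality from the fact that $\Phi(|F(\cdot+i\beta)|)$ is a bounded subharmonic function on $\overline{\mathbb{C}_+}$, hence dominated by its least harmonic majorant. Write $G(z):=F(z+i\beta)$, which is analytic on the half-plane $\{\mathrm{Im}\,z>-\beta\}$, in particular on a neighborhood of $\overline{\mathbb{C}_+}$. First I would show that $u:=\Phi(|G|)$ is subharmonic there. If $\Phi$ is convex this is immediate, since $|G|$ is subharmonic and the composition of a subharmonic function with a nondecreasing convex function is subharmonic. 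If $\Phi\in\mathscr{L}$, I would set $\rho=a_\Phi$ (so $0<\rho\le 1$) and write $u=\Phi_\rho(|G|^\rho)$ with $\Phi_\rho(t)=\Phi(t^{1/\rho})$; the function $\Phi_\rho$ is convex and nondecreasing, exactly as in the proof of Proposition~\ref{pro:main6mq4}, while $|G|^\rho$ is subharmonic for every $\rho>0$ because $F$ is analytic. Hence $u$ is subharmonic in both cases.

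Next I would record that $u$ is bounded and continuous on $\overline{\mathbb{C}_+}$. For $z=x+iy\in\overline{\mathbb{C}_+}$ one has $\mathrm{Im}(z+i\beta)\ge\beta>0$, so Proposition~\ref{pro:main6mq4} yields $|G(z)|\le\Phi^{-1}\!\left(\tfrac{2}{\pi\beta}\right)\|F\|_{H^{\Phi}}^{lux}=:M$, whence $0\le u\le\Phi(M)$ on $\overline{\mathbb{C}_+}$. In particular the boundary trace $u_0(t):=\Phi(|G(t)|)$ is bounded, so it lies in $L^1\!\left(\mathbb{R},\frac{dt}{1+t^2}\right)$ and the Poisson integral
$$ h(x+iy):=\frac{1}{\pi}\int_{\mathbb{R}}\frac{y}{(x-t)^2+y^2}\,u_0(t)\,dt $$
defines a bounded harmonic function on $\mathbb{C}_+$ whose boundary values coincide with the continuous function $u_0$ on $\mathbb{R}$. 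The desired inequality is exactly $u\le h$ on $\mathbb{C}_+$.

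To prove $u\le h$ I would form $v:=u-h$, which is subharmonic on $\mathbb{C}_+$, continuous on $\overline{\mathbb{C}_+}$, bounded above, and vanishes on $\mathbb{R}$. The statement $v\le 0$ is then a Phragm\'en--Lindel\"of comparison: exhausting $\mathbb{C}_+$ by the half-discs $D_R=\{|z|<R\}\cap\mathbb{C}_+$, I would compare $v$ on $D_R$ with the harmonic function equal to $0$ on the diameter and to $\sup_{\overline{\mathbb{C}_+}}v$ on the circular arc. The maximum principle for subharmonic functions gives $v(z_0)\le(\sup v)\,\omega_R(z_0)$, where $\omega_R(z_0)$ is the harmonic measure of the arc in $D_R$ seen from a fixed $z_0$; since $\omega_R(z_0)\to 0$ as $R\to\infty$, we obtain $v(z_0)\le 0$, that is $u(z_0)\le h(z_0)$, which is the claim.

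The main obstacle is the passage to infinity in this last step: the inequality $u\le h$ can fail for subharmonic functions of uncontrolled growth, and it is precisely the uniform bound $u\le\Phi(M)$ furnished by Proposition~\ref{pro:main6mq4} that makes the harmonic measure of the far arc tend to $0$ and thus forces the arc contribution to disappear. Equivalently, one may transport the problem to the unit disc through the Cayley transform and invoke the classical fact that a subharmonic function continuous on $\overline{\mathbb{D}}$ is dominated by the Poisson integral of its boundary values, the boundedness again being what controls the image of the point at infinity.
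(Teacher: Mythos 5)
Your proof is correct, and it rests on the same two pillars as the paper's: the boundedness of $F(\cdot+i\beta)$ on $\overline{\mathbb{C}_+}$ supplied by Proposition \ref{pro:main6mq4}, and the convexity of the auxiliary function $\Phi_{\rho}(t)=\Phi(t^{1/\rho})$ with $\rho\in\{1,a_{\Phi}\}$. The difference is one of organization. The paper applies the half-plane majorization principle (cited as Corollary 10.15 of Mashreghi) to the bounded subharmonic function $|F(\cdot+i\beta)|^{\rho}$, obtaining the Poisson bound for $|F(z+i\beta)|^{\rho}$, and only then pushes $\Phi_{\rho}$ inside the integral by Jensen's inequality. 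You instead use the convexity of $\Phi_{\rho}$ upstream, to show that $\Phi(|F(\cdot+i\beta)|)=\Phi_{\rho}(|F(\cdot+i\beta)|^{\rho})$ is itself subharmonic, and then apply the majorization principle directly to this composite, supplying a self-contained Phragm\'en--Lindel\"of proof (half-disc exhaustion and harmonic measure of the far arc) in place of the citation. Your route buys a proof with no Jensen step at the end and makes explicit where boundedness is indispensable; the paper's route is shorter because the comparison with the Poisson integral is quoted rather than proved. Both arguments are sound and rely on exactly the same hypotheses.
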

 
 \begin{proof}
For $t\geq 0$, put
 $$  \Phi_{\rho}(t)=\Phi\left(t^{1/\rho}\right),      $$
 where $\rho=1$ if $\Phi$ is convex and $\rho=a_{\Phi}$ if  $\Phi \in \mathscr{L}$.\\
Let $0\not\equiv F \in H^{\Phi}(\mathbb{C}_{+})$ and  $\beta>0$. For $z\in \mathbb{C_{+}}$, put 
 $$  U_{\beta}(z) = |F(z+i\beta)|^{\rho}.         $$
By construction,  $U_{\beta}$ is continuous on $\overline{\mathbb{C_{+}}} := \mathbb{C_{+}} \cup \mathbb{R}$ and subharmonic on $\mathbb{C_{+}}$. For $z=x+iy \in \overline{\mathbb{C_{+}}}$, we have
$$|U_{\beta}(z)|= |F(x+i(y+\beta))|^{\rho}\leq \left(  \Phi^{-1}\left(\frac{2}{\pi (y+\beta)}\right)\|F\|_{H^{\Phi}}^{lux} \right)^{\rho}
 \leq \left(\Phi^{-1}\left(\frac{2}{\pi \beta}\right)\|F\|_{H^{\Phi}}^{lux} \right)^{\rho},      $$
 according to  Proposition \ref{pro:main6mq4}. We deduce that $U_{\beta}$ is bounded on $\overline{\mathbb{C_{+}}}$. It follows that 
$$|F(z+i\beta)|^{\rho} \leq\frac{1}{\pi}\int_{\mathbb{R}}\frac{y}{(x-t)^{2}+y^{2}}|F(t+i\beta)|^{\rho}dt,~~\forall~z= x+iy \in \mathbb{C_{+}}, $$
thanks to  \cite[Corollary 10.15]{javadmas}. Since $\Phi_{\rho}$ is convex, by Jensen's inequality we deduce that
$$  \Phi(|F(z+i\beta)|) \leq  \frac{1}{\pi}\int_{\mathbb{R}}\frac{y}{(x-t)^{2}+y^{2}}\Phi(|F(t+i\beta)|)dt,~~\forall~z= x+iy \in \mathbb{C_{+}}.      $$
\end{proof}

 \begin{proposition}\label{pro:main6apmqaaqq4}
  Let $\Phi$ be a growth function such that  $\Phi(t)>0$ for all  $t>0$ and $F$ an analytic function on $\mathbb{C}_{+}$. If $\Phi$ is convex or belongs to $\mathscr{L}$, then  the following assertions are equivalent.
  \begin{itemize}
   \item[(i)]  $F\in H^{\Phi}(\mathbb{C}_{+})$. 
   \item[(ii)] The function $y\mapsto \|F(.+iy)\|_{L^{\Phi}}^{lux}$ is non-increasing on $\mathbb{R_{+}^{*}}$ and $\lim_{y \to 0}\|F(.+iy)\|_{L^{\Phi}}^{lux}< \infty.$
  \end{itemize}
  Moreover, 
  $$ \|F\|_{H^{\Phi}}^{lux} =   \lim_{y \to 0}\|F(.+iy)\|_{L^{\Phi}}^{lux}.         $$ 
   \end{proposition}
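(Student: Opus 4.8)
The plan is to reduce everything to a single monotonicity statement. By definition of the Hardy--Orlicz norm, the inner infimum appearing in $\|F\|_{H^{\Phi}}^{lux}$ is exactly $\|F(\cdot+iy)\|_{L^{\Phi}}^{lux}$, so that
\[
\|F\|_{H^{\Phi}}^{lux}=\sup_{y>0}\|F(\cdot+iy)\|_{L^{\Phi}}^{lux}.
\]
Consequently the whole proposition follows once I show that, for $F\in H^{\Phi}(\mathbb{C_{+}})$, the map $y\mapsto \|F(\cdot+iy)\|_{L^{\Phi}}^{lux}$ is non-increasing on $\mathbb{R_{+}^{*}}$: the equivalence of (i) and (ii) and the identity $\|F\|_{H^{\Phi}}^{lux}=\lim_{y\to 0}\|F(\cdot+iy)\|_{L^{\Phi}}^{lux}$ are then purely formal facts about suprema of non-increasing functions.

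First I would establish a modular monotonicity. Fix $\beta>0$ and $y>0$. Applying Lemma \ref{pro:main6apaqmq4} gives the pointwise Poisson-type estimate for $\Phi(|F(x+i(y+\beta))|)$ in terms of the values at height $\beta$; integrating in $x$ over $\mathbb{R}$, invoking Tonelli's theorem to exchange the order of integration, and using that $\int_{\mathbb{R}}\frac{y}{(x-t)^{2}+y^{2}}\,dx=\pi$, I obtain
\[
\int_{\mathbb{R}}\Phi\bigl(|F(x+i(y+\beta))|\bigr)\,dx\leq \int_{\mathbb{R}}\Phi\bigl(|F(t+i\beta)|\bigr)\,dt.
\]

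Next I would upgrade this to the level of Luxemburg norms. Given $\lambda>\|F(\cdot+i\beta)\|_{L^{\Phi}}^{lux}$, the function $F/\lambda$ again lies in $H^{\Phi}(\mathbb{C_{+}})$ (indeed $\|F/\lambda\|_{H^{\Phi}}^{lux}=\|F\|_{H^{\Phi}}^{lux}/\lambda$), so Lemma \ref{pro:main6apaqmq4} applies to it and the previous step yields
\[
\int_{\mathbb{R}}\Phi\left(\frac{|F(x+i(y+\beta))|}{\lambda}\right)dx\leq\int_{\mathbb{R}}\Phi\left(\frac{|F(t+i\beta)|}{\lambda}\right)dt\leq 1.
\]
This forces $\|F(\cdot+i(y+\beta))\|_{L^{\Phi}}^{lux}\leq\lambda$, and letting $\lambda$ decrease to $\|F(\cdot+i\beta)\|_{L^{\Phi}}^{lux}$ gives $\|F(\cdot+i(y+\beta))\|_{L^{\Phi}}^{lux}\leq\|F(\cdot+i\beta)\|_{L^{\Phi}}^{lux}$. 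As $\beta>0$ and $y>0$ are arbitrary, the map $s\mapsto\|F(\cdot+is)\|_{L^{\Phi}}^{lux}$ is non-increasing.

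Finally I would assemble the equivalence. For (i)$\Rightarrow$(ii), membership in $H^{\Phi}$ means $\sup_{y>0}\|F(\cdot+iy)\|_{L^{\Phi}}^{lux}<\infty$; together with the monotonicity this shows the map is non-increasing and bounded, so $\lim_{y\to 0}\|F(\cdot+iy)\|_{L^{\Phi}}^{lux}$ exists, is finite and equals the supremum. For (ii)$\Rightarrow$(i), monotonicity gives $\sup_{y>0}=\lim_{y\to 0}<\infty$, whence $\|F\|_{H^{\Phi}}^{lux}<\infty$ and $F\in H^{\Phi}(\mathbb{C_{+}})$; the \emph{moreover} identity is immediate in either case. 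The only genuinely delicate points are verifying the hypotheses of Lemma \ref{pro:main6apaqmq4} for the rescaled function $F/\lambda$ and justifying the Tonelli interchange; the convexity or lower-type assumption on $\Phi$ is used precisely to make that lemma available, after which the argument is a routine manipulation of monotone suprema.
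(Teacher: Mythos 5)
Your proof is correct and follows essentially the same route as the paper: both derive the monotonicity of $y\mapsto \|F(\cdot+iy)\|_{L^{\Phi}}^{lux}$ from the subharmonicity estimate of Lemma \ref{pro:main6apaqmq4} combined with Fubini/Tonelli and the fact that the Poisson kernel integrates to $1$, and then read off the equivalence and the limit identity from the definition $\|F\|_{H^{\Phi}}^{lux}=\sup_{y>0}\|F(\cdot+iy)\|_{L^{\Phi}}^{lux}$. Your extra care in passing from the modular inequality to the Luxemburg norm (taking $\lambda>\|F(\cdot+i\beta)\|_{L^{\Phi}}^{lux}$ and letting $\lambda$ decrease) is a slightly more explicit version of the normalization the paper performs directly.
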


\begin{proof}
The implication $(ii) \Rightarrow (i)$ is immediate.\\ Let us now show that (i) implies (ii).
Suppose that $F\not\equiv 0$ is non-identically zero because there is nothing to show when $F\equiv 0$. 
Let  $0<y_{1}<y_{2}$. According to Lemma \ref{pro:main6apaqmq4} and Fubbini's theorem, we have
\begin{align*}
\int_{\mathbb{R}}\Phi\left(\frac{|F(x+iy_{2})|}{\|F(.+iy_{1})\|_{L^{\Phi}}^{lux}}\right)dx
&=\int_{\mathbb{R}}\Phi\left(\frac{|F(x+i(y_{2}-y_{1})+iy_{1})|}{\|F(.+iy_{1})\|_{L^{\Phi}}^{lux}}\right)dx \\
&\leq \int_{\mathbb{R}}\frac{1}{\pi}\int_{\mathbb{R}}\frac{(y_{2}-y_{1})}{(x-t)^{2}+(y_{2}-y_{1})^{2}}\Phi\left(\frac{|F(t+iy_{1})|}{\|F(.+iy_{1})\|_{L^{\Phi}}^{lux}}\right)dt dx \\
&=\int_{\mathbb{R}}\Phi\left(\frac{|F(t+iy_{1})|}{\|F(.+iy_{1})\|_{L^{\Phi}}^{lux}}\right)\left(  \frac{1}{\pi}\int_{\mathbb{R}}\frac{(y_{2}-y_{1})}{(x-t)^{2}+(y_{2}-y_{1})^{2}} dx\right)dt\\
&= \int_{\mathbb{R}}\Phi\left(\frac{|F(t+iy_{1})|}{\|F(.+iy_{1})\|_{L^{\Phi}}^{lux}}\right) dt  
\leq 1.  
\end{align*}
We deduce that  $  \|F(.+iy_{2})\|_{L^{\Phi}}^{lux} \leq \|F(.+iy_{1})\|_{L^{\Phi}}^{lux}. $
Therefore,  $$ \sup_{y>0}\|F(.+iy)\|_{L^{\Phi}}^{lux}=    \lim_{y \to 0}\|F(.+iy)\|_{L^{\Phi}}^{lux}.     $$
\end{proof}

Let $\Phi$ be a  growth function. The Hardy space on $\mathbb{D}$, $H^{\Phi}(\mathbb{D})$ is the set of analytic function $G$ on  $\mathbb{D}$ which satisfy
$$ \|G\|_{H^{\Phi}(\mathbb{D})}^{lux}:=\sup_{0\leq r<1 }\inf\left\{\lambda>0 : \frac{1}{2\pi}\int_{0}^{2\pi}\Phi\left(\frac{|G(re^{i\theta})|}{\lambda}\right)d\theta \leq 1  \right\}<\infty.    
    $$
Let $\Phi$ be a  growth function. If $\Phi$ is convex or belongs to $\mathscr{L}$ then for some $\rho \in \{1; a_{\Phi}\} $, 
\begin{equation}\label{eq:suqiaqsq8n}
H^{\Phi}(\mathbb{D})  \subseteq H^{\rho}(\mathbb{D}). 
\end{equation}

The proof of the following result is identical to that of \cite[Theorem 3.11]{djesehaqbAZ}. Therefore, the proof will be omitted.

\begin{theorem}\label{pro:main5Qpm5}
  Let $\Phi$ be a growth function such that  $\Phi(t)>0$ for all  $t>0$. If $\Phi$ is convex or belongs to $\mathscr{L}$, then for $F\in H^{\Phi}(\mathbb{C_{+}})$, the function $G$ defined by
  $$   G(\omega)= F\left(i\frac{1-\omega}{1+\omega} \right), ~~\forall~\omega \in \mathbb{D},    $$
  is in $H^{\Phi}(\mathbb{D})$. Moreover,
  $$  \|G\|_{H^{\Phi}(\mathbb{D})} \leq  \|F\|_{H^{\Phi}(\mathbb{C_{+}})}^{lux}.     $$
  \end{theorem}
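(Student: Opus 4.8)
The plan is to prove the Luxemburg bound $\|G\|_{H^{\Phi}(\mathbb{D})}^{lux}\le \|F\|_{H^{\Phi}(\mathbb{C}_{+})}^{lux}$ by reducing it to a single estimate on each circle $\{|\omega|=r\}$ and then transporting a harmonic majorant through the Cayley map $\varphi(\omega)=i\frac{1-\omega}{1+\omega}$. First I would fix $\lambda>\|F\|_{H^{\Phi}(\mathbb{C}_{+})}^{lux}$. By Proposition~\ref{pro:main6apmqaaqq4} one has $\|F(\cdot+iy)\|_{L^{\Phi}}^{lux}\le\|F\|_{H^{\Phi}}^{lux}<\lambda$ for every $y>0$, hence $\int_{\mathbb{R}}\Phi\!\left(\frac{|F(x+iy)|}{\lambda}\right)dx\le1$ for all $y>0$. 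It therefore suffices to show that $\frac{1}{2\pi}\int_{0}^{2\pi}\Phi\!\left(\frac{|G(re^{i\theta})|}{\lambda}\right)d\theta\le1$ for every $r\in[0,1)$; granting this, the definition of the disc norm gives $\|G\|_{H^{\Phi}(\mathbb{D})}^{lux}\le\lambda$, and letting $\lambda\downarrow\|F\|_{H^{\Phi}}^{lux}$ finishes the proof.

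Next I would set $h:=\Phi(|F|/\lambda)$ and, as in Proposition~\ref{pro:main6mq4} and Lemma~\ref{pro:main6apaqmq4}, use the convex growth function $\Phi_{\rho}(t)=\Phi(t^{1/\rho})$ with $\rho=1$ if $\Phi$ is convex and $\rho=a_{\Phi}$ if $\Phi\in\mathscr{L}$, so that $h=\Phi_{\rho}(|F/\lambda|^{\rho})$ is subharmonic on $\mathbb{C}_{+}$. The key step is to produce a nonnegative harmonic function $w$ on $\mathbb{C}_{+}$ with $h\le w$ and $w(i)\le\frac{1}{\pi}$. For each $\beta>0$, Lemma~\ref{pro:main6apaqmq4} applied to $F/\lambda$ gives $h(z+i\beta)\le w_{\beta}(z)$, where $w_{\beta}(x+iy):=\frac{1}{\pi}\int_{\mathbb{R}}\frac{y}{(x-t)^{2}+y^{2}}\,h(t+i\beta)\,dt$ is the Poisson integral of $h(\cdot+i\beta)\in L^{1}(\mathbb{R})$ (of mass $\le1$); each $w_{\beta}$ is harmonic, nonnegative, and satisfies $w_{\beta}(x+iy)\le\frac{1}{\pi y}$. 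By Harnack's principle this locally bounded family is normal, so along some $\beta_{n}\to0$ we have $w_{\beta_{n}}\to w$ locally uniformly with $w$ harmonic and $\ge0$. Continuity of $F$ on $\mathbb{C}_{+}$ gives $h(z)=\lim_{n}h(z+i\beta_{n})\le\lim_{n}w_{\beta_{n}}(z)=w(z)$, while $w(i)=\lim_{n}w_{\beta_{n}}(i)\le\frac{1}{\pi}\sup_{n}\int_{\mathbb{R}}h(t+i\beta_{n})\,dt\le\frac{1}{\pi}$.

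Finally I would transport this majorant to the disc. Since $\varphi$ is holomorphic from $\mathbb{D}$ onto $\mathbb{C}_{+}$ with $\varphi(0)=i$, the function $U:=w\circ\varphi$ is harmonic and nonnegative on $\mathbb{D}$, and $\Phi(|G|/\lambda)=h\circ\varphi\le U$. The mean value property of harmonic functions then yields
\[
\frac{1}{2\pi}\int_{0}^{2\pi}\Phi\!\left(\frac{|G(re^{i\theta})|}{\lambda}\right)d\theta\le\frac{1}{2\pi}\int_{0}^{2\pi}U(re^{i\theta})\,d\theta=U(0)=w(i)\le\frac{1}{\pi}\le1,
\]
which is exactly the per-circle estimate required above.

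The main obstacle is the construction of the harmonic majorant in the second step, i.e. passing to the limit $\beta\to0$. Working with the Poisson integrals $w_{\beta}$ and invoking Harnack's principle is what lets me avoid identifying the boundary values of $F$ and checking uniform integrability of $\{h(\cdot+i\beta)\}_{\beta}$; the argument uses only the interior continuity of $F$, the pointwise bound $w_{\beta}(x+iy)\le\frac{1}{\pi y}$ coming from the trivial estimate $\frac{y}{(x-t)^{2}+y^{2}}\le\frac{1}{y}$ on the Poisson kernel, and the uniform mass bound $\int_{\mathbb{R}}h(t+i\beta)\,dt\le1$ furnished by the first step. Everything else is routine.
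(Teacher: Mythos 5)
Your argument is correct. Note first that the paper itself gives no proof of this statement: it defers to \cite[Theorem 3.11]{djesehaqbAZ}, so there is nothing internal to compare against; judged on its own terms, your harmonic-majorant construction is sound and uses only tools already available in the paper (Proposition \ref{pro:main6apmqaaqq4} for the uniform modular bound $\int_{\mathbb{R}}\Phi(|F(x+iy)|/\lambda)\,dx\le 1$, Lemma \ref{pro:main6apaqmq4} for $h(z+i\beta)\le w_{\beta}(z)$, and the convexity of $\Phi_{\rho}$ from Corollary \ref{pro:main3aaqq7}). The limit passage $\beta_{n}\to 0$ is the only delicate point and you handle it correctly: the bound $w_{\beta}(x+iy)\le \frac{1}{\pi y}$ gives local uniform boundedness, hence normality (strictly speaking this is the compactness of locally bounded harmonic families via interior estimates rather than ``Harnack's principle'', but that is terminology, not substance), and interior continuity of $F$ suffices to pass the pointwise inequality to the limit, so you genuinely avoid any discussion of boundary values. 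Two small remarks. First, what you prove is $\|G\|_{H^{\Phi}(\mathbb{D})}^{lux}\le\|F\|_{H^{\Phi}(\mathbb{C}_{+})}^{lux}$, whereas the statement's left-hand side is written without the superscript $lux$; the stated inequality compares a modular with a Luxemburg norm and is almost certainly a typo for the norm inequality you establish (your constant $1/\pi$ even gives a little room), and in any case membership $G\in H^{\Phi}(\mathbb{D})$ is all that is used later in the proof of Theorem \ref{pro:mainfaaqaqaqq5}. Second, the degenerate case $F\equiv 0$ should be dispatched separately since there is then no $\lambda>\|F\|_{H^{\Phi}}^{lux}=0$ to let tend to the norm from above in a meaningful way, but that case is trivial.
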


Denote by $B$  the function B\^eta defined by
 $$  B(m,n) =\int_{0}^{\infty}\dfrac{u^{m-1}}{(1+u)^{m+n}}du , ~~\forall~ m, n > 0.   $$
 
 The following results can be found for example in \cite{bansahsehba}.
  
 \begin{lemme}\label{pro:main26}
  Let  $y > 0$ and $\alpha \in \mathbb{R}$. The integral 
 $$ \textit{J}_{\alpha}(y) =\int_{\mathbb{R}}\dfrac{dx}{|x+iy|^{\alpha}},    $$ 
 converges if and only if $\alpha > 1$. In this case,
 $$ \textit{J}_{\alpha}(y)=B\left(\frac{1}{2}, \frac{\alpha-1}{2} \right)y^{1-\alpha}.    $$
\end{lemme}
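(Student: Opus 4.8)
The plan is to reduce the integral to the Beta integral by an elementary double change of variables, handling the convergence question simultaneously. First I would write $|x+iy|^{\alpha} = (x^2+y^2)^{\alpha/2}$, so that
$$ \textit{J}_{\alpha}(y) = \int_{\mathbb{R}} \frac{dx}{(x^2+y^2)^{\alpha/2}}. $$
Since $y > 0$, the integrand is continuous and bounded on every compact set, so the only question about convergence is the behavior as $|x| \to \infty$, where the integrand is comparable to $|x|^{-\alpha}$. This shows that the integral converges if and only if $\alpha > 1$, and this threshold will later have to be matched against the positivity condition coming from the Beta function.

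For the explicit value, I would first exploit the evenness of the integrand to write $\textit{J}_{\alpha}(y) = 2\int_0^{\infty}(x^2+y^2)^{-\alpha/2}\,dx$, then factor $x^2 + y^2 = y^2\left(1 + (x/y)^2\right)$ and substitute $t = x/y$ to obtain
$$ \textit{J}_{\alpha}(y) = 2\, y^{1-\alpha} \int_0^{\infty} \frac{dt}{(1+t^2)^{\alpha/2}}. $$
The remaining integral is then brought to Beta form by the substitution $u = t^2$, so that $dt = \tfrac{1}{2}u^{-1/2}\,du$ and the integral becomes $\tfrac{1}{2}\int_0^{\infty} u^{-1/2}(1+u)^{-\alpha/2}\,du$. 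Comparing this with the definition $B(m,n) =\int_{0}^{\infty} u^{m-1}(1+u)^{-(m+n)}\,du$ and matching exponents via $m-1 = -\tfrac{1}{2}$ and $m+n = \tfrac{\alpha}{2}$ gives $m = \tfrac{1}{2}$ and $n = \tfrac{\alpha-1}{2}$, whence the integral equals $\tfrac{1}{2}B\!\left(\tfrac{1}{2}, \tfrac{\alpha-1}{2}\right)$. Combining the two displays yields $\textit{J}_{\alpha}(y) = B\!\left(\tfrac{1}{2}, \tfrac{\alpha-1}{2}\right) y^{1-\alpha}$, as claimed.

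There is no substantial obstacle here: the argument is a routine pair of changes of variables. The only point requiring a word of care is to verify that the convergence threshold read off from the tail analysis, namely $\alpha > 1$, coincides with the condition $n > 0$ under which the Beta integral $B\!\left(\tfrac{1}{2}, \tfrac{\alpha-1}{2}\right)$ is finite; this indeed holds since $\tfrac{\alpha-1}{2} > 0 \iff \alpha > 1$, so the two characterizations are consistent.
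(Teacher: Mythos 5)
Your computation is correct and complete: the reduction to $\int_{\mathbb{R}}(x^2+y^2)^{-\alpha/2}\,dx$, the tail comparison with $|x|^{-\alpha}$ for the convergence threshold, and the successive substitutions $t=x/y$ and $u=t^2$ matching the paper's definition of $B(m,n)$ with $m=\tfrac12$, $n=\tfrac{\alpha-1}{2}$ all check out. The paper itself does not prove this lemma (it simply cites a reference), but your argument is exactly the standard one that would be expected there, so there is nothing to fault.
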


 \begin{lemme}\label{pro:main27}
 Let $\alpha, \beta \in \mathbb{R}$ and $t>0$. The integral 	
 \begin{equation}\label{eq:alfdyafintgama}
 \textit{I}(t) =\int_{0}^{\infty}\dfrac{y^{\alpha}}{(t+y)^{\beta}}dy, \end{equation}		
 converges if and only if $\alpha > -1$ and $\beta>\alpha+1$. In this case,
 \begin{equation}\label{eq:alfdybeitete}
 \textit{I}(t)=B(1+\alpha, \beta-\alpha-1)t^{-\beta+\alpha+1}.\end{equation}	
 \end{lemme}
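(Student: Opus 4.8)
The plan is to reduce $\textit{I}(t)$ to the Beta integral appearing in the definition of $B$ by a single scaling substitution, after first settling the convergence question through a local analysis of the integrand at the two endpoints $0$ and $+\infty$.

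First I would analyze convergence. Splitting the integral as $\int_0^\infty = \int_0^1 + \int_1^\infty$, I observe that for fixed $t>0$, near $y=0$ the integrand $\frac{y^\alpha}{(t+y)^\beta}$ is comparable to $t^{-\beta} y^\alpha$ (since $t+y\to t$ there), which is integrable on $(0,1]$ precisely when $\alpha > -1$; near $y=+\infty$ the integrand is comparable to $y^{\alpha-\beta}$, which is integrable on $[1,\infty)$ precisely when $\alpha-\beta < -1$, that is, $\beta > \alpha+1$. Hence $\textit{I}(t)$ converges if and only if $\alpha > -1$ and $\beta > \alpha+1$, as claimed.

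Next, assuming these two conditions, I would compute the value via the substitution $y = tu$, $dy = t\,du$, which sends $(0,\infty)$ onto itself and gives $t+y = t(1+u)$. This yields
\begin{equation*}
\textit{I}(t) = \int_0^\infty \frac{(tu)^\alpha}{\left(t(1+u)\right)^\beta}\, t\, du = t^{\alpha-\beta+1}\int_0^\infty \frac{u^\alpha}{(1+u)^\beta}\, du.
\end{equation*}

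Finally I would identify the remaining integral with a value of $B$. Matching the exponents in the definition $B(m,n) = \int_0^\infty \frac{u^{m-1}}{(1+u)^{m+n}}\,du$ forces $m-1 = \alpha$ and $m+n = \beta$, i.e. $m = 1+\alpha$ and $n = \beta-\alpha-1$, both strictly positive under our hypotheses; thus $\int_0^\infty \frac{u^\alpha}{(1+u)^\beta}\,du = B(1+\alpha, \beta-\alpha-1)$. Substituting this back produces $\textit{I}(t) = B(1+\alpha, \beta-\alpha-1)\, t^{-\beta+\alpha+1}$, the desired formula. There is no genuine obstacle here: the only point needing a little care is the legitimacy of the scaling substitution, which is guaranteed once convergence has been established in the first step.
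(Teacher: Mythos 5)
Your proof is correct and complete: the two-sided endpoint comparisons settle the ``if and only if'' in both directions, and the scaling $y=tu$ reduces the integral exactly to the paper's definition of $B(1+\alpha,\beta-\alpha-1)$. The paper itself gives no proof of this lemma (it is quoted from the reference on Hilbert-type operators), and your argument is the standard one that the cited source uses, so there is nothing to add.
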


Nevanlinna's class on $\mathbb{C}_{+}$, $\mathscr{N}(\mathbb{C}_{+})$ is the set of holomorphic functions $F$ on $\mathbb{ C}_{+}$ such that
$$  \sup_{y> 0}\int_{\mathbb{R}}\log\left( 1+|F(x+iy)|  \right)dx< \infty.
   $$
For $0\not\equiv F\in \mathscr{N}(\mathbb{C}_{+})$,  there exists a unique function $f$ measurable on $\mathbb{R}$ such that $\log |f| \in L^{1}\left(\mathbb{R},\frac{dt}{1+t^{2}}\right)$ and
$$ \lim_{y \to 0}F(x+iy)=f(x),    $$
     for almost all $x\in \mathbb{R}$, (see \cite{Mochizuki}).

  \begin{proposition}\label{pro:main0aplaq0}
 Let  $\Phi \in \mathscr{C}^{1}(\mathbb{R}_{+})$ be a growth function such that   $0< a_\Phi\leq b_\Phi <\infty$. The following assertions are satisfied. 
 \begin{itemize}
 \item[(i)] If  $0< a_\Phi\leq b_\Phi \leq 1$, then $H^{\Phi}(\mathbb{C}_{+}) \subset \mathscr{N}(\mathbb{C}_{+})$. 
 \item[(ii)] If  $1< a_\Phi\leq b_\Phi <\infty$, then $H^{\Phi}(\mathbb{C}_{+}) \not\subset \mathscr{N}(\mathbb{C}_{+})$. 
 \end{itemize}
 \end{proposition}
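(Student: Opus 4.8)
The plan is to reduce both parts to the two-sided power bounds on $\Phi$ furnished by property (iii) of the excerpt: since $t\mapsto \Phi(t)/t^{a_\Phi}$ is increasing and $t\mapsto\Phi(t)/t^{b_\Phi}$ is decreasing, evaluating at $t=1$ gives $\Phi(t)\ge\Phi(1)t^{a_\Phi}$ and $\Phi(t)\le\Phi(1)t^{b_\Phi}$ for $t\ge1$, together with the reversed inequalities $\Phi(t)\le\Phi(1)t^{a_\Phi}$ and $\Phi(t)\ge\Phi(1)t^{b_\Phi}$ for $0<t\le1$. Note $\Phi(1)>0$, because $a_\Phi>0$ forces $\Phi$ to be a homeomorphism of $\mathbb{R}_+$.

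For (i) I would first prove the global comparison $\log(1+t)\le C\Phi(t)$ for all $t\ge 0$. For $t\ge1$ I use $\Phi(t)\ge\Phi(1)t^{a_\Phi}$ together with $\log(1+t)\lesssim t^{a_\Phi}$ on $[1,\infty)$, which holds because $a_\Phi>0$. For $0<t\le1$ I use $\log(1+t)\le t$ and the fact that $b_\Phi\le1$ gives $t^{b_\Phi}\ge t$ there, so $\Phi(t)\ge\Phi(1)t^{b_\Phi}\ge\Phi(1)t\ge\Phi(1)\log(1+t)$. Integrating this pointwise inequality at each height $y$ and taking the supremum yields
\[
\sup_{y>0}\int_{\mathbb R}\log(1+|F(x+iy)|)\,dx\le C\sup_{y>0}\int_{\mathbb R}\Phi(|F(x+iy)|)\,dx=C\|F\|_{H^{\Phi}}.
\]
The right-hand side is finite by the comparison stated in the excerpt between the modular quantity $\|F\|_{H^{\Phi}}$ and powers of $\|F\|_{H^{\Phi}}^{lux}<\infty$, so $F\in\mathscr N(\mathbb C_+)$.

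For (ii) the governing phenomenon is that when $a_\Phi>1$ the function $\Phi$ vanishes strictly faster than linearly at the origin, whereas $\log(1+t)\sim t$; hence control of $\int\Phi(|F|)$ over the infinite-measure line cannot control $\int\log(1+|F|)$ in the region where $|F|$ is small. I would exhibit the explicit function $F(z)=(z+i)^{-1}$, holomorphic on $\mathbb C_+$ since $\mathrm{Im}(z+i)>0$, for which $|F(x+iy)|=(x^2+(y+1)^2)^{-1/2}$. As this is decreasing in $y$, the supremum defining $\|F\|_{H^{\Phi}}^{lux}$ is attained as $y\to0^+$, and for large $|x|$ the small-argument bound gives $\Phi(|F|/\lambda)\lesssim(\lambda|x|)^{-a_\Phi}$, which is integrable precisely because $a_\Phi>1$; dominated convergence ($\Phi(|F|/\lambda)\le\Phi(|F|)$ for $\lambda\ge1$) then lets me pick $\lambda$ large enough that the integral is at most $1$, so $F\in H^{\Phi}(\mathbb C_+)$. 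On the other hand, for every fixed $y$ the integrand $\log(1+|F(x+iy)|)$ behaves like $|x|^{-1}$ as $|x|\to\infty$, whence $\int_{\mathbb R}\log(1+|F(x+iy)|)\,dx=+\infty$ for all $y$; thus $F\notin\mathscr N(\mathbb C_+)$ and $H^{\Phi}(\mathbb C_+)\not\subset\mathscr N(\mathbb C_+)$.

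The power comparisons are routine; the main obstacle is the construction in (ii), where the decay rate must be chosen so that $\Phi(|F|)$ is integrable at infinity while $\log(1+|F|)$ is not. This succeeds exactly at the borderline decay $|F|\sim|x|^{-1}$ and uses the hypothesis $a_\Phi>1$ in an essential way.
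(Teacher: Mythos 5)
Your proof is correct and follows essentially the same route as the paper: part (i) rests on the same two-sided power bounds $\Phi(1)t^{a_\Phi}\le\Phi(t)$ for $t\ge 1$ and $\Phi(1)t^{b_\Phi}\le\Phi(t)$ for $t\le 1$ (the paper merely phrases this by splitting $F$ into its small and large parts rather than proving the single pointwise inequality $\log(1+t)\lesssim\Phi(t)$), and part (ii) uses the same family of counterexamples $(z+i)^{-\alpha}$, with your choice $\alpha=1$ in place of the paper's $\alpha\in(1/a_\Phi,1)$, both of which work since $a_\Phi>1$ makes $\Phi(|F|)\lesssim|x|^{-\alpha a_\Phi}$ integrable while $\log(1+|F|)\sim|x|^{-\alpha}$ is not.
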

 
 \begin{proof}
 $(i)$ For  $0\not\equiv F\in H^{\Phi}(\mathbb{C}_{+})$, put  $$ F_{1}=F\chi_{0<\{|F|\leq 1\}}  \hspace*{0.25cm}\textrm{and} \hspace*{0.25cm} F_{2}=F\chi_{\{|F|\geq 1\}}.  $$
For  $z \in \mathbb{C}_{+}$, we have
$$   \log(1+|F_{1}(z)|) \leq |F_{1}(z)| \leq |F_{1}(z)|^{b_\Phi} \leq \frac{1}{\Phi(1)}\times \Phi(|F_{1}(z)|)   $$
and
  $$   \log(1+|F_{2}(z)|)= \frac{1}{a_\Phi}\log(1+|F_{2}(z)|)^{a_\Phi}\leq \frac{2^{a_\Phi}}{a_\Phi} |F_{2}(z)|^{a_\Phi} \leq \frac{2^{a_\Phi}}{a_\Phi}\frac{1}{\Phi(1)}\times \Phi(|F_{2}(z)|),   $$ 
since the function $t\mapsto \frac{\Phi(t)}{t^{a_\Phi}}$ (resp. $t\mapsto \frac{\Phi(t)}{t^{b_\Phi} }$)
  is non-decreasing (resp. non-increasing) on $\mathbb{R}_{+}^{*}$. Using the sub-additivity of the logarithmic function on $(1,\infty)$, we deduce that 
 $$ \log(1+|F(z)|)  \lesssim \log(1+|F_{1}(z)|+|F_{2}(z)|) \lesssim  \left(  \Phi(|F_{1}(z)|)+  \Phi(|F_{2}(z)|)  \right).    $$  
It follows that $F\in  \mathscr{N}(\mathbb{C}_{+})$. Indeed,  for $y > 0$, we have
\begin{eqnarray*} \int_{\mathbb{R}}\log(1+|F(x+iy)|)dx &\lesssim&  \int_{\mathbb{R}}\Phi(|F_{1}(x+iy)|)dx + \int_{\mathbb{R}}\Phi(|F_{2}(x+iy)|)dx\\ &\lesssim& \sup_{y> 0}\int_{\mathbb{R}}\Phi(|F(x+iy)|)dx <\infty. 
\end{eqnarray*}
\medskip

$(ii)$ Let $\alpha \in \mathbb{R}$ such that $1/a_\Phi <\alpha <1$. For $z \in \mathbb{C}_{+}$, put $$   F_{\alpha}(z) =\frac{1}{(z+i)^{\alpha}}.       $$
 By construction, $F_{\alpha}$ is an analytic function on $\mathbb{C}_{+}$ and 
$$  |F_{\alpha}(z)| = \frac{1}{|x+i(1+y)|^{\alpha}}  < 1, ~~\forall~z=x+iy \in \mathbb{C}_{+}.     $$
We deduce that 
 $$ \log\left( 1+ |F_{\alpha}(z)| \right)  \geq \frac{1}{2} \frac{1}{|x+i(1+y)|^{\alpha}}, ~~\forall~z=x+iy \in \mathbb{C}_{+}     $$
and
$$  \Phi\left( |F_{\alpha}(z)|  \right) \leq \Phi(1) \frac{1}{|x+i(1+y)|^{\alpha a_\Phi}},   ~~\forall~z=x+iy \in \mathbb{C}_{+},      $$
since $|F_{\alpha}|<1$ and the function $t\mapsto \frac{\Phi(t)}{t^{a_\Phi}}$ is non-decreasing on $\mathbb{R}_{+}^{*}$. It follows that  $F_{\alpha}\in H^{\Phi}(\mathbb{C}_{+})$ and  $F_{\alpha}\not\in \mathscr{N}(\mathbb{C}_{+})$. Indeed, for  $y > 0$, we have 
$$ \int_{\mathbb{R}}\Phi\left( |F_{\alpha}(x+iy)|  \right)dx \lesssim   B\left(\frac{1}{2}, \frac{\alpha a_\Phi-1}{2} \right)(1+y)^{1-\alpha a_\Phi} \leq B\left(\frac{1}{2}, \frac{\alpha a_\Phi-1}{2} \right)< +\infty  $$
and 
$$ \int_{\mathbb{R}} \log\left( 1+ |F_{\alpha}(x+iy)| \right) dx \geq \frac{1}{2}\int_{\mathbb{R}}\frac{dx}{|x+i(1+y)|^{\alpha}} =+\infty,    $$
according to Lemma \ref{pro:main26}.
 \end{proof}

Let $f$ be a measurable function on $\mathbb{R}$. The Poisson integral  $U_{f}$ of $f$ is the function defined  by
$$  U_{f}(x+iy):=  \frac{1}{\pi}\int_{\mathbb{R}}\frac{y}{(x-t)^{2}+y^{2}}f(t)dt,~~ \forall~ x+iy \in \mathbb{C_{+}},    $$
when it makes sense.\\
If $f\in L^{1}\left(\mathbb{R},\frac{dt}{1+t^{2}}\right)$ then $U_{f}$ is a harmonic function on $\mathbb{C_{+ }}$ and
  $$  \lim_{y \to 0}U_{f}(x+iy)=f(x),       $$
  for almost all $x\in \mathbb{R}$ (see \cite{javadmas}).

\begin{lemme}[Lemma 4.1, \cite{djesehaqbAZ}]\label{pro:mainfaq5}
Let $\Phi$ be a convex growth function such that $\Phi(t)>0$ for all  $t>0$ and  $0\not\equiv F$ an analytic function on $\mathbb{C_{+}}$. The following assertions are equivalent.
\begin{itemize}
 \item[(i)] $ F\in H^{\Phi}(\mathbb{C_{+}})$.
 \item[(ii)] There exists a unique function 
    $f\in  L^{\Phi}\left(\mathbb{R}\right)$ such that   $\log|f| \in L^{1}\left(\mathbb{R},\frac{dt}{1+t^{2}}\right)$ and 
$$ F(x+iy)= U_{f}(x+iy), ~~\forall~x+iy \in \mathbb{C}_{+}.$$
\end{itemize}
Moreover, 
$$  \|F\|_{H^{\Phi}}^{lux}=\lim_{y \to 0}\|F(.+iy)\|_{L^{\Phi}}^{lux}=\|f\|_{L^{\Phi}}^{lux}.
     $$
\end{lemme}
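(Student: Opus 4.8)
The plan is to prove the two implications separately and then read off the norm identity, isolating the delicate boundary-value construction in the direction (i)$\Rightarrow$(ii).

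The implication (ii)$\Rightarrow$(i) is the routine half. Assume $F=U_{f}$ with $f\in L^{\Phi}(\mathbb{R})$; since $\Phi$ is convex, $f\in L^{1}(\mathbb{R},\frac{dt}{1+t^{2}})$, so $U_{f}$ is harmonic, while $F$ is analytic by hypothesis. For fixed $y>0$ the Poisson kernel $t\mapsto\frac{1}{\pi}\frac{y}{(x-t)^{2}+y^{2}}$ is a probability density in $t$, so convexity of $\Phi$ and Jensen's inequality give
$$\Phi\!\left(\frac{|F(x+iy)|}{\lambda}\right)\le \frac{1}{\pi}\int_{\mathbb{R}}\frac{y}{(x-t)^{2}+y^{2}}\,\Phi\!\left(\frac{|f(t)|}{\lambda}\right)dt,$$
exactly as in the proof of Lemma \ref{pro:main6apaqmq4}. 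Integrating in $x$, using Fubini and the fact that the kernel integrates to $1$ in $x$, yields $\int_{\mathbb{R}}\Phi(|F(x+iy)|/\lambda)\,dx\le\int_{\mathbb{R}}\Phi(|f(t)|/\lambda)\,dt$. Choosing $\lambda=\|f\|_{L^{\Phi}}^{lux}$ shows $\|F(.+iy)\|_{L^{\Phi}}^{lux}\le\|f\|_{L^{\Phi}}^{lux}$ for every $y$, hence $F\in H^{\Phi}(\mathbb{C}_{+})$ and $\|F\|_{H^{\Phi}}^{lux}\le\|f\|_{L^{\Phi}}^{lux}$.

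For (i)$\Rightarrow$(ii) I would manufacture the boundary function by transporting $F$ to the disc. By Theorem \ref{pro:main5Qpm5} the function $G(\omega)=F(i\frac{1-\omega}{1+\omega})$ lies in $H^{\Phi}(\mathbb{D})$, and since $\Phi$ is convex the inclusion \eqref{eq:suqiaqsq8n} gives $G\in H^{1}(\mathbb{D})$. Classical Hardy space theory on the disc then provides a.e. radial boundary limits $G^{*}\in L^{1}(\mathbb{T})$ with $G$ equal to the Poisson integral of $G^{*}$, and, because $G\not\equiv 0$, $\log|G^{*}|\in L^{1}(\mathbb{T})$. Transporting back through the Cayley boundary map $x=\tan(\theta/2)$, under which $d\theta\approx\frac{dx}{1+x^{2}}$ and the disc harmonic measure pushes forward to the half-plane harmonic measure $\frac{1}{\pi}\frac{y}{(x-t)^{2}+y^{2}}\,dt$, I obtain a function $f$ on $\mathbb{R}$ with $\lim_{y\to0}F(x+iy)=f(x)$ for a.e. $x$, with $\log|f|\in L^{1}(\mathbb{R},\frac{dt}{1+t^{2}})$, and with $F=U_{f}$ on $\mathbb{C}_{+}$ by conformal invariance of the Poisson representation.

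It then remains to place $f$ in $L^{\Phi}(\mathbb{R})$ with the sharp constant. Writing $M=\|F\|_{H^{\Phi}}^{lux}$, Proposition \ref{pro:main6apmqaaqq4} gives $\|F(.+iy)\|_{L^{\Phi}}^{lux}\le M$, hence $\int_{\mathbb{R}}\Phi(|F(x+iy)|/M)\,dx\le1$ for every $y$; applying Fatou's lemma to the a.e. limit $F(x+iy)\to f(x)$ and using continuity of $\Phi$ yields $\int_{\mathbb{R}}\Phi(|f(x)|/M)\,dx\le1$, i.e. $\|f\|_{L^{\Phi}}^{lux}\le M$. Combined with the reverse inequality from (ii)$\Rightarrow$(i) this gives $\|f\|_{L^{\Phi}}^{lux}=\|F\|_{H^{\Phi}}^{lux}$, and the remaining equality with $\lim_{y\to0}\|F(.+iy)\|_{L^{\Phi}}^{lux}$ is exactly Proposition \ref{pro:main6apmqaaqq4}. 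Uniqueness of $f$ follows since $U_{g}=0$ with $g\in L^{1}(\mathbb{R},\frac{dt}{1+t^{2}})$ forces $g=0$ a.e. (the Poisson integral recovers its density as its a.e. boundary limit). The main obstacle is the construction in (i)$\Rightarrow$(ii): the disc transfer of Theorem \ref{pro:main5Qpm5} is only a norm \emph{inequality}, so it cannot by itself certify $f\in L^{\Phi}(\mathbb{R})$ with the correct constant; one must use the disc only qualitatively (to get a.e. boundary limits, the representation $F=U_{f}$, and $\log|f|\in L^{1}(\frac{dt}{1+t^{2}})$) and recover the sharp $L^{\Phi}$ membership separately on the half-plane through the Fatou argument above. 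Care is also needed with the conformal Jacobian $\frac{dx}{1+x^{2}}$ when transferring both the log-integrability and the Poisson kernel.
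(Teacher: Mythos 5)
Your argument is correct, and the paper itself offers no proof of this lemma (it is imported verbatim as Lemma 4.1 of \cite{djesehaqbAZ}); the route you take --- Cayley transfer to $H^{1}(\mathbb{D})$ for the boundary function and the representation $F=U_{f}$, Fatou's lemma on the half-plane for $\|f\|_{L^{\Phi}}^{lux}\leq\|F\|_{H^{\Phi}}^{lux}$, and Jensen with the Poisson kernel for the reverse inequality --- is exactly the strategy the paper deploys for its own generalization in Theorem \ref{pro:mainfaaqaqaqq5}. Your closing caveat is also well placed: the disc transfer of Theorem \ref{pro:main5Qpm5} is used only qualitatively, and the sharp Luxemburg constant is recovered on the half-plane, which is how the paper handles it as well.
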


\begin{theorem}\label{pro:mainfaaqaqaqq5}
Let $\Phi$ be a growth function such that  $\Phi(t)>0$ for all  $t>0$. If $\Phi$ is convex or belongs to $\mathscr{L}$, then for  $ 0\not\equiv F\in H^{\Phi}(\mathbb{C_{+}})$,  there exists a unique function 
     $f\in  L^{\Phi}\left(\mathbb{R}\right)$ such that   $\log|f| \in L^{1}\left(\frac{dt}{1+t^{2}}\right)$,  
  $$  f(x)=\lim_{y\to 0}F(x+iy),   $$
 for almost all $x\in \mathbb{R}$,  $f(t)\not=0$ for almost all $t\in \mathbb{R}$, 
$$  \log|F(x+iy)| \leq   \frac{1}{\pi}\int_{\mathbb{R}}\frac{y}{(x-t)^{2}+y^{2}}\log|f(t)|dt, ~~\forall~x+iy \in \mathbb{C}_{+}
    $$
and 
\begin{equation}\label{eq:sqaaqq8n}
\|F\|_{H^{\Phi}}^{lux}=\lim_{y \to 0}\|F(.+iy)\|_{L^{\Phi}}^{lux}=\|f\|_{L^{\Phi}}^{lux}.
\end{equation}
\end{theorem}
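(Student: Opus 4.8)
The plan is to transfer the whole statement to the unit disc, where the classical Nevanlinna factorization is available for every exponent $\rho>0$, and then pull everything back through the Cayley transform $\tau(\omega)=i\frac{1-\omega}{1+\omega}$. First I would set $G(\omega)=F(\tau(\omega))$. By Theorem \ref{pro:main5Qpm5}, $G\in H^{\Phi}(\mathbb{D})$, and by the inclusion (\ref{eq:suqiaqsq8n}) there is $\rho\in\{1;a_{\Phi}\}$ with $G\in H^{\rho}(\mathbb{D})$. For a nonzero function in the classical Hardy space $H^{\rho}(\mathbb{D})$ (any $\rho>0$), the canonical factorization $G=BSO$ provides nontangential boundary values $G^{*}$ a.e.\ on $\partial\mathbb{D}$, with $\log|G^{*}|\in L^{1}(\partial\mathbb{D})$ (so $G^{*}\neq 0$ a.e.), and, since $\log|B|\le 0$ and $\log|S|\le 0$ while $\log|O|=P_{\mathbb{D}}[\log|G^{*}|]$, the harmonic majorization $\log|G(\omega)|\le P_{\mathbb{D}}[\log|G^{*}|](\omega)$, where $P_{\mathbb{D}}$ is the Poisson kernel of the disc.

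Next I would pull these conclusions back. Since $\tau$ is a conformal homeomorphism of $\overline{\mathbb{D}}\setminus\{-1\}$ onto $\overline{\mathbb{C}_{+}}$ sending nontangential approach regions to nontangential approach regions, $F$ has nontangential boundary values $f(x)=\lim_{y\to 0}F(x+iy)=G^{*}(\tau^{-1}(x))$ for a.e.\ $x\in\mathbb{R}$, and $f\neq 0$ a.e.\ because $G^{*}\neq 0$ a.e.; uniqueness of $f$ is automatic, as nontangential limits are determined a.e.\ by $F$. The push-forward of normalized arclength on $\partial\mathbb{D}$ under $\tau$ is the harmonic measure at $i$, namely $\frac{1}{\pi}\frac{dt}{1+t^{2}}$, so $\log|G^{*}|\in L^{1}(\partial\mathbb{D})$ becomes $\log|f|\in L^{1}\!\left(\frac{dt}{1+t^{2}}\right)$. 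By the conformal invariance of the Poisson integral, $P_{\mathbb{D}}[\log|G^{*}|]\circ\tau^{-1}$ equals the half-plane Poisson integral of $\log|f|$, so the majorization becomes $\log|F(x+iy)|\le\frac{1}{\pi}\int_{\mathbb{R}}\frac{y}{(x-t)^{2}+y^{2}}\log|f(t)|\,dt$. I expect this conformal transfer, matching boundary values, harmonic measure, and the two Poisson kernels across $\tau$, to be the main technical point.

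Finally I would obtain $f\in L^{\Phi}(\mathbb{R})$ together with the norm identities. By Proposition \ref{pro:main6apmqaaqq4}, the map $y\mapsto\|F(\cdot+iy)\|_{L^{\Phi}}^{lux}$ is nonincreasing and $L:=\lim_{y\to 0}\|F(\cdot+iy)\|_{L^{\Phi}}^{lux}=\|F\|_{H^{\Phi}}^{lux}<\infty$. Because $F(x+iy)\to f(x)$ a.e., Fatou's lemma applied to $\int_{\mathbb{R}}\Phi(|F(\cdot+iy)|/\lambda)\,dx\le 1$ for $\lambda>L$ gives $\|f\|_{L^{\Phi}}^{lux}\le L$, and in particular $f\in L^{\Phi}(\mathbb{R})$. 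For the reverse inequality I would use the majorization from the previous step: writing $\lambda_{0}=\|f\|_{L^{\Phi}}^{lux}$ and using $P_{y}[\log(|f|/\lambda_{0})]=P_{y}[\log|f|]-\log\lambda_{0}$, one gets $|F(x+iy)|/\lambda_{0}\le\exp\big(P_{y}[\log(|f|/\lambda_{0})](x)\big)$. Since $\Phi_{\rho}(t)=\Phi(t^{1/\rho})$ is convex with $\rho\in\{1;a_{\Phi}\}$, the function $u\mapsto\Phi(e^{u})=\Phi_{\rho}(e^{\rho u})$ is convex, so Jensen's inequality against the probability measure $\frac{1}{\pi}\frac{y}{(x-t)^{2}+y^{2}}\,dt$ yields $\Phi(|F(x+iy)|/\lambda_{0})\le P_{y}[\Phi(|f|/\lambda_{0})](x)$; integrating in $x$ and using that the Poisson integral preserves total mass gives $\int_{\mathbb{R}}\Phi(|F(x+iy)|/\lambda_{0})\,dx\le\int_{\mathbb{R}}\Phi(|f|/\lambda_{0})\,dt\le 1$. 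Hence $\|F(\cdot+iy)\|_{L^{\Phi}}^{lux}\le\lambda_{0}$ for every $y>0$, so $L\le\lambda_{0}$, and combining the two bounds yields (\ref{eq:sqaaqq8n}).
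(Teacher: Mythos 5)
Your proposal is correct and follows essentially the same route as the paper: both pass to the disc via the Cayley transform to obtain the boundary function and the harmonic majorization $\log|F(x+iy)|\le \frac{1}{\pi}\int_{\mathbb{R}}\frac{y}{(x-t)^{2}+y^{2}}\log|f(t)|\,dt$, and both derive the norm identity (\ref{eq:sqaaqq8n}) from Fatou's lemma in one direction and Jensen's inequality through the convex $\Phi_{\rho}$ in the other. The only cosmetic difference is that you read off the existence and non-vanishing of the boundary values from the canonical factorization in $H^{\rho}(\mathbb{D})$, whereas the paper first invokes Proposition \ref{pro:main0aplaq0} and Lemma \ref{pro:mainfaq5} for this and then performs the same disc transfer for the majorization.
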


\begin{proof}
Let $ 0\not\equiv F\in H^{\Phi}(\mathbb{C_{+}})$.  There exists a unique measurable function $f$ on $\mathbb{R}$ such that $\log|f| \in L^{1}\left(\frac{dt}{1+t^{2}}\right)$ and
  $$\lim_{y \to 0}F(x+iy)=f(x),$$
 for almost all $x\in \mathbb{R}$, according to point $(i)$ of Proposition \ref{pro:main0aplaq0} and Lemma \ref{pro:mainfaq5}. 
 Suppose that there exists $A$ a measurable subset of $\mathbb{R}$ with Lebesgue measure $|A| >0$, and $$ f(x)=0, ~~\forall~x\in A. $$
 We have
 $$ +\infty=\int_{A} |\log|f(t)|| \frac{dt}{1+t^{2}}\leq \int_{\mathbb{R}} |\log|f(t)|| \frac{dt}{1+t^{2}}.    $$
 We deduce that  $\log|f| \not\in L^{1}\left(\frac{dt}{1+t^{2}}\right)$. Which is absurd. Hence, $f(t)\not=0$, for almost all $t\in \mathbb{R}$.
 For  $\omega \in \mathbb{D}$,  put $$ G(\omega)= F\left(i\frac{1-\omega}{1+\omega} \right).       $$
 Since $G \in H^{\Phi}(\mathbb{D}) \subset H^{p}(\mathbb{D})$, with  $p>0$, there exists a unique function $g\in L^{\Phi}(\mathbb{T})$ such that $\log|g| \in L^{1}(\mathbb{T})$ and
  $$   \lim_{r\to 1}G(re^{i\theta}) =g(e^{i\theta}),      $$
  for almost all $\theta\in \mathbb{R}$ and
  $$ \log|G(re^{i\theta})|\leq  \frac{1}{2\pi}\int_{-\pi}^{\pi}\frac{1-r^{2}}{1-2r\cos(u-\theta)+r^{2}}\log|g(e^{iu})|du, ~~\forall~re^{i\theta} \in \mathbb{D}.     $$
Moreover,
 \begin{equation}\label{eq:suiaaqaq8n}
 \log|g(e^{i\theta})|=   \lim_{r\to 1} \left(   \frac{1}{2\pi}\int_{-\pi}^{\pi}\frac{1-r^{2}}{1-2r\cos(u-\theta)+r^{2}}\log|g(e^{iu})|du  \right),
 \end{equation}
 for almost all $\theta\in \mathbb{R}$.\\ 
Consider  $\varphi$, the map defined by
   $$   \varphi(\omega)= i\frac{1-\omega}{1+\omega},~~\forall~\omega \in \mathbb{D} \cup \mathbb{T}\backslash\{-1\}, 
       $$
 where $\mathbb{T}$ is the complex unit circle. Note that the restriction of $\varphi$ to $\mathbb{D}$ (resp. $\mathbb{T}\backslash\{-1\}$) is an analytic function on $\mathbb{D}$ with values in $\mathbb {C}_{+}$ (resp. a homeomorphism from $\mathbb{T}\backslash\{-1\}$ onto $\mathbb{R}$).\\
For  $z=x+iy\in \mathbb{C}_{+}$ and  $\omega=re^{iu} \in \mathbb{D}$ such that  $ z=i\frac{1-\omega}{1+\omega}$, using 
  $$ y=\frac{1-r^{2}}{1+r^{2}+2r\cos u}         $$
 and the Relation (\ref{eq:suiaaqaq8n}),  we deduce  that
 $$  |f(x)|=|g\circ\varphi^{-1}(x)|,   $$
 for almost all $x\in \mathbb{R}$. Therefore,
 \begin{equation}\label{eq:suiaqaaqaq8n}
 \log|F(x+iy)| \leq  \frac{1}{\pi}\int_{\mathbb{R}}\frac{y}{(x-t)^{2}+y^{2}}\log|f(t)|dt, ~~\forall~x+iy\in \mathbb{C}_{+}.
 \end{equation}
 Indeed
 \begin{align*}
  \log|F(x+iy)|&= \log|G(re^{iu})|     \\
  &\leq  \frac{1}{2\pi}\int_{-\pi}^{\pi}\frac{1-r^{2}}{1-2r\cos(u-\theta)+r^{2}}\log|g(e^{i\theta})|d\theta \\
  &=\frac{1}{\pi}\int_{\mathbb{R}}\frac{y}{(x-t)^{2}+y^{2}}\log|g\circ\varphi^{-1}(t)|dt
  = \frac{1}{\pi}\int_{\mathbb{R}}\frac{y}{(x-t)^{2}+y^{2}}\log|f(t)|dt.
  \end{align*}
Let us prove Relation (\ref{eq:sqaaqq8n}). 
By Fatou's lemma,  we have $$ \int_{\mathbb{R}}\Phi\left( \frac{|f(x)|}{\|F\|_{H^{\Phi}}^{lux}}  \right)dx \leq \liminf_{y \to 0}\int_{\mathbb{R}}\Phi\left( \frac{|F(x+iy)|}{\|F\|_{H^{\Phi}}^{lux}}  \right)dx \leq   \sup_{y>0}\int_{\mathbb{R}}\Phi\left( \frac{|F(x+iy)|}{\|F\|_{H^{\Phi}}^{lux}}\right)dx \leq 1. $$
We deduce that $ f\in L^{\Phi}(\mathbb{R})$ and 
\begin{equation}\label{eq:suiaaq8n}
\|f\|_{L^{\Phi}}^{lux} \leq \|F\|_{H^{\Phi}}^{lux}.
\end{equation}
Put 
 $$  \Phi_{\rho}(t)=\Phi\left(t^{1/\rho}\right), ~~\forall~t\geq 0,      $$
 where $\rho=1$ if $\Phi$ is convex and $\rho=a_{\Phi}$ if  $\Phi \in \mathscr{L}$.\\
From Jensen's inequality and also from the Relation (\ref{eq:suiaqaaqaq8n}), we deduce that
$$  |F(x+iy)|^{\rho} \leq   \frac{1}{\pi}\int_{\mathbb{R}}\frac{y}{(x-t)^{2}+y^{2}}|f(t)|^{\rho}dt, ~~\forall~x+iy \in \mathbb{C}_{+}.        $$
Fix $y>0$. We have 
\begin{align*}
\int_{\mathbb{R}}\Phi\left(\frac{|F(x+iy)|}{\|f\|_{L^{\Phi}}^{lux}}\right)dx &\leq \int_{\mathbb{R}}\Phi_{\rho}\left(\frac{1}{\pi}\int_{\mathbb{R}}\frac{y}{(x-t)^{2}+y^{2}}\left(\frac{|f(t)|}{\|f\|_{L^{\Phi}}^{lux}}\right)^{\rho}dt\right)dx\\
&\leq \int_{\mathbb{R}}\frac{1}{\pi}\int_{\mathbb{R}}\frac{y}{(x-t)^{2}+y^{2}}\Phi_{\rho}\left(\left(\frac{|f(t)|}{\|f\|_{L^{\Phi}}^{lux}}\right)^{\rho}\right)dtdx
\\
&= \int_{\mathbb{R}}\Phi\left(\frac{|f(t)|}{\|f\|_{L^{\Phi}}^{lux}}\right)\left(\frac{1}{\pi}\int_{\mathbb{R}}\frac{y}{(x-t)^{2}+y^{2}}dx\right)dt\\
&=
\int_{\mathbb{R}}\Phi\left(\frac{|f(t)|}{\|f\|_{L^{\Phi}}^{lux}}\right)dt \leq 1.
\end{align*}
We deduce that
\begin{equation}\label{eq:eaqaaqaqwl}
   \|F\|_{H^{\Phi}}^{lux} \leq \|f\|_{L^{\Phi}}^{lux}. \end{equation} 
From Relations (\ref{eq:suiaaq8n}) and (\ref{eq:eaqaaqaqwl}) and also from Proposition \ref{pro:main6apmqaaqq4}, it follows that  
 $$ \|F\|_{H^{\Phi}}^{lux}=\lim_{y \to 0}\|F(.+iy)\|_{L^{\Phi}}^{lux}=\|f\|_{L^{\Phi}}^{lux}.
    $$
\end{proof}
 
\begin{lemme}\label{pro:main118}
Let $\alpha>-1$ and  $\Phi$ a one-to-one growth function. If $\Phi$ is convex or belongs to $\mathscr{L}$, then there exists a constant $C:=C_{\alpha, \Phi}>1$ such that for 
$ F \in A^{\Phi}_{\alpha}(\mathbb{C_{+}})$, 
 \begin{equation}\label{eq:inegalitedehay}
 |F(x+iy)|\leq C\Phi^{-1}\left(\frac{1}{ y^{2+\alpha}}\right)\|F\|_{A^{\Phi}_{\alpha}}^{lux},~~\forall~x+iy \in \mathbb{C_{+}}.
 \end{equation}
 \end{lemme}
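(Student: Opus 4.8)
The plan is to follow the strategy of Proposition \ref{pro:main6mq4}, replacing the horizontal slicing used in the Hardy case by a genuinely two-dimensional mean-value estimate adapted to the weight $dV_{\alpha}$. As there, I would set $\Phi_{\rho}(t)=\Phi\left(t^{1/\rho}\right)$ with $\rho=1$ when $\Phi$ is convex and $\rho=a_{\Phi}$ when $\Phi\in\mathscr{L}$; in both cases $\Phi_{\rho}$ is a convex growth function, the second case because $a_{\Phi_{\rho}}\geq 1$ and $b_{\Phi_{\rho}}<\infty$ by Lemma \ref{pro:main3aq7}, so that $\Phi_{\rho}\in\mathscr{U}$. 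Fix $z_{0}=x_{0}+iy_{0}\in\mathbb{C}_{+}$ and take the disk $\mathcal{D}(z_{0},r)$ with $r=y_{0}/2$, which is contained in $\mathbb{C}_{+}$. Since $|F|^{\rho}$ is subharmonic, the sub-mean value inequality gives
$$ |F(z_{0})|^{\rho}\leq\frac{1}{\pi r^{2}}\int\int_{\mathcal{D}(z_{0},r)}|F(u+iv)|^{\rho}\,du\,dv. $$

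Next I would normalize by $\|F\|_{A_{\alpha}^{\Phi}}^{lux}$ and apply Jensen's inequality with the convex function $\Phi_{\rho}$ against the probability measure $\frac{1}{\pi r^{2}}\,du\,dv$ on $\mathcal{D}(z_{0},r)$, exactly as in Proposition \ref{pro:main6mq4}, to obtain
$$ \Phi\left(\frac{|F(z_{0})|}{\|F\|_{A_{\alpha}^{\Phi}}^{lux}}\right)\leq\frac{1}{\pi r^{2}}\int\int_{\mathcal{D}(z_{0},r)}\Phi\left(\frac{|F(u+iv)|}{\|F\|_{A_{\alpha}^{\Phi}}^{lux}}\right)du\,dv. $$
The decisive new point is the passage from $du\,dv$ to $dV_{\alpha}=v^{\alpha}\,du\,dv$. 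On $\mathcal{D}(z_{0},r)$ one has $v\in[y_{0}/2,\,3y_{0}/2]$, hence $v^{\alpha}\approx y_{0}^{\alpha}$ with constants depending only on $\alpha$; therefore $du\,dv\leq C_{\alpha}\,y_{0}^{-\alpha}\,dV_{\alpha}$ on the disk. Bounding the disk integral by the full integral over $\mathbb{C}_{+}$, which is at most $1$ by definition of the Luxemburg norm (via monotone convergence as $\lambda\downarrow\|F\|_{A_{\alpha}^{\Phi}}^{lux}$), and using $r=y_{0}/2$, this yields
$$ \Phi\left(\frac{|F(z_{0})|}{\|F\|_{A_{\alpha}^{\Phi}}^{lux}}\right)\leq\frac{C}{y_{0}^{2+\alpha}} $$
for a constant $C=C_{\alpha}\geq 1$.

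It remains to invert $\Phi$ and move the constant outside, which is the step where the two hypotheses enter and is the main obstacle. Since $\Phi$ is one-to-one, $\Phi^{-1}$ is a growth function and the last display gives $|F(z_{0})|\leq\Phi^{-1}\!\left(C\,y_{0}^{-(2+\alpha)}\right)\|F\|_{A_{\alpha}^{\Phi}}^{lux}$. When $\Phi$ is convex, $\Phi^{-1}$ is concave with $\Phi^{-1}(0)=0$, so $\Phi^{-1}(Cs)\leq C\,\Phi^{-1}(s)$ for $C\geq 1$; when $\Phi\in\mathscr{L}$ it has finite indices, whence $b_{\Phi^{-1}}=1/a_{\Phi}<\infty$ by Proposition \ref{pro:main4pm4}, and the monotonicity of $t\mapsto\Phi^{-1}(t)/t^{b_{\Phi^{-1}}}$ gives $\Phi^{-1}(Cs)\leq C^{1/a_{\Phi}}\Phi^{-1}(s)$ for $C\geq 1$. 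In either case $\Phi^{-1}\!\left(C\,y_{0}^{-(2+\alpha)}\right)\leq C_{\alpha,\Phi}\,\Phi^{-1}\!\left(y_{0}^{-(2+\alpha)}\right)$ with $C_{\alpha,\Phi}>1$, which is the claimed estimate. Unlike the Hardy case, where the geometric constant could be kept inside $\Phi^{-1}$, here the weight comparison forces a multiplicative constant that must be extracted using the concavity of $\Phi^{-1}$ (convex case) or the finiteness of its upper index (the $\mathscr{L}$ case).
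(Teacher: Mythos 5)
Your argument is correct and follows essentially the same route as the paper: subharmonicity of $|F|^{\rho}$ on the disk $\mathcal{D}(z_{0},y_{0}/2)$, Jensen's inequality for the convex function $\Phi_{\rho}$, and the comparison $v^{\alpha}\approx y_{0}^{\alpha}$ on that disk. The only (harmless) difference is in handling the geometric constant: the paper inserts $\left(\pi/(4C_{\alpha})\right)^{1/\rho}\leq 1$ inside the argument of $\Phi$ before applying Jensen, so that the constant exits $\Phi^{-1}$ automatically, whereas you keep it outside and then extract it from $\Phi^{-1}$ at the end via concavity (convex case) or the finiteness of $b_{\Phi^{-1}}=1/a_{\Phi}$ (the $\mathscr{L}$ case), both of which are valid within the paper's framework.
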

 
 \begin{proof}
For $t\geq 0$, put
 $$  \Phi_{\rho}(t)=\Phi\left(t^{1/\rho}\right),      $$
 where $\rho=1$ if $\Phi$ is convex and $\rho=a_{\Phi}$ if  $\Phi \in \mathscr{L}$.\\
Let $0\not\equiv F \in A^{\Phi}_{\alpha}(\mathbb{C_{+}})$. Fix $z_{0}=x_{0}+iy_{0}\in \mathbb{C_{+}}$ and put $r=\dfrac{y_{0}}{2}$. Since $|F|^{\rho}$ is subharmonic on $\mathbb{C}_ {+}$, we have
 $$ |F(z_{0})|^{\rho} \leq \frac{1}{\pi r^{2}}\int \int_{\overline{\mathcal{D}(z_{0}, r)}}|F(u+iv)|^{\rho}dudv.   $$
For $u+iv \in \overline{\mathcal{D}(z_{0}, r)}$, we have
 $$  r \leq v \leq 3r  \Rightarrow 0<\frac{1}{v^{\alpha}}\leq 2^{\alpha}\times\frac{1}{y_{0}^{\alpha}},~~ \text{if}~~ \alpha  \geq 0 \hspace*{0.5cm}\textrm{and} \hspace*{0.5cm} 0<\frac{1}{v^{\alpha}}\leq  \left(\frac{2}{3}\right)^{\alpha}\times\frac{1}{y_{0}^{\alpha}},~~ \text{if} ~~ -1<\alpha < 0.    $$
We deduce that 
\begin{equation}\label{eq:espqsa2ces1}
 0<\dfrac{1}{v^{\alpha}}  \leq C_{\alpha}\dfrac{1}{y_{0}^{\alpha}},~~\forall~u+iv \in \overline{\mathcal{D}(z_{0}, r)},\end{equation}
 where $  C_{\alpha}:= \max\left\{ 2^{\alpha} ;  \left(2/3\right)^{\alpha}\right\}$.
By Jensen's inequality, we have 
\begin{align*}
 \Phi\left(\left(\frac{\pi}{4C_{\alpha}}\right)^{1/\rho} \times\frac{|F(z_{0})|}{\|F\|_{A_{\alpha}^{\Phi}}^{lux}}\right)   &\leq \frac{\pi}{4C_{\alpha}}\Phi_{\rho}\left( \frac{1}{\pi r^{2}}\int \int_{\overline{\mathcal{D}(z_{0}, r)}}\left(\frac{|F(u+iv)|}{\|F\|_{A_{\alpha}^{\Phi}}^{lux}}\right)^{\rho}dudv \right) \\
 &\leq \frac{\pi}{4C_{\alpha}}\times\frac{4}{\pi y_{0}^{2}}\times \dfrac{C_{\alpha}}{y_{0}^{\alpha}} \int \int_{\overline{\mathcal{D}(z_{0}, r)}}\Phi\left(\frac{|F(u+iv)|}{\|F\|_{A_{\alpha}^{\Phi}}^{lux}}\right)v^{\alpha}dudv \\
  &\leq  \frac{1}{y_{0}^{2+\alpha}} \int_{\mathbb{C_{+}}}\Phi\left(\frac{|F(u+iv)|}{\|F\|_{A_{\alpha}^{\Phi}}^{lux}}\right)dV_{\alpha}(u+iv) \leq \frac{1}{y_{0}^{2+\alpha}}.
  \end{align*}
We deduce that $$ |F(z_{0})|\leq \left( \frac{4C_{\alpha}}{\pi} \right)^{1/\rho}\Phi^{-1}\left(\frac{1}{ y_{0}^{2+\alpha}}\right)\|F\|_{A^{\Phi}_{\alpha}}^{lux}.           $$
 \end{proof}

\begin{proposition}\label{pro:main1aq18}
Let $\alpha>-1$. There exist $C:=C_{\alpha}>0$ and $\beta \in \left\{0, 1/3\right\}$  such that for any analytic function $F$ on $\mathbb{C_{ +}}$ and for all $0<\gamma<\infty$,
\begin{equation}\label{eq:fonciaqnpmal}
|F(z)|^{\gamma} \leq C\mathcal{M}_{V_{\alpha}}^{\mathcal{D}^{\beta}}\left(|F|^{\gamma}\right)(z), ~~\forall~z \in \mathbb{C_{+}}.
\end{equation}
\end{proposition}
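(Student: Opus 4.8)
The plan is to exploit the subharmonicity of $|F|^{\gamma}$ together with the sub-mean value inequality, and then to replace the resulting average over a Euclidean disk by a $V_{\alpha}$-average over a dyadic Carleson square that contains that disk. First I would fix $z_{0}=x_{0}+iy_{0}\in\mathbb{C}_{+}$ and set $r=y_{0}/2$, so that $\overline{\mathcal{D}(z_{0},r)}\subset\mathbb{C}_{+}$ (its minimal height is $y_{0}/2>0$). Since $F$ is analytic, $|F|^{\gamma}$ is subharmonic for every $\gamma>0$, and the sub-mean value inequality (used exactly as in the proof of Lemma \ref{pro:main118}) gives
\[
|F(z_{0})|^{\gamma}\leq \frac{1}{\pi r^{2}}\iint_{\overline{\mathcal{D}(z_{0},r)}}|F(u+iv)|^{\gamma}\,du\,dv .
\]

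Next I would pass from Lebesgue measure to $dV_{\alpha}$. On $\overline{\mathcal{D}(z_{0},r)}$ one has $v\in[y_{0}/2,3y_{0}/2]$, so the estimate \eqref{eq:espqsa2ces1} already established in the proof of Lemma \ref{pro:main118} yields $v^{-\alpha}\leq C_{\alpha}\,y_{0}^{-\alpha}$ with $C_{\alpha}=\max\{2^{\alpha},(2/3)^{\alpha}\}$. Writing $du\,dv=v^{-\alpha}\,dV_{\alpha}$ and using $r=y_{0}/2$ turns the previous inequality into
\[
|F(z_{0})|^{\gamma}\leq \frac{4C_{\alpha}}{\pi\,y_{0}^{2+\alpha}}\iint_{\overline{\mathcal{D}(z_{0},r)}}|F|^{\gamma}\,dV_{\alpha}.
\]

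The heart of the argument is to dominate this weighted average by a dyadic one. I would take the interval $I=[x_{0}-y_{0},x_{0}+y_{0}]$ of length $2y_{0}$ and apply Lemma \ref{pro:main80} to obtain $\beta\in\{0,1/3\}$ and $J\in\mathcal{D}^{\beta}$ with $I\subset J$ and $|J|\leq 6|I|=12y_{0}$. Since $|J|\geq|I|=2y_{0}>3y_{0}/2$ and $J$ contains the horizontal projection $(x_{0}-y_{0}/2,x_{0}+y_{0}/2)$ of the disk, the Carleson square $Q_{J}$ contains $\overline{\mathcal{D}(z_{0},r)}$, and it contains $z_{0}$ as well. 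By \eqref{eq:fo4n1l}, $|Q_{J}|_{\alpha}=\frac{1}{1+\alpha}|J|^{2+\alpha}\leq \frac{12^{2+\alpha}}{1+\alpha}\,y_{0}^{2+\alpha}$, so
\[
\iint_{\overline{\mathcal{D}(z_{0},r)}}|F|^{\gamma}\,dV_{\alpha}\leq \int_{Q_{J}}|F|^{\gamma}\,dV_{\alpha}=|Q_{J}|_{\alpha}\,\frac{1}{|Q_{J}|_{\alpha}}\int_{Q_{J}}|F|^{\gamma}\,dV_{\alpha}\leq |Q_{J}|_{\alpha}\,\mathcal{M}_{V_{\alpha}}^{\mathcal{D}^{\beta}}(|F|^{\gamma})(z_{0}).
\]
Combining the three displays yields $|F(z_{0})|^{\gamma}\leq \frac{4C_{\alpha}\,12^{2+\alpha}}{\pi(1+\alpha)}\,\mathcal{M}_{V_{\alpha}}^{\mathcal{D}^{\beta}}(|F|^{\gamma})(z_{0})$, i.e. the claim with a constant $C=C_{\alpha}$ depending only on $\alpha$.

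The main obstacle is the final containment step: the disk must sit inside a single dyadic Carleson square of comparable size, and for points $z_{0}$ whose real part lies at (or near) a dyadic endpoint, no square of the standard grid $\mathcal{D}^{0}$ containing $z_{0}$ can also cover the left half of the disk. This is precisely why Lemma \ref{pro:main80} (the ``one-third trick'') is invoked, and it forces the grid $\beta$ to be chosen according to $z_{0}$; the inequality \eqref{eq:ooua5x1l} is then what allows one to absorb this dependence when comparing $\mathcal{M}_{V_{\alpha}}^{\mathcal{D}^{\beta}}$ with the full maximal function $\mathcal{M}_{V_{\alpha}}$ in the subsequent applications.
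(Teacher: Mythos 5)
Your argument is correct and follows essentially the same route as the paper: sub-mean value inequality for the subharmonic function $|F|^{\gamma}$ on the disk $\overline{\mathcal{D}(z_{0},y_{0}/2)}$, conversion to $dV_{\alpha}$ via the bound $v^{-\alpha}\leq \max\{2^{\alpha},(2/3)^{\alpha}\}y_{0}^{-\alpha}$, and absorption of the disk into a dyadic Carleson square $Q_{J}$ of comparable size obtained from Lemma \ref{pro:main80}, yielding the same constant $C_{\alpha}=\frac{4}{\pi}\cdot\frac{12^{2+\alpha}}{1+\alpha}\cdot\max\{2^{\alpha},(2/3)^{\alpha}\}$. Your closing remark that $\beta$ in fact depends on $z_{0}$ (so that the uniform statement is really used through \eqref{eq:ooua5x1l} or a sum over the two grids) is a fair observation that applies equally to the paper's own proof.
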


\begin{proof}
Let   $0<\gamma<\infty$ and  $0\not\equiv F$ an analytic function on $\mathbb{C_{+}}$. Fix $z_{0}=x_{0}+iy_{0}\in \mathbb{C_{+}}$ and $r=\dfrac{y_{0}}{2}$. From Relation (\ref{eq:espqsa2ces1}) we have
 $$ 0<\dfrac{1}{v^{\alpha}}  \leq \max\left\{ 2^{\alpha} ;  \left(2/3\right)^{\alpha}\right\}\dfrac{1}{y_{0}^{\alpha}},~~\forall~u+iv \in \overline{\mathcal{D}(z_{0}, r)}.       $$
Let $I$ be an interval centered at $x_{0}$ and of length
$|I|=2y_{0}$. Consider $Q_{I}$ the Carleson square associated with $I$. According to Lemma \ref{pro:main80}, there exist $\beta \in \left\{0, 1/3\right\}$ and $J \in \mathcal{D}^{\beta}$ such that $I\subset J$ and $|J|\leq 6|I|$. From  Relation (\ref{eq:fo4n1l}) we have
$$ |Q_{J}|_{\alpha}=\frac{1}{1+\alpha}|J|^{2+\alpha} \leq \frac{6^{2+\alpha}}{1+\alpha}|I|^{2+\alpha}
= \frac{12^{2+\alpha}}{1+\alpha}y_{0}^{2+\alpha}.       $$
Since  $|F|^{\gamma}$ is subharmonic on $\mathbb{C}_ {+}$ and  $\overline{\mathcal{D}(z_{0}, r)}$ is contained in $Q_{I}$  we have 
\begin{align*}
|F(z_{0})|^{\gamma} &\leq \frac{1}{\pi r^{2}}\int \int_{\overline{\mathcal{D}(z_{0}, r)}}|F(u+iv)|^{\gamma}dudv \\
&\leq \frac{4}{\pi y_{0}^{2}}\times \dfrac{\max\left\{ 2^{\alpha} ;  \left(2/3\right)^{\alpha}\right\}}{y_{0}^{\alpha}} \int \int_{\overline{\mathcal{D}(z_{0}, r)}}|F(u+iv)|^{\gamma}v^{\alpha}dudv \\
&\leq  C_{\alpha}\frac{\chi_{Q_{J}}(z_{0})}{|Q_{J}|_{\alpha}}\int \int_{Q_{J}}|F(u+iv)|^{\gamma}v^{\alpha}dudv \leq  C_{\alpha} \mathcal{M}_{V_{\alpha}}^{\mathcal{D}^{\beta}}\left(|F|^{\gamma}\right)(z_{0}), 
 \end{align*}
where $C_{\alpha}:=\frac{4}{\pi}\times\frac{12^{2+\alpha}}{1+\alpha}\times \max\left\{ 2^{\alpha} ;  \left(2/3\right)^{\alpha}\right\}$.
\end{proof}

\begin{proposition}\label{pro:main10aqa9}
Let  $\alpha>-1$ and $\Phi$ a one-to-one growth function. If $\Phi$ is convex or belongs to $\mathscr{L}$  then there exists some constants  $\rho \in \{1; a_{\Phi}\} $ and  
\begin{equation}\label{eq:fo4naqm1l}
C_{\alpha}:= B\left(1+\alpha, 2+\alpha\right)B\left(\frac{1}{2}, \frac{3+2\alpha}{2}\right), \end{equation}
such that for all  $z=x+iy\in \mathbb{C}_{+}$ the functions $F_{z}$ and $G_{z}$  defined respectively by 
\begin{equation}\label{eq:fo4naaqqm1l}
F_{z}(\omega)=\Phi^{-1}\left(\frac{1}{\pi y}\right) \frac{y^{2/\rho}}{ (\omega-\overline{z})^{2/\rho}} ,~~ \forall~ \omega\in \mathbb{C_{+}} \end{equation}
and
\begin{equation}\label{eq:fo4qm1l}
G_{z}(\omega)=\Phi^{-1}\left(\frac{1}{C_{\alpha}y^{2+\alpha}}\right) \frac{y^{(4+2\alpha)/\rho}}{ (\omega-\overline{z})^{(4+2\alpha)/\rho}},~ \forall~ \omega\in \mathbb{C_{+}},
 \end{equation}
 are  analytic functions belong respectively to  $H^{\Phi}(\mathbb{C_{+}})$ and
  $A^{\Phi}_{\alpha}(\mathbb{C_{+}})$. Moreover,   $ \|F_{z}\|_{H^{\Phi}}^{lux}\leq 1$ and
 $ \|G_{z}\|_{A^{\Phi}_{\alpha}}^{lux}\leq 1$.
 \end{proposition}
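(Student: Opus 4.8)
The plan is to verify directly, taking $\lambda = 1$, the two integral conditions defining the respective Luxemburg norms, exploiting the same convexity device used in Proposition \ref{pro:main6mq4}. First I record that both functions are holomorphic on $\mathbb{C}_{+}$: for $\omega \in \mathbb{C}_{+}$ one has $\mathrm{Im}(\omega - \overline{z}) = \mathrm{Im}(\omega) + y > 0$, so $\omega - \overline{z}$ stays off the branch cut of the principal power and both $(\omega-\overline{z})^{-2/\rho}$ and $(\omega-\overline{z})^{-(4+2\alpha)/\rho}$ are well defined and analytic. As before I put $\Phi_{\rho}(t) = \Phi(t^{1/\rho})$, a convex growth function, with $\rho = 1$ when $\Phi$ is convex and $\rho = a_{\Phi}$ when $\Phi \in \mathscr{L}$ (here Lemma \ref{pro:main3aq7} gives $a_{\Phi_{\rho}} \geq 1$, hence convexity). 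The one inequality I use throughout is that a convex function vanishing at $0$ satisfies $\Phi_{\rho}(sT) \leq s\,\Phi_{\rho}(T)$ for $s \in [0,1]$.

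For $F_{z}$, set $a = \Phi^{-1}(1/(\pi y))$ so that $\Phi_{\rho}(a^{\rho}) = \Phi(a) = 1/(\pi y)$. For $\omega = u+iv$ I write $|F_{z}(\omega)|^{\rho} = a^{\rho}s$ with $s = y^{2}/\big((u-x)^{2}+(v+y)^{2}\big)$, and since $(v+y)^{2} > y^{2}$ one has $s \in (0,1)$. The convexity inequality then yields $\Phi(|F_{z}(\omega)|) = \Phi_{\rho}(a^{\rho}s) \leq s/(\pi y) = \tfrac{1}{\pi}\,y/\big((u-x)^{2}+(v+y)^{2}\big)$. Integrating this Poisson-type kernel in $u$ gives $\int_{\mathbb{R}}\Phi(|F_{z}(u+iv)|)\,du \leq y/(v+y) \leq 1$ for every $v > 0$, so with $\lambda = 1$ the defining supremum over $v$ stays $\leq 1$ and $\|F_{z}\|_{H^{\Phi}}^{lux} \leq 1$.

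For $G_{z}$ the mechanism is identical with the larger exponent. Writing $b = \Phi^{-1}(1/(C_{\alpha}y^{2+\alpha}))$ and again using $|\omega-\overline{z}| > y$, the same step produces $\Phi(|G_{z}(\omega)|) \leq \tfrac{1}{C_{\alpha}}\, y^{2+\alpha}/|\omega-\overline{z}|^{4+2\alpha}$. Integrating against $dV_{\alpha}(\omega) = v^{\alpha}\,du\,dv$, I first apply Lemma \ref{pro:main26} with exponent $4+2\alpha$ to the $u$-integral, which contributes $B\big(\tfrac12,\tfrac{3+2\alpha}{2}\big)(v+y)^{-3-2\alpha}$, and then Lemma \ref{pro:main27} to $\int_{0}^{\infty} v^{\alpha}(v+y)^{-3-2\alpha}\,dv = B(1+\alpha,2+\alpha)\,y^{-2-\alpha}$. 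The two Beta factors multiply to exactly $C_{\alpha}$, giving $\int_{\mathbb{C}_{+}}\Phi(|G_{z}(\omega)|)\,dV_{\alpha}(\omega) \leq \tfrac{y^{2+\alpha}}{C_{\alpha}}\cdot C_{\alpha}\,y^{-2-\alpha} = 1$, hence $\|G_{z}\|_{A_{\alpha}^{\Phi}}^{lux} \leq 1$.

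There is no serious obstacle in this computation; the point needing care is the strict bound $|\omega-\overline{z}| > y$ throughout $\mathbb{C}_{+}$, which is precisely what legitimizes the convexity estimate $\Phi_{\rho}(sT) \leq s\,\Phi_{\rho}(T)$ (valid only for $s \leq 1$). The single piece of design is recognizing that the constant $C_{\alpha}$ in the statement is exactly the product of the two Beta integrals, so that the Bergman bound closes to precisely $1$ rather than merely to a finite constant.
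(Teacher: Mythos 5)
Your proof is correct and follows essentially the same route as the paper's: both bound $\Phi(|F_z(\omega)|)$ and $\Phi(|G_z(\omega)|)$ pointwise by a constant times the appropriate power of $y/|\omega-\overline{z}|^2$ using the fact that $t\mapsto\Phi(t)/t^{\rho}$ is non-decreasing (your convexity inequality $\Phi_{\rho}(sT)\le s\Phi_{\rho}(T)$ is the same fact in disguise), and then integrate via Lemma \ref{pro:main26} and Lemma \ref{pro:main27}, with the two Beta factors recombining into $C_{\alpha}$. Your explicit tracking of the constants down to exactly $1$, and the remark on the branch of $(\omega-\overline{z})^{-2/\rho}$, are slightly more careful than the paper's write-up but do not change the argument.
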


\begin{proof}
Fix  $z=x+iy\in \mathbb{C}_{+}$. By construction $F_{z}$  ad  $G_{z}$ are analytic functions which does not vanish on $\mathbb{C_{+}}$. For   $\omega=u+iv \in \mathbb{C_{+}}$, we have $$  \frac{y^{2}}{|(u-x)+i(y+v)|^{2}} \leq 1.   $$
Put  $\rho=1$ if $\Phi$ is convex and $\rho=a_{\Phi}$ if  $\Phi \in \mathscr{L}$, and 
 $$ C_{\alpha}:= B\left(1+\alpha, 2+\alpha\right)B\left(\frac{1}{2}, \frac{3+2\alpha}{2}\right).
       $$
Since the function $t\mapsto \frac{\Phi(t)}{t^{\rho}}$
 is non-decreasing on $\mathbb{R}_{+}^{*}$, we deduce that
$$  \int_{\mathbb{R}}\Phi\left(|F_{z}(u+iv)|\right)du \lesssim   \frac{y}{\pi }\int_{\mathbb{R}}\frac{1}{|(u-x)+i(y+v)|^{2}}du  $$
and  
$$  \int_{\mathbb{C}_{+}}\Phi(|G_{z}(\omega)|)dV_{\alpha}(\omega)  \lesssim  \frac{y^{2+\alpha}}{C_{\alpha}}\int_{0}^{\infty}\left( \int_{\mathbb{R}}\frac{du}{ |(u-x)+i(v+y)|^{4+2\alpha}}\right)v^{\alpha}dv.  $$
According to Lemma \ref{pro:main26}, we have  
$$   \int_{\mathbb{R}}\frac{1}{|(u-x)+i(y+v)|^{2}}du
= B\left(\frac{1}{2}, \frac{1}{2} \right)\frac{1}{y+v}      $$
and 
$$ \int_{\mathbb{R}}\dfrac{du}{|(u-x)+i(v+y)|^{4+2\alpha}}= B\left(\frac{1}{2}, \frac{3+2\alpha}{2}\right)\frac{1}{(v+y)^{3+2\alpha}}.        $$
We deduce that 
$$  \int_{\mathbb{R}}\Phi\left(|F_{z}(u+iv)|\right)du \lesssim  1,  ~~\forall~ v > 0 $$
and
$$  \int_{\mathbb{C}_{+}}\Phi(|G_{z}(\omega)|)dV_{\alpha}(\omega)  \lesssim 1,   $$
since 
 $$ \int_{0}^{\infty}\dfrac{v^{\alpha}}{(y+v)^{3+2\alpha}}dv = B(1+\alpha, 2+\alpha)\frac{1}{y^{2+\alpha}}, $$
thanks to Lemma \ref{pro:main27}. Therefore,  $F_{z} \in H^{\Phi}(\mathbb{C_{+}})$ with $\|F_{z}\|_{H^{\Phi}}^{lux}\leq 1$ and  $G_{z}\in A^{\Phi}_{\alpha}(\mathbb{C_{+}})$ with $\|G_{z}\|_{A^{\Phi}_{\alpha}}^{lux}\leq 1$.
\end{proof}
 
 \section{Some characterizations of Carleson measures.}
In this section, we give among others, a general characterization of an ($s,\Phi$)-Carleson measure.
\begin{proposition}\label{pro:main14a9}
Let $s>0$, $\alpha > -1$ and $\Phi_{1},\Phi_{2}$ be two one-to-one growth functions. The following assertions are equivalent.
\begin{itemize}
\item[(i)] $V_{\alpha}$ is a $(s,\Phi_{2} \circ \Phi_{1}^{-1})-$Carleson measure.
\item[(ii)] There exists a constant $C>0$ such that for all $t > 0$ 
\begin{equation}\label{eq:ibejection1q22}
  \Phi_{1}^{-1}(t^{s}) \leq \Phi_{2}^{-1}(Ct^{2+\alpha})
.\end{equation}
\end{itemize}
\end{proposition}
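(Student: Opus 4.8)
The plan is to collapse both assertions onto a single pointwise growth estimate for $\Phi_2\circ\Phi_1^{-1}$, using the exact area of a Carleson square together with the fact that $\Phi_2$ is an increasing bijection of $\mathbb{R}_+$.

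First I would unwind assertion (i). By Definition \eqref{eq:ual65iaq5rsleson}, the statement that $V_\alpha$ is a $(s,\Phi_2\circ\Phi_1^{-1})$-Carleson measure means that there is a constant $C>0$ with
$$|Q_I|_\alpha\le\frac{C}{(\Phi_2\circ\Phi_1^{-1})\!\left(1/|I|^{s}\right)}$$
for every interval $I$ of nonzero length. Inserting the exact value $|Q_I|_\alpha=\frac{1}{1+\alpha}|I|^{2+\alpha}$ from \eqref{eq:fo4n1l} and clearing denominators, this is the same as
$$(\Phi_2\circ\Phi_1^{-1})\!\left(\tfrac{1}{|I|^{s}}\right)\le\frac{C(1+\alpha)}{|I|^{2+\alpha}},\qquad\forall\,|I|>0.$$
Since the lengths $|I|$ run over all of $\mathbb{R}_+^{*}$, the substitution $t=1/|I|$ loses no information and puts (i) into the clean form: there is $C'>0$ with $\Phi_2(\Phi_1^{-1}(t^{s}))\le C' t^{2+\alpha}$ for all $t>0$.

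Next I would compare this with (ii). Because $\Phi_1$ and $\Phi_2$ are one-to-one growth functions, each is a homeomorphism of $\mathbb{R}_+$; in particular $\Phi_2$ is strictly increasing, so composing inequality \eqref{eq:ibejection1q22}, namely $\Phi_1^{-1}(t^{s})\le\Phi_2^{-1}(C t^{2+\alpha})$, on the left with $\Phi_2$ preserves its direction and yields exactly $\Phi_2(\Phi_1^{-1}(t^{s}))\le C t^{2+\alpha}$. Conversely, applying the increasing map $\Phi_2^{-1}$ to this last inequality recovers \eqref{eq:ibejection1q22}. Thus (i) and (ii) are both equivalent to the single condition $\Phi_2(\Phi_1^{-1}(t^{s}))\lesssim t^{2+\alpha}$ on $\mathbb{R}_+^{*}$, and the equivalence follows.

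I do not expect any genuine obstacle here: the argument is a change of variable plus the monotone bijectivity of $\Phi_2$. The only points deserving a line of care are that interval lengths exhaust $\mathbb{R}_+^{*}$ (so no values of the argument are missed under $t=1/|I|$) and that the constants in the two formulations agree up to the harmless factor $1+\alpha$ coming from the area normalization in \eqref{eq:fo4n1l}.
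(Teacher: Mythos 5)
Your proof is correct and follows essentially the same route as the paper: both directions reduce to the exact identity $|Q_I|_\alpha=\frac{1}{1+\alpha}|I|^{2+\alpha}$, the substitution $t=1/|I|$, and the fact that $\Phi_2$ is an increasing bijection so that (ii) can be rewritten as $\Phi_2(\Phi_1^{-1}(t^s))\le Ct^{2+\alpha}$. Your observation that both assertions collapse onto this single pointwise condition is exactly the content of the paper's two implications.
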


\begin{proof} 
Show that (i) implies (ii). \\
Fix $t > 0$ and let  $I$ an interval such that  $|I|=\frac{1}{t}$. Consider $Q_{I}$ the Carleson square associated with $I$. 
Since $V_{\alpha}$ is a  $(s,\Phi_{2} \circ \Phi_{1}^{-1})-$Carleson measure, we have
$$  V_{\alpha}(Q_{I}) \leq \dfrac{C}{\Phi_{2} \circ \Phi_{1}^{-1}\left(\frac{1}{|I|^{s}}\right)} 
\Rightarrow  \dfrac{1}{1+\alpha}\dfrac{1}{t^{2+\alpha}} \leq \dfrac{C}{\Phi_{2} \circ \Phi_{1}^{-1}(t^{s})}  
\Rightarrow \Phi_{2} \circ \Phi_{1}^{-1}(t^{s}) \leq (1+\alpha) C t^{\alpha+2}.
          $$
For the converse,  we suppose that (ii) is true and prove (i).\\
Let $I$ be an interval of nonzero length and $Q_{I}$ the Carleson square associated with $I$.
Since the inequality (\ref{eq:ibejection1q22}) is satisfied, we have          
\begin{align*}
\Phi_{1}^{-1}\left( \frac{1}{|I|^{s}} \right) \leq \Phi_{2}^{-1}\left(\frac{C}{|I|^{\alpha+2}} \right)  	&\Rightarrow  \Phi_{2} \circ \Phi_{1}^{-1}\left(\frac{1}{|I|^{s}}\right) \leq \frac{C}{|I|^{\alpha+2}} \\
&\Rightarrow  \Phi_{2} \circ \Phi_{1}^{-1}\left(\frac{1}{|I|^{s}}\right) \leq \frac{C}{(1+\alpha)V_{\alpha}(Q_{I})} \\	
&\Rightarrow V_{\alpha}(Q_{I}) \leq \frac{C'}{\Phi_{2} \circ \Phi_{1}^{-1}\left(\frac{1}{|I|^{s}}\right)}.	
\end{align*}
\end{proof}

\begin{proposition}\label{pro:main49qa2}
Let $s\geq 1$ and  $\Phi\in \mathscr{U}$. Put
$$  d\mu(x+iy)=\dfrac{dx dy}{y^{2} \Phi\left(\frac{1}{y^{s}}\right)},~~  \forall~x+iy\in \mathbb{C}_{+}.   $$
If  $\Phi\in \nabla_2$ then $\mu$ is a
 measure $(s, \Phi)-$Carleson. In particular, the converse is true for $s=1$.
\end{proposition}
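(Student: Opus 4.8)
The plan is to evaluate $\mu(Q_I)$ by Fubini and reduce the $(s,\Phi)$-Carleson estimate to a one-dimensional integral whose convergence at $y=0$ is governed exactly by the $\nabla_2$ condition. Using the description \eqref{eq:ualp5rsleson} of the Carleson square and the product form of $d\mu$, I first write, for an arbitrary interval $I$ of nonzero length,
$$\mu(Q_I)=\int_I dx\int_0^{|I|}\frac{dy}{y^2\Phi\left(\frac{1}{y^s}\right)}=|I|\int_0^{|I|}\frac{dy}{y^2\Phi\left(\frac{1}{y^s}\right)}.$$
Everything thus comes down to controlling the inner integral as $y\to0^+$, where $1/y^s\to\infty$.

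For the direct implication I would use the characterization recorded earlier that $\Phi\in\mathscr{U}\cap\nabla_2$ is equivalent to $1<a_\Phi$; since $s\geq1$ this gives $sa_\Phi\geq a_\Phi>1$. Because $t\mapsto\Phi(t)/t^{a_\Phi}$ is nondecreasing, for $0<y\leq|I|$ one has $1/y^s\geq 1/|I|^s$ and hence
$$\Phi\left(\frac{1}{y^s}\right)\geq\Phi\left(\frac{1}{|I|^s}\right)\left(\frac{|I|}{y}\right)^{sa_\Phi}.$$
Inserting this bound, the surviving integral $\int_0^{|I|}y^{sa_\Phi-2}\,dy$ converges precisely because $sa_\Phi>1$ and equals $|I|^{sa_\Phi-1}/(sa_\Phi-1)$. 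Multiplying by $|I|$ cancels the powers of $|I|$ and produces $\mu(Q_I)\leq(sa_\Phi-1)^{-1}\Phi(1/|I|^s)^{-1}$, which is exactly \eqref{eq:ual65iaq5rsleson}; so $\mu$ is $(s,\Phi)$-Carleson.

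For the converse when $s=1$ I would recast the integral into the shape of Lemma \ref{pro:main40l6aqlj}. With $\widetilde\Omega(t)=1/\Phi(1/t)$, substituting $u=1/y$ and then $u=1/w$ gives
$$\mu(Q_I)=|I|\int_{1/|I|}^\infty\frac{du}{\Phi(u)}=|I|\int_0^{|I|}\frac{\widetilde\Omega(w)}{w^2}\,dw.$$
Setting $\tau=|I|$ and using $1/\Phi(1/\tau)=\widetilde\Omega(\tau)$, the hypothesis that $\mu$ is $\Phi$-Carleson becomes $\int_0^\tau w^{-2}\widetilde\Omega(w)\,dw\leq C\,\widetilde\Omega(\tau)/\tau$ for all $\tau>0$. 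This is condition (ii) of Lemma \ref{pro:main40l6aqlj} applied to $\widetilde\Omega$, which belongs to $\mathscr{U}$ by Proposition \ref{pro:main44}; hence $\widetilde\Omega\in\nabla_2$, and Proposition \ref{pro:main44} then yields $\Phi\in\mathscr{U}\cap\nabla_2$, i.e. $\Phi\in\nabla_2$.

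The step I expect to be decisive is controlling the endpoint integrability at $y=0$ and making explicit that it is the strict inequality $a_\Phi>1$ (equivalently $\nabla_2$) that rescues convergence when $s=1$. Once the integral is rewritten through $\widetilde\Omega$, recognizing the Carleson bound as condition (ii) of Lemma \ref{pro:main40l6aqlj} is the only genuinely nontrivial move, the rest being the two elementary substitutions and the monotonicity estimate above.
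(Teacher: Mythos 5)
Your argument is correct, and your converse for $s=1$ coincides with the paper's: both rewrite $\mu(Q_I)=|I|\int_0^{|I|}\widetilde{\Omega}(w)w^{-2}\,dw$ with $\widetilde{\Omega}(t)=1/\Phi(1/t)$, recognize the Carleson bound as condition (ii) of Lemma \ref{pro:main40l6aqlj} for $\widetilde{\Omega}\in\mathscr{U}$, and transfer $\nabla_2$ back to $\Phi$ via Proposition \ref{pro:main44}. For the direct implication the two proofs diverge in mechanism. The paper stays with $\widetilde{\Omega}$: it bounds $y^{s-1}\le|I|^{s-1}$, substitutes $u=y^s$ to get $\mu(Q_I)\le s^{-1}|I|^s\int_0^{|I|^s}\widetilde{\Omega}(u)u^{-2}\,du$, and then invokes the same Dini-type inequality of Lemma \ref{pro:main40l6aqlj}(ii), applied to $\widetilde{\Omega}\in\mathscr{U}\cap\nabla_2$ (again via Proposition \ref{pro:main44}). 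You instead use the recorded equivalence $\Phi\in\mathscr{U}\cap\nabla_2\Leftrightarrow a_\Phi>1$ together with the monotonicity of $t\mapsto\Phi(t)/t^{a_\Phi}$ to obtain the pointwise lower bound $\Phi(1/y^s)\ge\Phi(1/|I|^s)(|I|/y)^{sa_\Phi}$ and integrate the power $y^{sa_\Phi-2}$ directly, convergence at $0$ being exactly $sa_\Phi>1$. Your route is more elementary and produces the explicit constant $(sa_\Phi-1)^{-1}$ in \eqref{eq:ual65iaq5rsleson}, at the cost of routing through the index characterization; the paper's route is more uniform, since the single lemma on $\widetilde{\Omega}$ drives both directions. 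One cosmetic remark: your two successive substitutions $u=1/y$ and then $u=1/w$ undo each other --- the identity $1/\bigl(y^2\Phi(1/y)\bigr)=\widetilde{\Omega}(y)/y^2$ is immediate and no change of variables is needed there.
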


\begin{proof}
Put
$$ \widetilde{\Omega}(t)=\frac{1}{ \Phi\left(\frac{1}{t}\right)}, ~\forall~t>0 \hspace*{0.5cm}\textrm{and} \hspace*{0.5cm}  \widetilde{\Omega}(0)=0.      $$
According to Proposition \ref{pro:main44},  $\Phi\in \mathscr{U} \cap \nabla_2$.\\
Let $I$ be an interval of nonzero length and $Q_{I}$ the Carleson square associated with $I$.  We have
\begin{align*}
\mu(Q_{I}) &=  \int_{0}^{|I|}\int_{I}\frac{\widetilde{\Omega}(y^{s})}{y^{2}}dx dy = |I|\int_{0}^{|I|}\frac{\widetilde{\Omega}(y^{s})}{y^{2s}}y^{s-1}y^{s-1}dy \\
&\leq  s^{-1}|I|^{s} \int_{0}^{|I|^{s}}\frac{\widetilde{\Omega}(y)}{y^{2}}dy 
\leq s^{-1}|I|^{s}C\frac{\widetilde{\Omega}(|I|^{s})}{|I|^{s}}=\frac{C/s}{\Phi\left(\frac{1}{|I|^{s}}\right)},  
\end{align*}
thanks to Lemma \ref{pro:main40l6aqlj}.
In particular, for $s=1$, we have
$$  \mu(Q_{I}) \lesssim \widetilde{\Omega}(|I|)  \Leftrightarrow  \int_{0}^{|I|}\frac{\widetilde{\Omega}(y)}{y^{2}}dy \lesssim \frac{\widetilde{\Omega}(|I|)}{|I|}.  $$
\end{proof}

\begin{lemme}\label{pro:main132}
Let  $\alpha >-1$,  $\Phi\in \mathscr{U}$ and  $\mu$ be a positive Borel measure on $\mathbb{C}_{+}$.  Put
$$ \widetilde{\Omega}(t)=\frac{1}{ \Phi\left(\frac{1}{t}\right)}, ~\forall~t>0 \hspace*{0.5cm}\textrm{and} \hspace*{0.5cm}  \widetilde{\Omega}(0)=0.      $$
The following assertions are satisfied
\begin{itemize}
\item[(i)]  $\mu$  is a measure $\Phi-$Carleson if and only if there exists a constant $C_{1}>0$ such that for all  $f\in L^{1}\left(\mathbb{R},\frac{dt}{1+t^{2}}\right)$ and any $\lambda >0$,
\begin{equation}\label{eq:ibejection}
\mu\left( \left\{z\in \mathbb{C}_{+} :  |U_{f}(z)| > \lambda \right\} \right) \leq C_{1}\widetilde{\Omega}\left(|\{ x\in \mathbb{R} : \mathcal{M}_{HL}(f)(x) > \lambda  \}  |\right),\end{equation}
where $U_{f}$ is the Poisson integral of $f$.	
\item[(ii)]  $\mu$  is a measure $(\alpha,\Phi)-$Carleson if and only if there exists a constant $C_{2}>0$ such that for  $f \in L^{\Phi}\left(\mathbb{C_{+}}, dV_{\alpha}\right)$ and $\lambda >0$,
\begin{equation}\label{eq:ibejectialfa}
\mu\left(\left\{z\in \mathbb{C}_{+} :  \mathcal{M}_{V_{\alpha}}^{\mathcal{D}^{\beta}}(f)(z) > \lambda \right\}\right) \leq C_{2}\widetilde{\Omega}\left(\left|\left\{z\in \mathbb{C}_{+} :  \mathcal{M}_{V_{\alpha}}^{\mathcal{D}^{\beta}}(f)(z) > \lambda \right\}\right|_{\alpha}\right).\end{equation}
\end{itemize}
\end{lemme}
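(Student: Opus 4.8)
The plan is to treat each equivalence by the same two-sided scheme: for the direction $(\Leftarrow)$ I derive the Carleson estimate by feeding a well-chosen test function into the distributional inequality, and for $(\Rightarrow)$ I cover the relevant level set by Carleson squares and sum the Carleson bounds over them. The engine in the summation step is that, by Proposition \ref{pro:main44}, $\widetilde{\Omega}\in\mathscr{U}$, so $\widetilde{\Omega}$ is convex with $t\mapsto\widetilde{\Omega}(t)/t$ nondecreasing; this yields the superadditivity $\sum_{j}\widetilde{\Omega}(a_{j})\le\widetilde{\Omega}\big(\sum_{j}a_{j}\big)$ (since $\widetilde{\Omega}(a_{j})=\tfrac{\widetilde{\Omega}(a_{j})}{a_{j}}a_{j}\le\tfrac{\widetilde{\Omega}(S)}{S}a_{j}$ with $S=\sum_{j}a_{j}$) together with the $\Delta_{2}$-condition. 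I will use throughout that $\Phi$-Carleson reads $\mu(Q_{I})\le C\widetilde{\Omega}(|I|)$ and, via \eqref{eq:fo4n1l}, that $(\alpha,\Phi)$-Carleson reads $\mu(Q_{I})\le C\widetilde{\Omega}(|I|^{2+\alpha})=C\widetilde{\Omega}\big((1+\alpha)|Q_{I}|_{\alpha}\big)$.

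I would establish $(ii)$ first, since the dyadic maximal function makes both directions exact. For $(\Rightarrow)$, fix $\lambda>0$ and let $\{Q_{I_{j}}\}_{j}$ be the maximal dyadic Carleson squares on which the $V_{\alpha}$-average of $|f|$ exceeds $\lambda$; by the nesting property of $\mathcal{D}^{\beta}$ these are pairwise disjoint and $\{z:\mathcal{M}_{V_{\alpha}}^{\mathcal{D}^{\beta}}(f)(z)>\lambda\}=\bigsqcup_{j}Q_{I_{j}}$. Then $\mu(\{\mathcal{M}_{V_{\alpha}}^{\mathcal{D}^{\beta}}(f)>\lambda\})=\sum_{j}\mu(Q_{I_{j}})\lesssim\sum_{j}\widetilde{\Omega}(|Q_{I_{j}}|_{\alpha})\le\widetilde{\Omega}\big(\sum_{j}|Q_{I_{j}}|_{\alpha}\big)=\widetilde{\Omega}(|\{\mathcal{M}_{V_{\alpha}}^{\mathcal{D}^{\beta}}(f)>\lambda\}|_{\alpha})$, using the Carleson bound, $\Delta_{2}$ to absorb the factor $1+\alpha$, and superadditivity; note the threshold $\lambda$ matches on both sides. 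For $(\Leftarrow)$, given an interval $I$ I use Lemma \ref{pro:main80} to find $J\in\mathcal{D}^{\beta}$ with $I\subseteq J$ and $|J|\le 6|I|$, and test with $f=\chi_{Q_{J}}$ at $\lambda=\tfrac12$. Since the $V_{\alpha}$-average of $\chi_{Q_{J}}$ over $Q_{J}$ equals $1$ while over any strictly larger dyadic box it is at most $2^{-(2+\alpha)}<\tfrac12$, the level set $\{\mathcal{M}_{V_{\alpha}}^{\mathcal{D}^{\beta}}(\chi_{Q_{J}})>\tfrac12\}$ is exactly $Q_{J}$, of $V_{\alpha}$-measure $\tfrac{1}{1+\alpha}|J|^{2+\alpha}$. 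Hence $\mu(Q_{I})\le\mu(Q_{J})\le C_{2}\widetilde{\Omega}\big(\tfrac{1}{1+\alpha}|J|^{2+\alpha}\big)\lesssim\widetilde{\Omega}(|I|^{2+\alpha})$ by $\Delta_{2}$ and $|J|\le 6|I|$, which is the $(\alpha,\Phi)$-Carleson condition.

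For $(i)$ the same scheme applies with the Poisson integral in place of the averaging operator. For $(\Leftarrow)$ I take $f=\chi_{I}$ and the fixed level $\lambda=c_{0}$, where $c_{0}>0$ is the absolute constant with $U_{\chi_{I}}(z)\ge c_{0}$ for all $z\in Q_{I}$ (the harmonic measure of $I$ seen from its Carleson square is bounded below); then $Q_{I}\subseteq\{|U_{\chi_{I}}|>c_{0}\}$, and the weak-type $(1,1)$ bound $|\{\mathcal{M}_{HL}(\chi_{I})>c_{0}\}|\lesssim|I|$ with $\Delta_{2}$ gives $\mu(Q_{I})\lesssim\widetilde{\Omega}(|I|)$. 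For $(\Rightarrow)$ the key pointwise estimate is that for every $t_{0}\in(x-y,x+y)$ one has $|U_{f}(x+iy)|\le C\,\mathcal{M}_{HL}(f)(t_{0})$, proved by splitting the Poisson integral into the dyadic annuli $\{2^{k-1}y\le|t-x|<2^{k}y\}$, each contained in an interval about $x$ of length $\sim 2^{k}y$ that contains $t_{0}$. Taking the infimum over such $t_{0}$ shows that $|U_{f}(x+iy)|>\lambda$ forces the connected interval $(x-y,x+y)$ into a single component $I_{j}$ of $\{\mathcal{M}_{HL}(f)>c\lambda\}$, so $z\in Q_{I_{j}}$; summing $\mu$ over the disjoint components and invoking superadditivity yields $\mu(\{|U_{f}|>\lambda\})\lesssim\widetilde{\Omega}(|\{\mathcal{M}_{HL}(f)>c\lambda\}|)$.

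The main obstacle is exactly the absolute constant $c<1$ appearing in $(i)$: unlike the exact dyadic decomposition of $(ii)$, the comparison between the Poisson integral and the Hardy--Littlewood maximal function holds only up to a multiplicative constant, which shifts the threshold from $\lambda$ to $c\lambda$ on the right. I would dispose of it by absorbing the dilation: one rescales $\lambda\mapsto\lambda/c$, and since the inequality is always applied after integration against $\Phi_{2}'(\lambda)\,d\lambda$, the change of variables $\lambda\mapsto c\lambda$ costs only a factor controlled by the upper- and lower-type (equivalently $\Delta_{2}$) bounds on the growth functions. Apart from this bookkeeping, the only remaining routine points are the explicit lower bound for $U_{\chi_{I}}$ on $Q_{I}$ and the elementary identification of the level sets of the dyadic maximal operator.
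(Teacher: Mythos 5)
Your treatment of part (ii) in both directions, and of the ``if'' directions of (i) and (ii), is sound and close in spirit to the paper: the paper also proves the ``if'' directions by testing the distributional inequality against indicators of $I$ or $Q_{I}$ (it scales the test function by $\Phi^{-1}(1/|I|)$ and invokes the Orlicz weak-type bound of Proposition \ref{pro:main9aaq9}, whereas you fix the level at an absolute constant and use the weak $(1,1)$ bound --- both work), while it simply cites \cite{djesehb} for the two ``only if'' directions, which you prove from scratch. Your decomposition of $\left\{\mathcal{M}_{V_{\alpha}}^{\mathcal{D}^{\beta}}(f)>\lambda\right\}$ into maximal dyadic Carleson squares, combined with the superadditivity of $\widetilde{\Omega}$ coming from the monotonicity of $t\mapsto\widetilde{\Omega}(t)/t$, is exactly the right mechanism for (ii) and is carried out correctly.

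The genuine gap is in the ``only if'' direction of (i). Your covering argument places $\{|U_{f}|>\lambda\}$ inside $\bigcup_{j}Q_{I_{j}}$ only for the components $I_{j}$ of $\{\mathcal{M}_{HL}(f)>\lambda/C\}$ with some absolute $C>1$: the pointwise comparison $|U_{f}(x+iy)|\leq C\,\mathcal{M}_{HL}(f)(t_{0})$ for $t_{0}$ in the shadow interval cannot be made with constant $1$. What you obtain is therefore $\mu(\{|U_{f}|>\lambda\})\lesssim\widetilde{\Omega}\left(|\{\mathcal{M}_{HL}(f)>\lambda/C\}|\right)$, and since $\{\mathcal{M}_{HL}(f)>\lambda/C\}\supseteq\{\mathcal{M}_{HL}(f)>\lambda\}$ this is \emph{weaker} than (\ref{eq:ibejection}). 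Your proposed repair does not close this gap: rescaling $\lambda$ merely relabels the same inequality (applying it to $Cf$ at level $C\lambda$ returns you to the starting point), and there is no general bound of $|\{\mathcal{M}_{HL}(f)>\lambda/C\}|$ by a constant multiple of $|\{\mathcal{M}_{HL}(f)>\lambda\}|$ --- take $f$ with a long plateau at a height strictly between $\lambda/C$ and $\lambda$. The remark that the constant ``washes out after integration against $\Phi_{2}'(\lambda)\,d\lambda$'' concerns how the lemma is \emph{used} in the proof of Theorem \ref{pro:main127}, not the lemma itself, which asserts a fixed-$\lambda$ inequality. (For what it is worth, the paper does not prove this implication either; it invokes \cite{djesehb}.) To be complete you would need either to show that $\{|U_{f}|>\lambda\}$ is covered by Carleson boxes over bounded dilates of the components of $\{\mathcal{M}_{HL}(f)>\lambda\}$ itself, or to restate (i) in the weakened form your argument actually delivers.
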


\begin{proof}
(i) That $\mu$  is a measure $\Phi-$Carleson implies that (\ref{eq:ibejection}) holds, has already been proved in \cite[Lemma 4.2]{djesehb}.\\
Suppose the inequality (\ref{eq:ibejection}) is satisfied and show that $\mu$  is a measure $\Phi-$Carleson.\\
Let $I$ be an interval of $\mathbb{R}$ of non-zero length and $Q_{I}$ the Carleson square associated with $I$. Put
$$ \lambda= \frac{1}{2} \Phi^{-1}\left(\frac{1}{|I|}\right)    $$
and
$$ f=2\lambda\chi_{I}.   $$
By construction  $f\in L^{\Phi}(\mathbb{R})$ and $\|f\|_{L^{\Phi}}^{lux}\leq 1$. Indeed
$$ \int_{\mathbb{R}}\Phi(|f(x)|)dx =  \int_{I}\Phi \left(\Phi^{-1}\left(\frac{1}{|I|}\right)\right)dx = 1.       $$
Let $x_{0}+iy_{0} \in Q_{I}$. We have  
$$ \lambda < f(x_{0}) =\liminf_{y \to 0}U_{f}(x_{0}+iy) \leq U_{f}(x_{0}+iy_{0}), $$
where $U_{f}$ is the Poisson integral of $f$. We deduce that 
$$  Q_{I} \subset \left\{z\in \mathbb{C}_{+} :  |U_{f}(z)| > \lambda \right\}.   $$
Since inequality (\ref{eq:ibejection}) is satisfied, we have
\begin{align*}
\mu(Q_{I}) &\lesssim \mu\left( \left\{z\in \mathbb{C}_{+} :  |U_{f}(z)| > \lambda \right\} \right) \\
&\lesssim \widetilde{\Omega}\left(\left|\left\{ x\in \mathbb{R} : \mathcal{M}_{HL}(f)(x) >  \lambda \right\}  \right|\right) \\
&\lesssim\widetilde{\Omega} \left(  \frac{1}{\Phi\left(\lambda \right)}  \right)
\lesssim\widetilde{\Omega}\left( |I|  \right).
\end{align*}
\medskip

(ii) Again, that $\mu$  is a measure $(\alpha,\Phi)-$Carleson implies that (\ref{eq:ibejectialfa}) holds was proved in \cite[Lemma 4.3]{djesehb}. Let us prove the converse. Let $I$ be an interval of nonzero length and $Q_{I}$ the Carleson square associated with $I$. Put
$$ \lambda= \frac{1}{2} \Phi^{-1}\left(\frac{1+\alpha}{|I|^{2+\alpha}}\right)    $$
and
$$ f=2\lambda\chi_{Q_{I}}.   $$
By construction  $f\in L^{\Phi}(\mathbb{C_{+}}, dV_{\alpha})$ and $\|f\|_{L_{\alpha}^{\Phi}}^{lux}\leq 1$. Indeed
$$ \int_{\mathbb{C}_{+}}\Phi(|f(z)|)dV_{\alpha}(z) \leq  \int_{Q_{I}}\Phi \left(\Phi^{-1}\left(\frac{1+\alpha}{|I|^{2+\alpha}}\right)\right)dV_{\alpha}(z) = 1.       $$
By Lemma \ref{pro:main80}, there are $\beta \in \left\{0, 1/3\right\}$ and $J \in \mathcal{D}^{\beta}$ such that $I\subset J$ and $|J|\leq 6|I|$. Consider  $Q_{J}$ the Carleson square associated with $J$.
Let $z \in Q_{I}$. We have
$$ \lambda <  \frac{\chi_{Q_{I}}(z)}{|Q_{I}|_{\alpha}}\int_{Q_{I}}f(\omega)dV_{\alpha}(\omega) \lesssim  \frac{\chi_{Q_{J}}(z)}{|Q_{J}|_{\alpha}}\int_{Q_{J}}f(\omega)dV_{\alpha}(\omega) \lesssim \mathcal{M}_{V_{\alpha}}^{\mathcal{D}^{\beta}}f(z). $$
We deduce that 
$$  Q_{I}\subset \left\{ z\in \mathbb{C}_{+} :  \mathcal{M}_{V_{\alpha}}^{\mathcal{D}^{\beta}}f(z) > \lambda \right\}.       $$
Since the inequality (\ref{eq:ibejectialfa}) is satisfied and by Chebychev's inequality, we have
\begin{align*}
\mu(Q_{I}) &\lesssim \mu\left(\left\{ z\in \mathbb{C}_{+} :  \mathcal{M}_{V_{\alpha}}^{\mathcal{D}^{\beta}}f(z) > \lambda \right\}\right)\\
&\lesssim\widetilde{\Omega}\left(\left|\left\{ z\in \mathbb{C}_{+} :  \mathcal{M}_{V_{\alpha}}^{\mathcal{D}^{\beta}}f(z) > \lambda \right\}\right|_{\alpha}\right)\\
&\lesssim\widetilde{\Omega}\left( \frac{1}{\Phi\left(\Phi^{-1}\left(\frac{1}{|I|^{2+\alpha}}\right)\right)}\right) 
\lesssim\widetilde{\Omega}\left(|I|^{2+\alpha}\right).
\end{align*}

\end{proof}
The following is a generalization of \cite[Theorem 4.1]{djesehb}
\begin{theorem}\label{pro:main14paq1}
Let $s>0$ be a real,  $\Phi_{1}, \Phi_{2}$ two one-to-one growth functions and  $\mu$ a positive Borel measure on $\mathbb{C}_{+}$. If $\Phi_{2} \in \mathscr{L} \cup\mathscr{U}$ and $\Phi_{1}$ is convex or belongs  $\mathscr{L}$  then the following assertions are equivalent.
\begin{itemize}
\item[(i)]  $\mu$ is a $(s,\Phi_{2}\circ\Phi^{-1}_{1})-$Carleson measure.
\item[(ii)] There exist some constants $\rho \in \{1; a_{\Phi_{1}}\} $ and $C:=C_{s,\Phi_{1},\Phi_{2}}>0$  such that for all $z=x+iy \in \mathbb{C_{+}}$
\begin{equation}\label{eq:ualphispleson}
 \int_{\mathbb{C}_{+}}\Phi_{2}\left(\Phi^{-1}_{1}\left(\frac{1}{y^{s}}\right) \dfrac{y^{2s/\rho}}{ |\omega-\overline{z}|^{2s/\rho}}\right) d\mu(\omega) \leq C .\end{equation}
\end{itemize}
 \end{theorem}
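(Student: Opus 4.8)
The plan is to prove the two implications separately, using throughout that every $\Phi\in\mathscr{L}\cup\mathscr{U}$ satisfies the $\Delta_{2}$-condition (\ref{eq:delta2}), that $t\mapsto\Phi(t)/t^{a_\Phi}$ is increasing while $t\mapsto\Phi(t)/t^{b_\Phi}$ is decreasing, and that by Proposition \ref{pro:main4pm4} one has $a_{\Phi_1^{-1}}=1/b_{\Phi_1}$ and $b_{\Phi_1^{-1}}=1/a_{\Phi_1}$. Write $\Psi=\Phi_2\circ\Phi_1^{-1}$ and take $\rho=1$ when $\Phi_1$ is convex and $\rho=a_{\Phi_1}$ when $\Phi_1\in\mathscr{L}$, so that (i) reads $\mu(Q_I)\le C/\Psi(1/|I|^{s})$ for every interval $I$.

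For the implication (ii) $\Rightarrow$ (i), fix an interval $I$ of center $x_I$ and put $z=x_I+i|I|$, so that $y=|I|$. For $\omega=u+iv\in Q_I$ we have $|u-x_I|\le|I|/2$ and $0<v<|I|$, whence $|\omega-\overline{z}|^{2}=(u-x_I)^{2}+(v+|I|)^{2}\le 5|I|^{2}$; consequently the argument of $\Phi_2$ in (\ref{eq:ualphispleson}) is at least $5^{-s/\rho}\Phi_1^{-1}(1/|I|^{s})$ on all of $Q_I$. Since $\Phi_2$ is nondecreasing, restricting the integral in (ii) to $Q_I$ gives $\Phi_2\!\left(5^{-s/\rho}\Phi_1^{-1}(1/|I|^{s})\right)\mu(Q_I)\le C$. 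Finally, because $t\mapsto\Phi_2(t)/t^{b_{\Phi_2}}$ is decreasing we have $\Phi_2(5^{-s/\rho}t)\ge 5^{-s b_{\Phi_2}/\rho}\Phi_2(t)$, which absorbs the constant and yields $\mu(Q_I)\lesssim 1/\Psi(1/|I|^{s})$, that is (i).

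For the converse (i) $\Rightarrow$ (ii), fix $z=x+iy$ and decompose $\mathbb{C}_+$ into the annuli $A_k=\{\omega\in\mathbb{C}_+ : 2^{k}y\le|\omega-\overline{z}|<2^{k+1}y\}$ for $k\ge0$; these exhaust $\mathbb{C}_+$ since $|\omega-\overline{z}|>y$ for every $\omega\in\mathbb{C}_+$. On $A_k$ the argument of $\Phi_2$ is $\lesssim\Phi_1^{-1}(1/y^{s})\,2^{-2ks/\rho}$, so by monotonicity and $\Delta_2$ one gets $\int_{A_k}\Phi_2(\cdots)\,d\mu\lesssim\Phi_2\!\left(\Phi_1^{-1}(1/y^{s})2^{-2ks/\rho}\right)\mu(A_k)$. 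Each $A_k$ lies in a Carleson box $Q_{I_k}$ centered at $x$ with $|I_k|\approx 2^{k}y$, so (i) provides $\mu(A_k)\lesssim 1/\Phi_2\!\left(\Phi_1^{-1}(c\,2^{-ks}/y^{s})\right)$, the inner constant $c$ being harmless by $\Delta_2$.

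It then remains to sum the resulting series, and this is where the admissible values of $\rho$ are essential and where I expect the main difficulty. Setting $a=\Phi_1^{-1}(1/y^{s})$ and $b_k=\Phi_1^{-1}(2^{-ks}/y^{s})$, the identity $\Phi_1(b_k)=2^{-ks}\Phi_1(a)$ together with the monotonicity of $\Phi_1(t)/t^{a_{\Phi_1}}$ gives $b_k\ge a\,2^{-ks/a_{\Phi_1}}$; one checks $a\,2^{-2ks/\rho}\le b_k$ in both cases. Since $\Phi_2(t)/t^{a_{\Phi_2}}$ is increasing, the $k$-th term is bounded by $\left(a\,2^{-2ks/\rho}/b_k\right)^{a_{\Phi_2}}\le 2^{ks(1/a_{\Phi_1}-2/\rho)a_{\Phi_2}}$. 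In the convex case $\rho=1$ and $a_{\Phi_1}\ge1$ force $1/a_{\Phi_1}-2/\rho\le-1$, while for $\rho=a_{\Phi_1}$ one has $1/a_{\Phi_1}-2/\rho=-1/a_{\Phi_1}<0$; in either case the exponent is negative, so the series converges geometrically with sum bounded independently of $z$, giving (\ref{eq:ualphispleson}). The crux is exactly this last step: one must verify that the decay $2^{-2ks/\rho}$ built into the test profile outruns the growth of $\mu(Q_{I_k})^{-1}$ dictated by the Carleson condition, and the two legitimate choices of $\rho$ are precisely what makes this happen.
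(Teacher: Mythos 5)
Your proof is correct and follows essentially the same route as the paper's: for (ii)$\Rightarrow$(i) you restrict the integral to the Carleson box where the kernel is bounded below, and for (i)$\Rightarrow$(ii) you decompose $\mathbb{C}_{+}$ into dyadic shells around $\overline{z}$ (the paper uses the differences $Q_{I_{j}}\setminus Q_{I_{j-1}}$ of nested Carleson boxes, which is the same decomposition up to constants) and sum a geometric series, the convergence resting in both write-ups on the fact that $t\mapsto\Phi_{1}^{-1}(t)/t^{1/\rho}$ is non-increasing for the two admissible choices of $\rho$ together with $a_{\Phi_{2}}>0$. The only cosmetic caveat is that in the purely convex case for $\Phi_{1}$ you should invoke the monotonicity of $\Phi_{1}(t)/t$ directly rather than the index $a_{\Phi_{1}}$, which need not be defined there; this changes nothing in the estimate.
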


\proof
Show that (ii) implies (i).
We assume that the inequality (\ref{eq:ualphispleson}) holds.\\
Let $I$ be an interval of nonzero length and $Q_{I}$ its Carleson square.\\
Fix $z_{0}=x_{0}+iy_{0} \in \mathbb{C}_{+}$ and we assume that  $x_{0}$ is the center of $I$ and $|I|=2y_{0}$.\\
Let $\omega=u+iv \in Q_{I}$. We have 
$$ |\omega-\overline{z_{0}}|^{2}=|(u-x_{0})+i(v+y_{0})|^{2} \leq y_{0}^{2}+(3y_{0})^{2}=10y_{0}^{2}.  $$
It follows that 
$$ 1 \leq 10^{s/\rho} \dfrac{y^{2s/\rho}_{0}}{ |\omega-\overline{z_{0}}|^{2s/\rho}}.   $$
Since $\Phi_{1}^{-1}$ is increasing and $t\mapsto\frac{\Phi_{2}(t)}{t^{b_{\Phi_{2}}}}$ is non-increasing on $\mathbb{R}_{+}^{*}$, we have
\begin{align*}
\Phi_{2} \circ \Phi_{1}^{-1}\left(\frac{1}{|I|^{s}}\right) &\leq  \Phi_{2}\left( \Phi_{1}^{-1}\left(\frac{1}{y_{0}^{s}}\right)\dfrac{10^{s/\rho}y^{2s/\rho}_{0}}{ |\omega-\overline{z_{0}}|^{2s/\rho}}\right) \\
&\leq 10^{sb_{\Phi_{2}}/\rho} \Phi_{2}\left( \Phi_{1}^{-1}\left(\frac{1}{y_{0}^{s}}\right)\dfrac{y^{2s/\rho}_{0}}{ |\omega-\overline{z_{0}}|^{2s/\rho}}\right).
\end{align*}
We deduce that
$$  \Phi_{2} \circ \Phi_{1}^{-1}\left(\frac{1}{|I|^{s}}\right) \leq 10^{sb_{\Phi_{2}}/\rho} \Phi_{2}\left( \Phi_{1}^{-1}\left(\frac{1}{y_{0}^{s}}\right)\dfrac{y^{2s/\rho}_{0}}{ |\omega-\overline{z_{0}}|^{2s/\rho}}\right),  ~~ \forall~ \omega \in Q_{I}.          $$
Since the inequality (\ref{eq:ualphispleson}) is satisfied, we have
\begin{align*}
\Phi_{2} \circ \Phi_{1}^{-1}\left(\frac{1}{|I|^{s}}\right)\mu(Q_{I}) &= \int_{Q_{I}}\Phi_{2} \circ \Phi_{1}^{-1}\left(\frac{1}{|I|^{s}}\right)d\mu(\omega)\\ &\leq 10^{sb_{\Phi_{2}}/\rho} \int_{\mathbb{C_{+}}}\Phi_{2}\left( \Phi_{1}^{-1}\left(\frac{1}{y_{0}^{s}}\right)\dfrac{y^{2s/\rho}_{0}}{ |\omega-\overline{z_{0}}|^{2s/\rho}}\right)d\mu(\omega)
\leq 10^{sb_{\Phi_{2}}/\rho}C_{2}.
\end{align*}
We deduce that $$ \mu(Q_{I}) \leq \frac{10^{sb_{\Phi_{2}}/\rho}C_{2}}{\Phi_{2} \circ \Phi_{1}^{-1}\left(\frac{1}{|I|^{s}}\right)}.                   $$
For the converse,  we assume that the inequality (\ref{eq:ual65iaq5rsleson}) holds.\\
Put
\[ \rho =
\left\{
\begin{array}{ll} 1 & \mbox{ if $\Phi_{1}$ is convex }\\ a_{\Phi_{1}} & \mbox{ if $\Phi_{1} \in \mathscr{L}$}
\end{array}
\right. \]
Fix $z_{0}=x_{0}+iy_{0} \in \mathbb{C}_{+}$ and let $j \in \mathbb{N}$.
Consider $I_{j}$ the centered interval $x_{0}$ with $|I_{j}|=2^{j+1}y_{0}$ and $Q_{I_{j}} $ its Carleson square. Put $$ E_{j}:=Q_{I_{j}}\backslash Q_{I_{j-1}},~~\forall ~j\geq 1 ~~~\text{and}~~~ E_{0}=Q_{I_{0}}.  $$
Fix  $j \in \mathbb{N}$ and let  $\omega=u+iv \in \mathbb{C}_{+}$.\\
If $\omega \in E_{0}$ then we have  $$  |\omega-\overline{z_{0}}|^{2}=|(u-x_{0})+i(v+y_{0})|^{2}\geq (v+y_{0})^{2} \geq y_{0}^{2}\geq 2^{-2}y_{0}^{2}.  $$
If $ \omega\in E_{j}$ with $j\geq 1$ then we have  $$ |\omega-\overline{z_{0}}|^{2}=|(u-x_{0})+i(v+y_{0})|^{2}\geq (u-x_{0})^{2} \geq 2^{2(j-1)}y_{0}^{2}.   $$
We deduce that
$$ \frac{y_{0}^{2s/\rho}}{|\omega-\overline{z_{0}}|^{2s/\rho}} \leq \frac{1}{2^{2(j-1)s/\rho}}, ~~ \forall~ \omega \in E_{j},~ \forall~j \geq 0.     $$
Fix  $j \in \mathbb{N}$ and let $\omega \in E_{j}$. Since the functions $t\mapsto\frac{\Phi_{1}^{-1}(t)}{t^{1/\rho}}$ and  $t\mapsto\frac{\Phi_{2}(t)}{t^{b_{\Phi_{2}}}}$ are non-increasing on $\mathbb{R}_{+}^{*}$ and  
 $t\mapsto\frac{\Phi_{2}(t)}{t^{a_{\Phi_{2}}}}$ is  non-decreasing on  $\mathbb{R}_{+}^{*}$, we have
\begin{align*}
\Phi_{2}\left(  \Phi^{-1}_{1}\left(\dfrac{1}{y^{s}_{0}}\right) \frac{y_{0}^{2s/\rho}}{|\omega-\overline{z_{0}}|^{2s/\rho}}\right) &\leq \Phi_{2}\left(  \Phi^{-1}_{1}\left(\dfrac{1}{y^{s}_{0}}\right) \frac{1}{2^{2(j-1)s/\rho}}\right)\\
&=\Phi_{2}\left(\Phi^{-1}_{1}\left(\dfrac{1}{y^{s}_{0}}\right)\frac{1}{2^{(j+1)s/\rho}}\times \frac{1}{2^{js/\rho}}\times \frac{1}{2^{-3s/\rho}}\right)\\
&\leq \frac{1}{2^{-3sb_{\Phi_{2}}/\rho}} \times \frac{1}{2^{jsa_{\Phi_{2}}/\rho}} \times \Phi_{2} \circ \Phi_{1}^{-1}\left(\frac{1}{|I_{j}|^{s}} \right).
\end{align*}
We deduce that $$ \Phi_{2}\left(  \Phi^{-1}_{1}\left(\dfrac{1}{y^{s}_{0}}\right) \frac{y_{0}^{2s/\rho}}{|\omega-\overline{z_{0}}|^{2s/\rho}}\right) \leq \frac{1}{2^{-3sb_{\Phi_{2}}/\rho}} \times \frac{1}{2^{jsa_{\Phi_{2}}/\rho}} \times \Phi_{2} \circ \Phi_{1}^{-1}\left(\frac{1}{|I_{j}|^{s}} \right), ~~ \forall~ \omega \in E_{j}. $$ 
Since the inequality (\ref{eq:ual65iaq5rsleson}) holds, it follows that
 \begin{align*}
\int_{E_{j}}\Phi_{2}\left(  \Phi^{-1}_{1}\left(\dfrac{1}{y^{s}_{0}}\right) \frac{y_{0}^{2s/\rho}}{|\omega-\overline{z_{0}}|^{2s/\rho}}\right) d\mu(\omega) &\leq \int_{E_{j}} \frac{1}{2^{-3sb_{\Phi_{2}}/\rho}} \times \frac{1}{2^{jsa_{\Phi_{2}}/\rho}} \times \Phi_{2} \circ \Phi_{1}^{-1}\left(\frac{1}{|I_{j}|^{s}} \right)d\mu(\omega)\\
&\leq \frac{1}{2^{-3sb_{\Phi_{2}}/\rho}} \times \frac{1}{2^{jsa_{\Phi_{2}}/\rho}} \times\Phi_{2} \circ \Phi_{1}^{-1}\left(\frac{1}{|I_{j}|^{s}} \right) \mu(Q_{I_{j}}) \\
&\leq \frac{1}{2^{-3sb_{\Phi_{2}}/\rho}} \times \frac{1}{2^{jsa_{\Phi_{2}}/\rho}} \times C_{1}.
\end{align*}
We deduce that  $$ \int_{E_{j}}\Phi_{2}\left(  \Phi^{-1}_{1}\left(\dfrac{1}{y^{s}_{0}}\right) \frac{y_{0}^{2s/\rho}}{|\omega-\overline{z_{0}}|^{2s/\rho}}\right) d\mu(\omega) \leq \frac{C_{1}}{2^{-3sb_{\Phi_{2}}/\rho}} \times \frac{1}{2^{jsa_{\Phi_{2}}/\rho}}, ~~ \forall~j\geq 0.       $$
By construction, the $E_{j}$ are pairwise disjoint and form a partition of $\mathbb{C}_{+}$. So
we have
\begin{align*}
\int_{\mathbb{C}_{+}}\Phi_{2}\left(  \Phi^{-1}_{1}\left(\dfrac{1}{y^{s}_{0}}\right) \frac{y_{0}^{2s/\rho}}{|\omega-\overline{z_{0}}|^{2s/\rho}}\right) d\mu(\omega) &= \sum_{j=0}^{\infty}	\int_{E_{j}}\Phi_{2}\left(  \Phi^{-1}_{1}\left(\dfrac{1}{y^{s}_{0}}\right) \frac{y_{0}^{2s/\rho}}{|\omega-\overline{z_{0}}|^{2s/\rho}}\right)  d\mu(\omega)\\
&\leq \frac{C_{1}}{2^{-3sb_{\Phi_{2}}/\rho}} \times \sum_{j=0}^{\infty}\frac{1}{2^{jsa_{\Phi_{2}}/\rho}} < \infty.
\end{align*}
\epf

\section{Proofs of main results.}
\proof[Proof of Theorem \ref{pro:main127}.]
The equivalence $(i)\Leftrightarrow (ii)$ is given by Theorem \ref{pro:main14paq1}. The implication $(iii)\Rightarrow (iv)$ is obvious. Let us prove that $(i)\Rightarrow (iii)$ and $(iv)\Rightarrow (i)$ which is enough to conclude.
\medskip

$(i)\Rightarrow (iii)$:
Let $0\not\equiv F \in H^{\Phi_{1}}(\mathbb{C_{+}})$. 
According to Theorem \ref{pro:mainfaaqaqaqq5}, there exists a unique function
   $f\in L^{\Phi}\left(\mathbb{R}\right)$ such that $\log|f| \in L^{1}\left(\frac{dt}{1+t^{2}}\right)$ and
\begin{equation}\label{eq:ibejtion}
\log|F(x+iy)| \leq   \frac{1}{\pi}\int_{\mathbb{R}}\frac{y}{(x-t)^{2}+y^{2}}\log|f(t)|dt, ~~\forall~x+iy \in \mathbb{C}_{+}\end{equation}
and 
$  \|F\|_{H^{\Phi}}^{lux}=\|f\|_{L^{\Phi}}^{lux}$.
Using Jensen's inequality in Relation (\ref{eq:ibejtion}), we deduce that
$$  |F(x+iy)| \lesssim \left(\mathcal{M}_{HL}(|f|^{a_{\Phi_{1}}/2}) (x)\right)^{2/a_{\Phi_{1}}},  ~~\forall~x+iy \in \mathbb{C}_{+}.    $$
Fix  $\lambda >0$ and put 
$$  E_{\lambda}:=\left\{ x\in \mathbb{R} : \left(\mathcal{M}_{HL}\left(\frac{|f|}{\|f\|_{L^{\Phi}}^{lux}}\right)^{a_{\Phi_{1}}/2} (x)\right)^{2/a_{\Phi_{1}}} >  \lambda \right\}.   $$
From the Relation(\ref{eq:of895ua1l}),  we deduce that
$$   |E_{\lambda}| \lesssim \sum_{\beta \in \{0; 1/3\} }  \left|\left\{ x\in \mathbb{R} : \left(\mathcal{M}_{HL}^{\mathcal{D}^{\beta}}\left(\frac{|f|}{\|f\|_{L^{\Phi}}^{lux}}\right)^{a_{\Phi_{1}}/2} (x)\right)^{2/a_{\Phi_{1}}} >  \frac{\lambda}{12} \right\} \right|.  $$
Put $$  \Phi_{a}(t)=\Phi_{1}\left(t^{2/a_{\Phi_{1}}}\right), ~~\forall~t\geq 0.      $$
From Proposition \ref{pro:main3aaqq7}, we deduce that  $\Phi_{a} \in \mathscr{U}\cap \nabla_{2}$. 
According to Proposition \ref{pro:main9aaq9}, it follows that $$ \left|\left\{ x\in \mathbb{R} : \left(\mathcal{M}_{HL}^{\mathcal{D}^{\beta}}\left(\frac{|f|}{\|f\|_{L^{\Phi}}^{lux}}\right)^{a_{\Phi_{1}}/2} (x)\right)^{2/a_{\Phi_{1}}} >  \frac{\lambda}{12} \right\} \right| \lesssim  \frac{1}{\Phi_{1}(\lambda)}, ~~\forall~ \beta \in \{0; 1/3\}.    $$
We deduce that $$  |E_{\lambda}| \lesssim  \frac{1}{\Phi_{1}(\lambda)}. $$
Put  
$$ \widetilde{\Omega}_{3}(t)=\frac{1}{\Phi_{2}\circ \Phi_{1}^{-1}\left(\frac{1}{t}\right)}, ~~\forall~t>0 \hspace*{0.5cm}\textrm{and} \hspace*{0.5cm}  \widetilde{\Omega}_{3}(0)=0. $$
From Lemma \ref{pro:main40}, we deduce that   $\widetilde{\Omega}_{3} \in \mathscr{U}$.
Since  $\mu$ is an $ \Phi_{2}\circ \Phi_{1}^{-1}-$Carleson measure and  $t\mapsto\frac{\widetilde{\Omega}_{3}(t)}{t}$ is non-decreasing on $\mathbb{R_{+}^{*}}$,  by Lemma \ref{pro:main132},  we have
\begin{align*}
\mu\left(\left\{ z\in \mathbb{C_{+}} : |F(z)| >  \lambda \|f\|_{L^{\Phi_{1}}}^{lux}\right\}\right) &\lesssim \mu\left(\left\{ z\in \mathbb{C_{+}} : |U_{f}(z)| >  \lambda \|f\|_{L^{\Phi_{1}}}^{lux}\right\}\right)\\
&\lesssim  \widetilde{\Omega}_{3}\left(\left|E_{\lambda}  \right|\right) \\
&\lesssim \Phi_{1}(\lambda) \widetilde{\Omega}_{3}\left(\frac{1}{\Phi_{1}(\lambda)} \right) \left|E_{\lambda}  \right|.
\end{align*}
As 
$$ \Phi_{1}(\lambda) \widetilde{\Omega}_{3}\left(\frac{1}{\Phi_{1}(\lambda)} \right) = \Phi_{1}(\lambda)\frac{1}{\Phi_{2}(\lambda)} = \frac{\Phi_{1}(\lambda)}{\lambda} \times \frac{\lambda}{\Phi_{2}(\lambda)} \approx \frac{\Phi_{1}'(\lambda)}{\Phi_{2}'(\lambda)}. $$
We deduce that
$$ \mu\left(\left\{ z\in \mathbb{C_{+}} : |F(z)| >  \lambda \|f\|_{L^{\Phi_{1}}}^{lux}\right\}\right) \lesssim  \frac{\Phi_{1}'(\lambda)}{\Phi_{2}'(\lambda)}\left|E_{\lambda}  \right|,~~\forall~\lambda >0.   $$
We have
\begin{align*}
\int_{\mathbb{C}_{+}}\Phi_{2}\left( \frac{|F(z)|}{\|F\|_{H^{\Phi_{1}}}^{lux}}\right)d\mu(z) &= \int_{0}^{\infty}\Phi_{2}^{\prime}(\lambda)\mu\left(\left\{ z\in \mathbb{C_{+}} : |F(z)| >  \lambda \|f\|_{L^{\Phi_{1}}}^{lux}\right\}\right)d\lambda     \\
&\lesssim  \int_{0}^{\infty}\Phi_{2}^{\prime}(\lambda)\left(\dfrac{\Phi_{1}^{\prime}(\lambda)}{\Phi_{2}^{\prime}(\lambda)} \times |E_{\lambda}| \right)d\lambda \\
&= \int_{0}^{\infty}\Phi_{1}^{\prime}(\lambda)\times |E_{\lambda}|d\lambda 
= \int_{\mathbb{R}}\Phi_{a}\left(  \mathcal{M}_{HL}^{\mathcal{D}^{\beta}}\left(\frac{|f|}{\|f\|_{L^{\Phi}}^{lux}}\right)^{a_{\Phi_{1}}/2} (x)  \right)dx \\
&\lesssim  \int_{\mathbb{R}}\Phi_{1}\left( \frac{|f(x)|}{\|f\|_{L^{\Phi_{1}}}^{lux}}\right)dx 
\lesssim 1.
\end{align*}
$(iv)\Rightarrow (i)$:  Let $I$ be an interval of nonzero length and $Q_{I}$ its Carleson square.\\
Fix $z_{0}=x_{0}+iy_{0} \in \mathbb{C}_{+}$ and we assume that  $x_{0}$ is the center of $I$ and $|I|=2y_{0}$. Put 
$$  F_{z_{0}}(\omega)=\Phi^{-1}_{1}\left(\frac{1}{\pi y_{0}}\right) \frac{y^{2/\rho}_{0}}{ (\omega-\overline{z_{0}})^{2/\rho}} ,~ \forall~ \omega\in \mathbb{C_{+}},           $$
where $\rho=1$ if $\Phi \in \mathscr{U}$ and $\rho=a_{\Phi}$ if  $\Phi \in \mathscr{L}$. By Proposition \ref{pro:main10aqa9}, we deduce that
$F_{z_{0}} \in  H^{\Phi_{1}}(\mathbb{C_{+}})$ and $\|F_{z_{0}}\|_{H^{\Phi_{1}}}^{lux}\leq 1$.\\
Let $\omega=u+iv \in Q_{I}$. We have 
$$ |\omega-\overline{z_{0}}|^{2}=|(u-x_{0})+i(v+y_{0})|^{2} \leq y_{0}^{2}+(2y_{0}+y_{0})^{2}=10y_{0}^{2}\Rightarrow \frac{1}{10} \leq \dfrac{y^{2}_{0}}{ |\omega-\overline{z_{0}}|^{2}}.   $$
Since the function $t\mapsto\frac{\Phi_{1}^{-1}(t)}{t^{1/\rho}}$ is non-increasing on $\mathbb{R}_{+}^{*}$, we have 
$$  \Phi^{-1}_{1}\left(\frac{1}{|I|}\right)<  \Phi^{-1}_{1}\left(\frac{1}{y_{0}}\right) \leq \pi^{1/\rho}\Phi^{-1}_{1}\left(\frac{1}{\pi y_{0}}\right).  $$
We deduce that 
$$  \Phi^{-1}_{1}\left(\frac{1}{|I|}\right)<  \left(\frac{\pi}{10}\right)^{1/\rho} \Phi^{-1}_{1}\left(\frac{1}{\pi y_{0}}\right)\frac{y_{0}^{2/\rho}}{ |\omega-\overline{z_{0}}|^{2/\rho}} \leq \left(\frac{\pi}{10}\right)^{1/\rho} \frac{|F_{z_{0}}(\omega)|}{\|F_{z_{0}}\|_{H^{\Phi_{1}}}^{lux}}.
  $$
Taking   
$$  \lambda:=\left(\frac{10}{\pi}\right)^{1/\rho}\Phi_{1}^{-1}\left(\frac{1}{|I|}\right),   $$
it follows that
 $$ |F_{z_{0}}(\omega)|>\lambda \|F_{z_{0}}\|_{H^{\Phi_{1}}}^{lux}, ~~ \forall~\omega \in Q_{I}.      $$
Therefore
 $$  Q_{I} \subset \left\{ z\in \mathbb{C_{+}} : |F_{z_{0}}(z)|>\lambda \|F_{z_{0}}\|_{H^{\Phi_{1}}}^{lux}  \right\}.    $$
Since inequality (\ref{eq:iberginjectio2}) is satisfied, we have 
$$ \mu(Q_{I}) \leq \mu\left( \left\{ z\in \mathbb{C_{+}} : |F_{z_{0}}(z)|>\lambda \|F_{z_{0}}\|_{H^{\Phi_{1}}}^{lux}\right\}\right) \leq \frac{C_{1}}{\Phi_{2}(\lambda)}.    $$
As 
$$ \Phi_{2} \circ \Phi_{1}^{-1}\left(\frac{1}{|I|} \right)= \Phi_{2}\left( \left(\frac{\pi}{10}\right)^{1/\rho} \lambda  \right)  \leq C_{2} \Phi_{2}(\lambda). $$
We deduce that 
$$ \mu(Q_{I}) \leq \frac{C_{3}}{\Phi_{2} \circ \Phi_{1}^{-1}\left(\frac{1}{|I|} \right)}.      $$
\epf

\proof[Proof of Corollary \ref{pro:main149}.]
The proof of Corollary \ref{pro:main149} follows from Theorem \ref{pro:main127} and Proposition \ref{pro:main14a9} for ($s=1$).
\epf

\proof[Proof of Theorem \ref{pro:main150}.]
The equivalence $(i)\Leftrightarrow (ii)$ is given by Theorem \ref{pro:main14paq1}. The implication $(iii)\Rightarrow (iv)$ is obvious. To conclude, it is enough to prove that  $(i)\Rightarrow (iii)$ and $(iv)\Rightarrow (i)$.
\medskip

$(i)\Rightarrow (iii)$:
Let  $0\not\equiv F \in A_{\alpha}^{\Phi_{1}}(\mathbb{C_{+}})$. By Proposition \ref{pro:main1aq18}, there exists $\beta \in \left\{0, 1/3\right \}$ such that
$$ |G(z)| \lesssim \left(\mathcal{M}_{V_{\alpha}}^{\mathcal{D}^{\beta}}\left(|G|^{a_{\Phi_{1}}/2}\right)(z)\right)^{2/a_{\Phi_{1}}}, ~~\forall~z \in \mathbb{C_{+}},
       $$
where $G:=\frac{|F(z)|}{\|F\|_{A_{\alpha}^{\Phi_{1}}}^{lux}}$. Put  
$$ \widetilde{\Omega}_{3}(t)=\frac{1}{\Phi_{2}\circ \Phi_{1}^{-1}\left(\frac{1}{t}\right)}, ~~\forall~t>0 \hspace*{0.5cm}\textrm{and} \hspace*{0.5cm}  \widetilde{\Omega}_{3}(0)=0. $$
From Lemma \ref{pro:main40}, we deduce that   $\widetilde{\Omega}_{3} \in \mathscr{U}$.
Since  $t\mapsto\frac{\widetilde{\Omega}_{3}(t)}{t}$ is non-decreasing on $\mathbb{R_{+}^{*}}$, according to Proposition \ref{pro:main9aaq9}, for  $\lambda >0$,  we have
$$    \left|E_{\lambda}  \right|_{\alpha} \leq \frac{1}{\Phi_{1}(\lambda)}  \Rightarrow  \widetilde{\Omega}_{3}\left(\left|E_{\lambda}  \right|_{\alpha}\right) \leq \Phi_{1}(\lambda) \widetilde{\Omega}_{3}\left(\frac{1}{\Phi_{1}(\lambda)} \right) \left|E_{\lambda}  \right|_{\alpha} \lesssim  \frac{\Phi_{1}'(\lambda)}{\Phi_{2}'(\lambda)}\left|E_{\lambda}  \right|_{\alpha}, $$
where 
$$  E_{\lambda}:=  \left\{z\in \mathbb{C}_{+} :  \left(\mathcal{M}_{V_{\alpha}}^{\mathcal{D}^{\beta}}\left(|G|^{a_{\Phi_{1}}/2}\right)(z)\right)^{2/a_{\Phi_{1}}} > \lambda \right\}.  $$
Since $\mu$ is an $(\alpha, \Phi_{2}\circ \Phi_{1}^{-1})-$Carleson measure, by Lemma \ref{pro:main132},  we deduce that $$ \mu(E_{\lambda}) \lesssim \widetilde{\Omega}_{3}\left(\left|E_{\lambda}  \right|_{\alpha}\right) \lesssim  \frac{\Phi_{1}'(\lambda)}{\Phi_{2}'(\lambda)}\left|E_{\lambda}  \right|_{\alpha},~~\forall~\lambda >0.   $$
Put $$  \Phi_{a}(t)=\Phi_{1}\left(t^{2/a_{\Phi_{1}}}\right), ~~\forall~t\geq 0.      $$
From Proposition \ref{pro:main3aaqq7}, we deduce that  $\Phi_{a} \in \mathscr{U}\cap \nabla_{2}$. We have
\begin{align*}
\int_{\mathbb{C}_{+}}\Phi_{2}\left( \frac{|F(z)|}{\|F\|_{A_{\alpha}^{\Phi_{1}}}^{lux}}\right)d\mu(z) 
 &\lesssim  \int_{\mathbb{C}_{+}}\Phi_{2}\left(\left(\mathcal{M}_{V_{\alpha}}^{\mathcal{D}^{\beta}}\left(|G|^{a_{\Phi_{1}}/2}\right)(z)\right)^{2/a_{\Phi_{1}}} \right)d\mu(z) \\
&= \int_{0}^{\infty}\Phi_{2}^{\prime}(\lambda)\mu(E_{\lambda}) d\lambda 
 \\
&\lesssim \int_{0}^{\infty}\Phi_{2}^{\prime}(\lambda)\left(\frac{\Phi_{1}'(\lambda)}{\Phi_{2}'(\lambda)}\left|E_{\lambda}  \right|_{\alpha}\right)d\lambda 
 \\
&= \int_{\mathbb{C_{+}}}\Phi_{a}\left(\mathcal{M}_{V_{\alpha}}^{\mathcal{D}^{\beta}}\left(|G|^{a_{\Phi_{1}}/2}\right)(z)\right)dV_{\alpha}(z)\\
&\lesssim \int_{\mathbb{C_{+}}}\Phi_{a}\left(|G|^{a_{\Phi_{1}}/2} \right)dV_{\alpha}(z) 
\lesssim 1.
\end{align*}
\medskip

$(iv)\Rightarrow (i)$:
Let $I$ be an interval of nonzero length and $Q_{I}$ its Carleson square.\\
Fix $z_{0}=x_{0}+iy_{0} \in \mathbb{C}_{+}$ and we assume that  $x_{0}$ is the center of $I$ and $|I|=2y_{0}$. Put 
$$  G_{z_{0}}(\omega)=\Phi_{1}^{-1}\left(\frac{1}{C_{\alpha}y_{0}^{2+\alpha}}\right) \frac{y_{0}^{(4+2\alpha)/\rho}}{ (\omega-\overline{z_{0}})^{(4+2\alpha)/\rho}},~ \forall~ \omega\in \mathbb{C_{+}},    $$
where $\rho=1$ if $\Phi \in \mathscr{U}$ and $\rho=a_{\Phi}$ if  $\Phi \in \mathscr{L}$, and $C_{\alpha}$ is the constant in the Relation (\ref{eq:fo4naqm1l}).
From the Proposition \ref{pro:main10aqa9}, we deduce that
$G_{z_{0}} \in  A_{\alpha}^{\Phi_{1}}(\mathbb{C_{+}})$ and $\|G_{z_{0}}\|_{A_{\alpha}^{\Phi_{1}}}^{lux}\leq 1$.\\
For $\omega=u+iv \in Q_{I}$, we have 
$$ |\omega-\overline{z_{0}}|^{2}=|(u-x_{0})+i(v+y_{0})|^{2} \leq y_{0}^{2}+(2y_{0}+y_{0})^{2}=10y_{0}^{2}\Rightarrow \frac{1}{10} \leq \dfrac{y^{2}_{0}}{ |\omega-\overline{z_{0}}|^{2}}.   $$
Since the function $t\mapsto\frac{\Phi_{1}^{-1}(t)}{t^{1/\rho}}$ is non-increasing on $\mathbb{R}_{+}^{*}$, we have 
$$  \Phi^{-1}_{1}\left(\frac{1}{|I|^{2+\alpha}}\right)<  \Phi^{-1}_{1}\left(\frac{1}{y_{0}^{2+\alpha}}\right) \leq (C_{\alpha})^{1/\rho}\Phi^{-1}_{1}\left(\frac{1}{C_{\alpha}y_{0}^{2+\alpha}}\right).  $$
We deduce that 
$$  \Phi^{-1}_{1}\left(\frac{1}{|I|^{2+\alpha}}\right)<  \left(\frac{C_{\alpha}}{10}\right)^{1/\rho} \Phi^{-1}_{1}\left(\frac{1}{C_{\alpha}y_{0}^{2+\alpha}}\right)\frac{y_{0}^{(4+2\alpha)/\rho}}{ |\omega-\overline{z_{0}}|^{(4+2\alpha)/\rho}} \leq \left(\frac{C_{\alpha}}{10}\right)^{1/\rho} \frac{|G_{z_{0}}(\omega)|}{\|G_{z_{0}}\|_{A_{\alpha}^{\Phi_{1}}}^{lux}}.
  $$
Taking   
$$  \lambda:=\left(\frac{10}{C_{\alpha}}\right)^{1/\rho}\Phi_{1}^{-1}\left(\frac{1}{|I|^{2+\alpha}}\right),   $$
it follows that
 $$ |G_{z_{0}}(\omega)|>\lambda \|G_{z_{0}}\|_{A_{\alpha}^{\Phi_{1}}}^{lux}, ~~ \forall~\omega \in Q_{I}.      $$
Therefore
 $$  Q_{I} \subset \left\{ z\in \mathbb{C_{+}} : |G_{z_{0}}(z)|>\lambda \|G_{z_{0}}\|_{A_{\alpha}^{\Phi_{1}}}^{lux}  \right\}.    $$
Since inequality (\ref{eq:ibeectio2}) is satisfied, we have 
$$ \mu(Q_{I}) \leq \mu\left( \left\{ z\in \mathbb{C_{+}} : |G_{z_{0}}(z)|>\lambda \|G_{z_{0}}\|_{A_{\alpha}^{\Phi_{1}}}^{lux}\right\}\right) \leq \frac{C_{1}}{\Phi_{2}(\lambda)}.    $$
As 
$$ \Phi_{2} \circ \Phi_{1}^{-1}\left(\frac{1}{|I|^{2+\alpha}} \right)= \Phi_{2}\left( \left(\frac{C_{\alpha}}{10}\right)^{1/\rho} \lambda  \right)  \leq C_{2} \Phi_{2}(\lambda). $$
We deduce that 
$$ \mu(Q_{I}) \leq \frac{C_{3}}{\Phi_{2} \circ \Phi_{1}^{-1}\left(\frac{1}{|I|^{2+\alpha}} \right)}.      $$
\epf

\proof[Proof of Corollary \ref{pro:main151}.]
The proof of Corollary \ref{pro:main151} follows from Theorem \ref{pro:main150} and Proposition \ref{pro:main14a9} for ($s=2+\alpha$).
\epf

The following result follows from the Lemma \ref{pro:main118} and the Proposition \ref{pro:main10aqa9}. Therefore, the proof will not be written.

\begin{lemme}\label{pro:main160}
Let $\alpha,\beta> -1$,   $\Phi_{1},\Phi_{2} \in \mathscr{L} \cup \mathscr{U}$. There are constants $C_{1}:=C_{\alpha,\Phi_{1},\Phi_{2}}>0$ and $C:=C_{\alpha,\beta,\Phi_{1}, \Phi_{2}}>0$ such that for all  $ F \in \mathcal{M}\left( H^{\Phi_{1}}(\mathbb{C_{+}}) ,  A^{\Phi_{2}}_{\alpha}(\mathbb{C_{+}})\right)$ and $ G \in \mathcal{M}\left( A_{\alpha}^{\Phi_{1}}(\mathbb{C_{+}}) ,  A^{\Phi_{2}}_{\beta}(\mathbb{C_{+}})\right)$, 
\begin{equation}\label{eq:ibergtion}
|F(x+iy)| \leq C_{1} \frac{\Phi_{2}^{-1}\left(\frac{1}{y^{2+\alpha}}\right)}{\Phi_{1}^{-1}\left(\frac{1}{y}\right)},~~ \forall~x+iy \in \mathbb{C_{+}} \end{equation}	
and 
\begin{equation}\label{eq:iberaqgt12}
|G(x+iy)| \leq C_{2} \dfrac{\Phi_{2}^{-1}\left(\frac{1}{y^{2+\beta}}\right)}{\Phi_{1}^{-1}\left(\frac{1}{y^{2+\alpha}}\right)}, ~~ \forall~x+iy \in \mathbb{C_{+}}.\end{equation}
\end{lemme}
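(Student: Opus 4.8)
The plan is to combine the extremal test functions of Proposition \ref{pro:main10aqa9} with the pointwise growth estimate of Lemma \ref{pro:main118}, the point being that a multiplier times such a test function lands in the target Bergman-Orlicz space with controlled norm, so that evaluating this product at the very point defining the test function turns the multiplier bound into the claimed pointwise estimate.

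For (\ref{eq:ibergtion}), I would fix $z=x+iy\in\mathbb{C}_{+}$ and apply Proposition \ref{pro:main10aqa9} to $\Phi_{1}$ (legitimate since $\Phi_{1}\in\mathscr{L}\cup\mathscr{U}$ is one-to-one and convex or in $\mathscr{L}$) to obtain $\rho\in\{1;a_{\Phi_{1}}\}$ and the analytic function
$$ F_{z}(\omega)=\Phi_{1}^{-1}\left(\frac{1}{\pi y}\right)\frac{y^{2/\rho}}{(\omega-\overline{z})^{2/\rho}} $$
with $F_{z}\in H^{\Phi_{1}}(\mathbb{C}_{+})$ and $\|F_{z}\|_{H^{\Phi_{1}}}^{lux}\leq 1$. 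Since $z-\overline{z}=2iy$, evaluation at $\omega=z$ gives $|F_{z}(z)|=2^{-2/\rho}\Phi_{1}^{-1}(1/(\pi y))$, and because $0<a_{\Phi_{1}^{-1}}\leq b_{\Phi_{1}^{-1}}<\infty$ by Proposition \ref{pro:main4pm4}, the constant $\pi$ is absorbed, so $|F_{z}(z)|\gtrsim\Phi_{1}^{-1}(1/y)$. As $F$ is a multiplier, $F\cdot F_{z}\in A_{\alpha}^{\Phi_{2}}(\mathbb{C}_{+})$ with $\|FF_{z}\|_{A_{\alpha}^{\Phi_{2}}}^{lux}\lesssim\|F_{z}\|_{H^{\Phi_{1}}}^{lux}\leq 1$, so Lemma \ref{pro:main118} applied to $FF_{z}$ at the point $z$ yields
$$ |F(z)|\,|F_{z}(z)|=|(FF_{z})(z)|\lesssim\Phi_{2}^{-1}\left(\frac{1}{y^{2+\alpha}}\right). $$
Dividing by $|F_{z}(z)|\gtrsim\Phi_{1}^{-1}(1/y)$ then gives (\ref{eq:ibergtion}).

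The second inequality (\ref{eq:iberaqgt12}) would follow by the same scheme, using instead the test function $G_{z}$ of Proposition \ref{pro:main10aqa9} attached to $\Phi_{1}$ and the weight $\alpha$, which belongs to $A_{\alpha}^{\Phi_{1}}(\mathbb{C}_{+})$ with $\|G_{z}\|_{A_{\alpha}^{\Phi_{1}}}^{lux}\leq 1$ and satisfies $|G_{z}(z)|=2^{-(4+2\alpha)/\rho}\Phi_{1}^{-1}(1/(C_{\alpha}y^{2+\alpha}))\gtrsim\Phi_{1}^{-1}(1/y^{2+\alpha})$. Since $G$ is a multiplier, $GG_{z}\in A_{\beta}^{\Phi_{2}}$ with bounded Luxemburg norm, and Lemma \ref{pro:main118} with the weight $\beta$ gives $|G(z)|\,|G_{z}(z)|\lesssim\Phi_{2}^{-1}(1/y^{2+\beta})$; dividing produces (\ref{eq:iberaqgt12}).

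I expect the only delicate points to be bookkeeping: verifying that evaluating each extremal function at its own defining point produces the sharp lower bounds $|F_{z}(z)|\approx\Phi_{1}^{-1}(1/y)$ and $|G_{z}(z)|\approx\Phi_{1}^{-1}(1/y^{2+\alpha})$ --- which rests on the finiteness of the indices of $\Phi_{1}^{-1}$ so that the constants $\pi$ and $C_{\alpha}$ are harmless --- and checking that the multiplier inequality (\ref{eq:ibetide12}) transfers to the Luxemburg norms of the two Orlicz spaces. Both are routine once the test functions are available, so no real obstacle remains.
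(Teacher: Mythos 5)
Your proposal is correct and is precisely the argument the paper intends: the authors omit the proof, stating only that it "follows from Lemma \ref{pro:main118} and Proposition \ref{pro:main10aqa9}", and your scheme of multiplying the test functions $F_{z}$, $G_{z}$ by the multiplier, applying the pointwise Bergman--Orlicz estimate to the product at $\omega=z$, and dividing by $|F_{z}(z)|\gtrsim\Phi_{1}^{-1}(1/y)$ (resp. $|G_{z}(z)|\gtrsim\Phi_{1}^{-1}(1/y^{2+\alpha})$) is exactly that route. The bookkeeping you flag (absorbing $\pi$ and $C_{\alpha}$ via the monotonicity of $t\mapsto\Phi_{1}^{-1}(t)/t^{1/\rho}$, and transferring the multiplier bound to Luxemburg norms) is handled by facts already established in the paper, so nothing is missing.
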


\proof[Proof of Theorem \ref{pro:main16qs13}.]
The inclusion $\mathcal{M}( H^{\Phi_{1}}(\mathbb{C_{+}}) ,  A^{\Phi_{2}}_{\alpha}(\mathbb{C_{+}}))$ in $H_{\omega}^{\infty}(\mathbb{C_{+}})$ follows from Lemma \ref{pro:main160}.\\
Conversely, \\
Fix $ 0\not \equiv G \in H_{\omega}^{\infty}(\mathbb{C_{+}})$ and let $z=x+iy \in \mathbb{C_{+}}$.  
Since $\Phi_{2}\in \widetilde{\mathscr{L}}\cup\widetilde{\mathscr{U}}$, by Lemma \ref{pro:main80jdf}, we have
$$ \Phi_{2}(\omega(y)) =  \Phi_{2}\left( \frac{\Phi_{2}^{-1}\left(\frac{1}{y^{2+\alpha}}\right)}{\Phi_{1}^{-1}\left(\frac{1}{y}\right)}\right) \lesssim \frac{\Phi_{2}\left(\Phi_{2}^{-1}\left(\frac{1}{y^{2+\alpha}}\right)\right)}{\Phi_{2}\left(\Phi_{1}^{-1}\left(\frac{1}{y}\right)\right)} = \frac{1}{y^{2+\alpha}\Phi_{2} \circ \Phi_{1}^{-1}\left(\frac{1}{y}\right)}.      $$
We deduce that 
$$  \Phi_{2}\left(\frac{|G(x+iy)|}{\|G\|_{H_{\omega}^{\infty}}} \right)  \lesssim \Phi_{2}(\omega(y))\lesssim\frac{1}{y^{2+\alpha}\Phi_{2} \circ \Phi_{1}^{-1}\left(\frac{1}{y}\right)},~~ \forall~ x+iy \in \mathbb{C_{+}}.      $$
Put 
$$ d\mu(x+iy)=\dfrac{dx dy}{y^{2}\Phi_{2} \circ \Phi_{1}^{-1}(\frac{1}{y})},  ~~ \forall~ x+iy \in \mathbb{C_{+}}. $$
Since $\Phi_{2} \circ \Phi_{1}^{-1} \in \nabla_2$, from  Proposition \ref{pro:main49qa2}, we deduce that $\mu$ is a measure $\Phi_{2}\circ \Phi_{1}^{-1}-$Carleson.\\
Let $0\not\equiv F \in H^{\Phi_{1}}(\mathbb{C_{+}})$. By the Theorem \ref{pro:main127}, we have
\begin{align*}
\int_{\mathbb{C_{+}}}\Phi_{2}\left( \frac{|G(x+iy)F(x+iy)|}{\|G\|_{H_{\omega}^{\infty}}\|F\|_{H^{\Phi_{1}}} ^{lux}}\right) dV_{\alpha}(x+iy) 
 &\lesssim \int_{\mathbb{C_{+}}}\Phi_{2}\left(\frac{|G(x+iy)|}{\|G\|_{H_{\omega}^{\infty}}} \right) \Phi_{2}\left(\frac{|F(x+iy)|}{\|F\|_{H^{\Phi_{1}}} ^{lux}}\right) y^{\alpha}dxdy \\
&\lesssim \int_{\mathbb{C_{+}}} \Phi_{2}\left(\frac{|F(x+iy)|}{\|F\|_{H^{\Phi_{1}}} ^{lux}}\right) d\mu(x+iy)\\
&\lesssim 1. 
\end{align*}
We deduce that $G \in \mathcal{M}( H^{\Phi_{1}}(\mathbb{C_{+}}) ,  A^{\Phi_{2}}_{\alpha}(\mathbb{C_{+}}))$.
 \epf

\proof[Proof of Theorem \ref{pro:main1sr606}.]
 The inclusion $\mathcal{M}( A_{\alpha}^{\Phi_{1}}(\mathbb{C_{+}}) ,  A^{\Phi_{2}}_{\beta}(\mathbb{C_{+}}))$ in $H_{\omega}^{\infty}(\mathbb{C_{+}})$ follows from Lemma \ref{pro:main160}.\\
Conversely, \\
Fix $ 0\not \equiv G \in H_{\omega}^{\infty}(\mathbb{C_{+}})$ and let $z=x+iy \in \mathbb{C_{+}}$.  
Since $\Phi_{2}\in \widetilde{\mathscr{L}}\cup\widetilde{\mathscr{U}}$, by Lemma \ref{pro:main80jdf}, we have
$$ \Phi_{2}(\omega(y)) =  \Phi_{2}\left( \frac{\Phi_{2}^{-1}\left(\frac{1}{y^{2+\beta}}\right)}{\Phi_{1}^{-1}\left(\frac{1}{y^{2+\alpha}}\right)}\right) \lesssim \frac{\Phi_{2}\left(\Phi_{2}^{-1}\left(\frac{1}{y^{2+\beta}}\right)\right)}{\Phi_{2}\left(\Phi_{1}^{-1}\left(\frac{1}{y^{2+\alpha}}\right)\right)} = \frac{1}{y^{2+\beta}\Phi_{2} \circ \Phi_{1}^{-1}\left(\frac{1}{y^{2+\alpha}}\right)}.      $$
We deduce that
$$ \Phi_{2}\left( \frac{|G(x+iy)|}{\|G\|_{H_{\omega}^{\infty}}}\right) \lesssim \Phi_{2}(\omega(y))\lesssim \dfrac{1}{y^{2+\beta}\Phi_{2} \circ \Phi_{1}^{-1}\left(\frac{1}{y^{2+\alpha}}\right)}, ~~ \forall~ x+iy \in \mathbb{C_{+}}.         $$
Put
$$ d\mu(x+iy)=\dfrac{dx dy}{y^{2}\Phi_{2} \circ \Phi_{1}^{-1}(\frac{1}{y^{2+\alpha}})},  ~~ \forall~ x+iy \in \mathbb{C_{+}}. $$
By Proposition \ref{pro:main49qa2}, $\mu$ is a  $(\alpha,\Phi_{2}\circ \Phi_{1}^{-1})-$Carleson measure.
By the Theorem \ref{pro:main150}, we have
\begin{align*}
 \int_{\mathbb{C_{+}}}\Phi_{2}\left( \frac{|G(x+iy)F(x+iy)|}{\|G\|_{H_{\omega}^{\infty}}\|F\|_{A_{\alpha}^{\Phi_{1}}} ^{lux}}\right) dV_{\beta}(x+iy) 
&\lesssim \int_{\mathbb{C_{+}}}\Phi_{2}\left( \frac{|G(x+iy)|}{\|G\|_{H_{\omega}^{\infty}}}\right) \Phi_{2}\left(\frac{|F(x+iy)|}{\|F\|_{A_{\alpha}^{\Phi_{1}}} ^{lux}}\right) y^{\beta}dxdy\\
&\lesssim \int_{\mathbb{C_{+}}} \Phi_{2}\left(\frac{|F(x+iy)|}{\|F\|_{A_{\alpha}^{\Phi_{1}}} ^{lux}}\right) d\mu(x+iy)\\
&\lesssim 1.
\end{align*}
We deduce that  $G \in \mathcal{M}( A_{\alpha}^{\Phi_{1}}(\mathbb{C_{+}}) ,  A^{\Phi_{2}}_{\beta}(\mathbb{C_{+}})).$
\epf

\bibliographystyle{plain}
 
\end{document}